\numberwithin{equation}{subsection}
\newcommand{\sqsp}{\renewcommand{\baselinestretch}{1.1}\tiny\normalsize}
\newtheorem{theorem}[subsection]{Theorem}
\newtheorem{lemma}[subsection]{Lemma}
\newtheorem{corollary}[subsection]{Corollary}
\theoremstyle{definition}
\newtheorem{definition}[subsection]{Definition}
\newtheorem{example}[subsection]{Example}
\newtheorem{remark}[subsection]{Remark}
\newcommand{\bC}{\mathbf{C}}
\newcommand{\bk}{\mathbf{k}}
\newcommand{\bZ}{\mathbf{Z}}
\def\mualphaA{\mu_\alpha^A}
\def\mualphaH{\mu_\alpha^H}
\def\qp{\mathbb{A}^{2|0}_q}
\def\qpalpha{\mathbb{A}^{2|0}_{q,\alpha}}
\def\qf{\mathbb{A}^{0|2}_q}
\def\qfalpha{\mathbb{A}^{0|2}_{q,\alpha}}
\def\qm{\mathbb{A}^{1|1}_q}
\def\qmalpha{\mathbb{A}^{1|1}_{q,\alpha}}
\def\qmtwisted{\mathbb{A}^{1|1}_{-q^{-1}}}
\def\kg{\bk G}
\def\mq{M_q(2)}
\def\mqalpha{\mq_\alpha}
\def\mpq{M_{p,q}(2)}
\def\mqns{M_q(1|1)}
\def\mqnsalpha{\mqns_\alpha}
\def\gl{GL_q(2)}
\def\glalpha{\gl_\alpha}
\def\slq{SL_q(2)}
\def\slqalpha{\slq_\alpha}
\def\uq{U_q(sl_2)}
\def\smalluq{u_q(sl_2)}
\def\uqr{U_q^{(r)}(sl_2)}
\def\uqralpha{\uqr_\alpha}
\def\an{A^{(n)}}
\def\aone{A^{(1)}}
\def\agamma{A(\gamma)}
\def\agammaalpha{\agamma_\alpha}
\def\detq{{\det_q\nolimits}}
\def\rn{R^{\alpha^n}}
\def\rt{R^{\alpha^t}}
\def\ralpha{R^{\alpha}}
\def\abcd{{\begin{pmatrix}a & b\\c & d\end{pmatrix}}}
\def\abcdlambda{{\begin{pmatrix}a & \lambda b\\ \lambda^{-1}c & d\end{pmatrix}}}
\def\Tmatrix{{\begin{pmatrix}T_1^1 & T_1^2\\T_2^1 & T_2^2\end{pmatrix}}}
\def\Tmatrixlambda{{\begin{pmatrix}T_1^1 & \lambda T_1^2\\ \lambda^{-1}T_2^1 & T_2^2\end{pmatrix}}}
\def\mqr{{\begin{pmatrix}
R(a \otimes a) & R(a \otimes b) & R(b \otimes a) & R(b \otimes b)\\
R(a \otimes c) & R(c \otimes b) & R(d \otimes a) & R(b \otimes d)\\
R(c \otimes a) & R(a \otimes d) & R(b \otimes c) & R(d \otimes b)\\
R(c \otimes c) & R(c \otimes d) & R(d \otimes c) & R(d \otimes d)
\end{pmatrix}}}
\def\mr{{\begin{pmatrix}q & 0 & 0 & 0\\
0 & 0 & 1 & 0\\
0 &  1 & q - q^{-1} & 0\\
0 & 0 & 0 & q\\
\end{pmatrix}}}
\def\mpqr{{\begin{pmatrix}q & 0 & 0 & 0\\
0 & 0 & 1 & 0\\
0 & qp^{-1} & q - p^{-1} & 0\\
0 & 0 & 0 & q\\
\end{pmatrix}}}
\def\mqnsmatrix{{\begin{pmatrix}q & 0 & 0 & 0\\
0 & 0 & 1 & 0\\
0 & 1 & q - q^{-1} & 0\\
0 & 0 & 0 & -q^{-1}\\
\end{pmatrix}}}
\def\xymat{{\begin{pmatrix}x\\y\end{pmatrix}}}
\def\alphaxy{{\begin{pmatrix}\alpha(x)\\ \alpha(y)\end{pmatrix}}}
\def\twistedxy{{\begin{pmatrix}\xi x\\ \lambda^{-1}\xi y\end{pmatrix}}}
\DeclareMathOperator{\Hom}{Hom}
\begin{document}

\title[Cobraided Hom-bialgebras and Hom-quantum geometry]{Hom-quantum groups II: cobraided Hom-bialgebras and Hom-quantum geometry}
\author{Donald Yau}

\begin{abstract}
A class of non-associative and non-coassociative generalizations of cobraided bialgebras, called cobraided Hom-bialgebras, is introduced.  The non-(co)associativity in a cobraided Hom-bialgebra is controlled by a twisting map.  Several methods for constructing cobraided Hom-bialgebras are given.  In particular, Hom-type generalizations of FRT quantum groups, including quantum matrices and related quantum groups, are obtained.  Each cobraided Hom-bialgebra comes with solutions of the operator quantum Hom-Yang-Baxter equations, which are twisted analogues of the operator form of the quantum Yang-Baxter equation.  Solutions of the Hom-Yang-Baxter equation can be obtained from comodules of suitable cobraided Hom-bialgebras.  Hom-type generalizations of the usual quantum matrices coactions on the quantum planes give rise to non-associative and non-coassociative analogues of quantum geometry.
\end{abstract}

\keywords{The Yang-Baxter equation, cobraided bialgebra, quantum group, quantum geometry, Hom-bialgebra.}

\subjclass[2000]{16W30, 17A30, 17B37, 17B62, 81R50}

\address{Department of Mathematics\\
    The Ohio State University at Newark\\
    1179 University Drive\\
    Newark, OH 43055, USA}
\email{dyau@math.ohio-state.edu}

\date{\today}
\maketitle

\sqsp

\section{Introduction}

This paper is part of an on-going effort \cite{yau5,yau6,yau7,yau8} to study twisted, Hom-type generalizations of the various Yang-Baxter equations and related algebraic structures, including Lie bialgebras and quantum groups.  A Hom-type generalization of the Yang-Baxter equation (YBE)  \cite{baxter,baxter2,yang}, called the Hom-Yang-Baxter equation (HYBE), and its relationships to the braid relations and braid group representations \cite{artin2,artin} were studied in \cite{yau5,yau6}.  Hom versions of the classical Yang-Baxter equation \cite{skl1,skl2} and of Drinfel'd's Lie bialgebras \cite{dri83,dri87} were studied in \cite{yau7}.

Hom-type generalizations of Drinfel'd's quasi-triangular (a.k.a.\ braided) bialgebras \cite{dri87}, called quasi-triangular Hom-bialgebras, and of the quantum Yang-Baxter equation (QYBE) \cite{dri85}, called the quantum Hom-Yang-Baxter equations (QHYBEs), were studied in \cite{yau8}.  To simplify the terminology, in this paper we refer to a quasi-triangular Hom-bialgebra as a \emph{braided Hom-bialgebra}.  We use the name \emph{Hom-quantum groups} colloquially to refer to Hom-type generalizations of quantum groups.  Since braided bialgebras are examples of quantum groups, braided Hom-bialgebras form a class of Hom-quantum groups.  As discussed in \cite{yau8}, examples of braided Hom-bialgebras include Hom versions of Drinfel'd's quantum enveloping algebras \cite{dri87}.  These Hom-quantum enveloping algebras are non-associative, non-coassociative, non-commutative, and non-cocommutative.


The main purpose of this paper is to study another class of Hom-quantum groups, called cobraided Hom-bialgebras, generalizing cobraided bialgebras \cite{hay,lt,majid91,sch}.  In the literature, a cobraided bialgebra is also called a dual quasi-triangular bialgebra and a co-quasi-triangular bialgebra.  Cobraided bialgebras are important because, among other properties, they generate lots of solutions of the YBE via their comodules.  Moreover, some cobraided bialgebras have interesting coactions on quantum spaces.  Both of these properties of cobraided bialgebras are generalized to the Hom setting in this paper.  As we discuss below, suitable cobraided Hom-bialgebras generate lots of solutions of the HYBE \cite{yau5,yau6} via their comodules.  Moreover, Hom versions of certain quantum group coactions on the quantum planes lead to \emph{Hom-quantum geometry}, a non-associative and non-coassociative generalization of quantum geometry.

A distinct feature of (co)braided Hom-bialgebras is that they are, in general, non-associative, non-coassociative, non-commutative, and non-cocommutative.  The non-(co)associativity is controlled by a twisting map $\alpha$.  A cobraided bialgebra is an example of a cobraided Hom-bialgebra with $\alpha$ being the identity map.  We will discuss several general methods for constructing classes of cobraided Hom-bialgebras.

Before we describe the results in this paper, let us first recall some basic ideas about Hom-type objects.  Roughly speaking, a Hom-type structure arises when one strategically replaces the identity map in the defining axioms of a classical structure by a general twisting map $\alpha$.  For example, in a Hom-associative algebra \cite{ms}, the multiplication satisfies the Hom-associativity axiom: $\alpha(x)(yz) = (xy)\alpha(z)$.  Similarly, in a Hom-Lie algebra, the bracket satisfies the Hom-Jacobi identity: $[[x,y],\alpha(z)] + [[z,x],\alpha(y)] + [[y,z],\alpha(x)] = 0$.  Hom-Lie algebras were introduced in \cite{hls} to describe the structures on some $q$-deformations of the Witt and the Virasoro algebras.  Earlier precursors of Hom-Lie algebras can be found in \cite{hu,liu}.  Hom-Lie algebras are closely related to deformed vector fields \cite{ama,hls,ls,ls2,ls3,rs,ss} and number theory \cite{larsson}.  Other papers concerning Hom-associative algebras, Hom-Lie algebras, and related Hom-type structures are \cite{ams,cg,fg,fg2,gohr,jl,ms2,ms3,ms4,yau,yau2,yau3,yau4,yau5,yau6,yau7,yau8}.


We now describe the main results of this paper.  Precise definitions, statements of results, and proofs are given in later sections.

A cobraided bialgebra $(A,R)$ \cite{hay,lt,majid91,sch} consists of a bialgebra $A$ and a bilinear form $R \in \Hom(A^{\otimes 2},\bk)$, called the universal $R$-form or the cobraiding form, satisfying three axioms.  In section ~\ref{sec:dqt} we generalize cobraided bialgebras to cobraided Hom-bialgebras (Definition ~\ref{def:dqt}).  The cobraiding form $R$ in a cobraided bialgebra satisfies the operator quantum Yang-Baxter equation (OQYBE) \eqref{oqybe}.  Likewise, we show that in a cobraided Hom-bialgebra, the bilinear form $R$ satisfies \emph{two} Hom-type generalizations of the OQYBE, called the operator quantum Hom-Yang-Baxter equations (OQHYBEs) (Theorem ~\ref{thm:oqhybe}).  The OQHYBEs are the operator forms of the QHYBEs studied in \cite{yau8}.  Some alternative characterizations of the axioms of a cobraided Hom-bialgebra are given at the end of section ~\ref{sec:dqt} (Theorems ~\ref{thm:axiom1} and \ref{thm:axiom2}).

In sections ~\ref{sec:twist} - \ref{sec:duality} we give some general procedures by which cobraided Hom-bialgebras can be constructed and provide specific examples of cobraided Hom-bialgebras.  The first such procedure yields a family (usually infinite) of cobraided Hom-bialgebras $A_\alpha$ from each cobraided bialgebra $A$, where $\alpha$ runs through the bialgebra endomorphisms on $A$ (Theorem ~\ref{thm:twist}).  The cobraided Hom-bialgebra $A_\alpha$ is obtained from $A$ by twisting its (co)multiplication along the map $\alpha$, keeping the bilinear form $R$ unchanged.  A twisting procedure along these lines was first used in \cite{yau2} to construct examples of Hom-associative and Hom-Lie algebras (Remark ~\ref{rk2:twist}).  As examples, we apply this twisting procedure to the quantum group $\mq$ of quantum matrices (Example ~\ref{ex:mq}), the quantum general linear group $\gl$ (Example ~\ref{ex:gl}), the quantum special linear group $\slq$ (Example ~\ref{ex:sl}), the $2$-parameter quantum group $\mpq$ (Example ~\ref{ex:mpq}), and the non-standard quantum group $\mqns$ (Example ~\ref{ex:mq'}).  Since these quantum groups are non-commutative and non-cocommutative, the cobraided Hom-bialgebras obtained by twisting their (co)multiplications are simultaneously non-associative, non-coassociative, non-commutative, and non-cocommutative.

One widely used method for constructing quantum groups is the so-called FRT construction \cite{rft}.  Given an $R$-matrix (i.e., a solution of the YBE \eqref{ybe}) $\gamma$ on a finite dimensional vector space $V$, the FRT construction yields a cobraided bialgebra $A(\gamma)$, called an FRT quantum group (Definition ~\ref{def:frt}), together with a non-trivial $A(\gamma)$-comodule structure on $V$ \eqref{frtcomodule}.  We show in section ~\ref{sec:frt} that the first twisting procedure (Theorem ~\ref{thm:twist}) can be applied naturally to FRT quantum groups.  In particular, we construct explicit bialgebra endomorphisms  $\alpha$ on an arbitrary FRT quantum group $A(\gamma)$ (Theorem ~\ref{thm:frt}).  Together with Theorem ~\ref{thm:twist}, this result yields cobraided Hom-bialgebras $A(\gamma)_\alpha$, generalizing the FRT quantum group $A(\gamma)$ (Corollary ~\ref{cor:frt}).  As an example, we observe that the cobraided Hom-bialgebras $\mqalpha$ \eqref{mqalpha}, $\mpq_\alpha$ \eqref{mpqalpha}, and $\mqnsalpha$ \eqref{mq'alpha} can all be obtained this way (Example ~\ref{ex2:mq}).

A second twisting procedure (Theorem ~\ref{thm:twistinj}) applies to a cobraided Hom-bialgebra $A$ in which the twisting map $\alpha$ is injective.  In this case, there is a sequence $\{\an\}_{n \geq 1}$ of cobraided Hom-bialgebras with the same (co)multiplication and twisting map $\alpha$ as $A$.  The cobraided Hom-bialgebra $\an$ is obtained from $A$ by twisting the bilinear form $R$ by $\alpha^n$, i.e., by replacing $R$ by $\rn = R \circ (\alpha^n \otimes \alpha^n)$.  As examples, we apply this twisting procedure (and Theorem ~\ref{thm:twist}) to quantum group bialgebras (Example ~\ref{ex:groupbi}), the anyon-generating quantum groups  \cite{lm,majid92} (Example ~\ref{ex:anyon}), and an integral version of the anyon-generating quantum groups (Example ~\ref{ex:Z}).

In section ~\ref{sec:duality} we study the duality between braided and cobraided Hom-bialgebras.  In particular, we show that the dual of a finite dimensional braided Hom-bialgebra \cite{yau8} (Definition ~\ref{def:braided}) is a cobraided Hom-bialgebra, and vice versa (Theorem ~\ref{thm:dual}).  As examples, we consider two finite dimensional versions of the quantum enveloping algebra $\uq$ \cite{dri87,kr,skl3}, denoted by $\uqr$ (Example ~\ref{ex:uqr}) and $\smalluq$ (Example ~\ref{ex:smalluq}) \cite{lm,majid92}.  Using a twisting procedure in \cite{yau8} (dual to Theorem ~\ref{thm:twist}), we obtain families of finite dimensional braided Hom-bialgebras $\uqr_\alpha$ \eqref{uqralpha} and $\smalluq_\alpha$ \eqref{smalluqalpha}.  Since the finite dimensional braided Hom-bialgebras $\uqr_\alpha$ and $\smalluq_\alpha$ are simultaneously non-associative, non-coassociative, non-commutative, and non-cocommutative, so are their dual cobraided Hom-bialgebras.

Every comodule over a cobraided bialgebra has a canonical solution \eqref{BVV} of the Yang-Baxter equation, induced by the comodule structure and the cobraiding form.  In section ~\ref{sec:hybe} we show that every comodule (Definition ~\ref{def:comodule}) over a cobraided Hom-bialgebra with $\alpha$-invariant $R$ has a canonical solution of the HYBE (Corollary ~\ref{cor:comod}).  Similar to the un-twisted case, a crucial ingredient in establishing this solution of the HYBE (see the proof of Lemma ~\ref{lem1:comod}) is the fact that the bilinear form $R$ in a cobraided Hom-bialgebra satisfies the OQHYBE \eqref{oqhybe}.

In section ~\ref{sec:coaction} we study Hom-type, non-associative and non-coassociative analogues of quantum geometry.  One aspect of quantum geometry consists of coactions of suitable quantum groups on the standard, fermionic, and mixed quantum planes (Definition ~\ref{def:qplanes}).  We consider Hom versions of the usual quantum matrices comodule algebra structures on these quantum planes (Example ~\ref{ex:qsym}).  We first twist each of these quantum planes into an infinite family of Hom-associative algebras, called the (standard, fermionic, or mixed) Hom-quantum planes (Example ~\ref{ex:hqp}).  Then we show that each such Hom-quantum plane is non-trivially a comodule Hom-algebra (Definition ~\ref{def:cha}) over the cobraided Hom-bialgebras $\mqalpha$ or $\mqnsalpha$ (Examples ~\ref{ex:shqsym} - ~\ref{ex:mhqsym}) constructed in section ~\ref{sec:twist}.  Thus, we have multi-parameter families of Hom-type, non-(co)associative twistings of the usual quantum matrices coactions on the various quantum planes.  The non-associativity refers to that of the multiplications in the cobraided Hom-bialgebras $\mqalpha$ and $\mqnsalpha$ and in the Hom-quantum planes.  The non-coassociativity refers to that of the comultiplications in $\mqalpha$ and $\mqnsalpha$ and in the comodule structure map \eqref{homcoass}.

\section{Cobraided Hom-bialgebras and operator QHYBEs}
\label{sec:dqt}

In this section, we define cobraided Hom-bialgebra (Definition ~\ref{def:dqt}), the main object of study in this paper.  We show that each cobraided Hom-bialgebra has solutions of the Hom versions of the operator quantum Yang-Baxter equation (Theorem ~\ref{thm:oqhybe}).  At the end of this section, we give alternative characterizations of some of the defining axioms of a cobraided Hom-bialgebra (Theorems ~\ref{thm:axiom1} and ~\ref{thm:axiom2}).

\subsection{Conventions and notations}

The conventions are the same as in \cite{yau8}.  We work over a fixed associative and commutative ring $\bk$ of characteristic $0$.  Modules, tensor products, and linear maps are all taken over $\bk$.  If $V$ and $W$ are $\bk$-modules, then $\tau \colon V \otimes W \to W \otimes V$ denotes the twist isomorphism, $\tau(v \otimes w) = w \otimes v$.  For a map $\phi \colon V \to W$ and $v \in V$, we sometimes write $\phi(v)$ as $\langle \phi,v\rangle$.  If $\bk$ is a field and $V$ is a $\bk$-vector space, then the linear dual of $V$ is $V^* = \Hom(V,\bk)$.  From now on, whenever the linear dual $V^*$ is in sight, we tacitly assume that $\bk$ is a characteristic $0$ field.

Given a bilinear map $\mu \colon V^{\otimes 2} \to V$ and elements $x,y \in V$, we often write $\mu(x,y)$ as $xy$ and put in parentheses for longer products.  For a map $\Delta \colon V \to V^{\otimes 2}$, we use Sweedler's notation \cite{sweedler} for comultiplication: $\Delta(x) = \sum_{(x)} x_1 \otimes x_2$.  To simplify the typography in computations, we often omit the subscript in $\sum_{(x)}$ and even the summation sign itself.


\begin{definition}
\label{def:homas}
\begin{enumerate}
\item
A \textbf{Hom-associative algebra} \cite{ms} $(A,\mu,\alpha)$ consists of a  $\bk$-module $A$, a bilinear map $\mu \colon A^{\otimes 2} \to A$ (the multiplication), and a linear self-map $\alpha \colon A \to A$ (the twisting map) such that:
\begin{equation}
\label{homassaxioms}
\begin{split}
\alpha \circ \mu &= \mu \circ \alpha^{\otimes 2} \quad \text{(multiplicativity)},\\
\mu \circ (\alpha \otimes \mu) &= \mu \circ (\mu \otimes \alpha) \quad \text{(Hom-associativity)}.
\end{split}
\end{equation}
\item
A \textbf{Hom-coassociative coalgebra} \cite{ms2,ms4} $(C,\Delta,\alpha)$ consists of a $\bk$-module $C$, a linear map $\Delta \colon C \to C^{\otimes 2}$ (the comultiplication), and a linear self-map $\alpha \colon C \to C$ (the twisting map) such that:
\begin{equation}
\label{homcoassaxioms}
\begin{split}
\alpha^{\otimes 2} \circ \Delta &= \Delta \circ \alpha \quad \text{(comultiplicativity)},\\
(\alpha \otimes \Delta) \circ \Delta &= (\Delta \otimes \alpha) \circ \Delta \quad \text{(Hom-coassociativity)}.
\end{split}
\end{equation}
\item
A \textbf{Hom-bialgebra} \cite{ms2,yau3} is a quadruple $(A,\mu,\Delta,\alpha)$ in which $(A,\mu,\alpha)$ is a Hom-associative algebra, $(A,\Delta,\alpha)$ is a Hom-coassociative coalgebra, and the following condition holds:
\begin{equation}
\label{def:hombi}
\Delta \circ \mu = \mu^{\otimes 2} \circ (Id \otimes \tau \otimes Id) \circ \Delta^{\otimes 2}.
\end{equation}
In terms of elements, \eqref{def:hombi} means that
\[
\Delta(ab) = \sum a_1b_1 \otimes a_2b_2
\]
for all $a, b \in A$.
\end{enumerate}
\end{definition}

Observe that a Hom-bialgebra is neither associative nor coassociative, unless of course $\alpha = Id$, in which case we have a bialgebra.  Instead of (co)associativity, in a Hom-bialgebra we have Hom-(co)associativity, $\mu \circ (\alpha \otimes \mu) = \mu \circ (\mu \otimes \alpha)$ and $(\alpha \otimes \Delta) \circ \Delta = (\Delta \otimes \alpha) \circ \Delta$.  So, roughly speaking, the degree of non-(co)associativity in a Hom-bialgebra is measured by how far the twisting map $\alpha$ deviates from the identity map.

\begin{example}[\textbf{Twisting classical structures}]
\label{ex:homas}
\begin{enumerate}
\item
If $(A,\mu)$ is an associative algebra and $\alpha \colon A \to A$ is an algebra morphism, then $A_\alpha = (A,\mu_\alpha,\alpha)$ is a Hom-associative algebra with the twisted multiplication $\mu_\alpha = \alpha \circ \mu$ \cite{yau2}.  Indeed, the Hom-associativity axiom $\mu_\alpha \circ (\alpha \otimes \mu_\alpha) = \mu_\alpha \circ (\mu_\alpha \otimes \alpha)$ is equal to $\alpha^2$ applied to the associativity axiom of $\mu$.  Likewise, both sides of the multiplicativity axiom $\alpha \circ \mu_\alpha = \mu_\alpha \circ \alpha^{\otimes 2}$ are equal to $\alpha^2 \circ \mu$.
\item
Dually, if $(C,\Delta)$ is a coassociative coalgebra and $\alpha \colon C \to C$ is a coalgebra morphism, then $C_\alpha = (C,\Delta_\alpha,\alpha)$ is a Hom-coassociative coalgebra with the twisted comultiplication $\Delta_\alpha = \Delta \circ \alpha$.
\item
A bialgebra is exactly a Hom-bialgebra with $\alpha = Id$.  More generally, combining the previous two cases, if $(A,\mu,\Delta)$ is a bialgebra and $\alpha \colon A \to A$ is a bialgebra morphism, then $A_\alpha = (A,\mu_\alpha,\Delta_\alpha,\alpha)$ is a Hom-bialgebra.  The compatibility condition \eqref{def:hombi} for $\Delta_\alpha = \Delta \circ \alpha$ and $\mu_\alpha = \alpha \circ \mu$ is straightforward to check.\qed
\end{enumerate}
\end{example}

\begin{example}[\textbf{Duality}]
\label{ex:duality}
\begin{enumerate}
\item
Let $(C,\Delta,\alpha)$ be a Hom-coassociative coalgebra and $C^*$ be the linear dual of $C$.  Then we have a Hom-associative algebra $(C^*,\Delta^*,\alpha^*)$, where
\begin{equation}
\label{deltadual}
\langle \Delta^*(\phi,\psi),x\rangle = \langle \phi \otimes \psi, \Delta(x)\rangle \quad \text{and} \quad
\alpha^*(\phi) = \phi \circ \alpha
\end{equation}
for all $\phi, \psi \in C^*$ and $x \in C$.  This is checked in exactly the same way as for (co)associative algebras \cite[2.1]{abe}, as was done in \cite[Corollary 4.12]{ms4}.
\item
Likewise, suppose that $(A,\mu,\alpha)$ is a finite dimensional Hom-associative algebra. Then $(A^*,\mu^*,\alpha^*)$ is a Hom-coassociative coalgebra, where
\begin{equation}
\label{mudual}
\langle \mu^*(\phi), x \otimes y\rangle = \langle \phi, \mu(x,y)\rangle  \quad \text{and} \quad
\alpha^*(\phi) = \phi \circ \alpha
\end{equation}
for all $\phi \in A^*$ and $x,y \in A$ \cite[Corollary 4.12]{ms4}.  In what follows, whenever $\mu^* \colon A^* \to A^* \otimes A^*$ is in sight, we tacitly assume that $A$ is finite dimensional.
\item
Combining the previous two examples, suppose that $(A,\mu,\Delta,\alpha)$ is a finite dimensional Hom-bialgebra.  Then so is $(A^*,\Delta^*,\mu^*,\alpha^*)$, where $\Delta^*$, $\mu^*$, and $\alpha^*$ are defined as in \eqref{deltadual} and \eqref{mudual}.\qed
\end{enumerate}
\end{example}

We are now ready to give the main definition of this paper.


\begin{definition}
\label{def:dqt}
A \textbf{cobraided Hom-bialgebra} is a quintuple $(A,\mu,\Delta,\alpha,R)$ in which $(A,\mu,\Delta,\alpha)$ is a Hom-bialgebra and $R$ is a bilinear form on $A$ (i.e., $R \in \Hom(A^{\otimes 2},\bk)$), satisfying the following three axioms for $x,y,z \in A$:
\begin{subequations}
\label{dqtaxioms}
\begin{align}
R(xy \otimes \alpha(z)) &= \sum_{(z)} R(\alpha(x) \otimes z_1)R(\alpha(y) \otimes z_2),\label{axiom1}\\
R(\alpha(x) \otimes yz) &= \sum_{(x)} R(x_1 \otimes \alpha(z))R(x_2 \otimes \alpha(y)),\label{axiom2}\\
\sum_{(x)(y)} y_1x_1R(x_2 \otimes y_2) &= \sum_{(x)(y)} R(x_1 \otimes y_1)x_2y_2.\label{axiom3}
\end{align}
\end{subequations}
Here $\Delta(x) = \sum_{(x)} x_1 \otimes x_2$.  We call $R$ the \textbf{Hom-cobraiding form}.  We say that $R$ is \textbf{$\alpha$-invariant} if $R = R \circ \alpha^{\otimes 2}$.
\end{definition}

The notion of $\alpha$-invariance will play a major role in section ~\ref{sec:hybe}.

As the terminology suggests, cobraided Hom-bialgebras are dual to braided Hom-bialgebras \cite{yau8}, at least in the finite dimensional case.  We will prove this in Theorem ~\ref{thm:dual}.  The axioms \eqref{axiom1} and \eqref{axiom2} can be interpreted as saying that $R$ is a Hom version of a bialgebra bicharacter \cite[p.51]{majid}. Axiom \eqref{axiom3} says that the multiplication $\mu$ is ``almost commutative."

\begin{example}
\label{ex:dqtbialgebra}
A \textbf{cobraided bialgebra} (\cite{hay,lt,majid91,sch}, \cite[VIII.5]{kassel}, \cite[2.2]{majid}) $(A,R)$ consists of a bialgebra $A$ and a bilinear form $R$ on $A$, called the \textbf{universal $R$-form} or the \textbf{cobraiding form}, such that the three axioms \eqref{dqtaxioms} hold for $\alpha = Id$.  In the literature, a cobraided bialgebra is usually assumed to have a unit and a counit, and the cobraiding form $R$ is usually required to be invertible under the convolution product in $\Hom(A^{\otimes 2},\bk)$.  In this paper, we never have to use the invertibility of the cobraiding form in a cobraided bialgebra.  We will provide more examples of cobraided Hom-bialgebras in the next few sections.
\qed
\end{example}

We now study the connection between cobraided Hom-bialgebras and Hom versions of the operator quantum Yang-Baxter equation.  Recall that the \textbf{quantum Yang-Baxter equation} (QYBE) \cite{dri85} states
\begin{equation}
\label{qybe}
R_{12}R_{13}R_{23} = R_{23}R_{13}R_{12}.
\end{equation}
If $(H,R)$ is a braided bialgebra \cite{dri87}, then $R \in H^{\otimes 2}$ satisfies the QYBE, where $R_{12} = R \otimes 1$, $R_{23} = 1 \otimes R$, and $R_{13} = (\tau \otimes Id)(R_{23})$.  See, e.g., \cite[Lemma 2.1.4]{majid}. Dually, in a cobraided bialgebra $(A,R)$ (Example ~\ref{ex:dqtbialgebra}), the cobraiding form satisfies the \textbf{operator quantum Yang-Baxter equation} (OQYBE)
\begin{equation}
\label{oqybe}
\sum R(x_1 \otimes y_1)R(x_2 \otimes z_1)R(y_2 \otimes z_2) =
\sum R(y_1 \otimes z_1)R(x_1 \otimes z_2)R(x_2 \otimes y_2)
\end{equation}
for $x,y,z \in A$.  See, e.g., \cite[Lemma 2.2.3]{majid}.  This is a consequence of \eqref{axiom2} (with $\alpha = Id$) and \eqref{axiom3}.

The following result shows that the Hom-cobraiding form in a cobraided Hom-bialgebra satisfies two Hom-type generalizations of the OQYBE.


\begin{theorem}
\label{thm:oqhybe}
Let $(A,\mu,\Delta,\alpha,R)$ be a cobraided Hom-bialgebra.  Then for all $x,y,z \in A$ we have
\begin{equation}
\label{oqhybe}
\begin{split}
\sum & R(x_1 \otimes \alpha(y_1))R(x_2 \otimes \alpha(z_1))R(y_2 \otimes z_2)\\
&= \sum R(y_1 \otimes z_1)R(x_1 \otimes \alpha(z_2))R(x_2 \otimes \alpha(y_2))
\end{split}
\end{equation}
and
\begin{equation}
\label{oqhybe2}
\begin{split}
\sum & R(x_1 \otimes y_1)R(\alpha(x_2) \otimes z_1)R(\alpha(y_2) \otimes z_2)\\
&= \sum R(\alpha(y_1) \otimes z_1)R(\alpha(x_1) \otimes z_2)R(x_2 \otimes y_2),
\end{split}
\end{equation}
called the \textbf{operator quantum Hom-Yang-Baxter equations} (OQHYBEs).
\end{theorem}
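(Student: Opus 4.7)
The plan is to derive both OQHYBEs from the ``almost commutativity'' axiom \eqref{axiom3} by pairing it against a third element through the bilinear form $R$, and then using the bicharacter-type axioms \eqref{axiom1} and \eqref{axiom2} to expand the resulting $R$-values on products into triple products of $R$-values. This mirrors the classical derivation of the OQYBE from \eqref{axiom3} and \eqref{axiom2} with $\alpha = Id$, but with the twisting map $\alpha$ intervening in exactly the right spots.

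For \eqref{oqhybe}, I would first apply axiom \eqref{axiom3} to the pair $(y,z)$ in place of $(x,y)$, obtaining the element identity
\[
\sum_{(y)(z)} z_1 y_1\, R(y_2 \otimes z_2) = \sum_{(y)(z)} R(y_1 \otimes z_1)\, y_2 z_2 \quad \text{in } A.
\]
Then I would pair this identity on the left against $\alpha(x)$ via $R$, i.e., apply the scalar-valued map $R(\alpha(x) \otimes -)$ to both sides (this is legitimate by bilinearity of $R$). On the left-hand side I apply axiom \eqref{axiom2} to $R(\alpha(x) \otimes z_1 y_1)$ — viewing the second slot as a product — which produces $\sum R(x_1 \otimes \alpha(y_1)) R(x_2 \otimes \alpha(z_1))$. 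On the right-hand side I apply \eqref{axiom2} to $R(\alpha(x) \otimes y_2 z_2)$, producing $\sum R(x_1 \otimes \alpha(z_2)) R(x_2 \otimes \alpha(y_2))$. Tying in the leftover factor $R(y_2 \otimes z_2)$ (resp.\ $R(y_1 \otimes z_1)$) then yields exactly \eqref{oqhybe}.

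For \eqref{oqhybe2}, the strategy is symmetric: use axiom \eqref{axiom3} for the pair $(x,y)$ directly, and pair both sides on the right against $\alpha(z)$ by applying $R(- \otimes \alpha(z))$. Now axiom \eqref{axiom1} is the relevant expansion rule, since the first argument of $R$ is a product. On the left this turns $R(y_1 x_1 \otimes \alpha(z))$ into $\sum R(\alpha(y_1) \otimes z_1) R(\alpha(x_1) \otimes z_2)$, while on the right it turns $R(x_2 y_2 \otimes \alpha(z))$ into $\sum R(\alpha(x_2) \otimes z_1) R(\alpha(y_2) \otimes z_2)$. Combining with the remaining $R(x_2 \otimes y_2)$ (resp.\ $R(x_1 \otimes y_1)$) factor from \eqref{axiom3} gives \eqref{oqhybe2}.

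The only real subtlety is bookkeeping the Sweedler subscripts: the $(y_1,y_2)$ and $(z_1,z_2)$ that appear in the final identity \eqref{oqhybe} come from the initial $\Delta(y)$ and $\Delta(z)$ used in invoking \eqref{axiom3}, while the $(x_1,x_2)$ arise from \eqref{axiom2}; one should check that these indices remain independent and are not further split by Hom-coassociativity (which is not invoked here). No finite-dimensionality or $\alpha$-invariance of $R$ is needed — just bilinearity of $R$, axiom \eqref{axiom3}, and the appropriate bicharacter axiom. I expect no obstacles beyond this careful index tracking.
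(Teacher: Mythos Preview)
Your proposal is correct and matches the paper's proof essentially step for step: for \eqref{oqhybe} the paper collapses the left-hand side via \eqref{axiom2} to $R(\alpha(x)\otimes z_1y_1)R(y_2\otimes z_2)$, applies \eqref{axiom3} inside the second slot, and re-expands with \eqref{axiom2}; for \eqref{oqhybe2} it does the symmetric thing with \eqref{axiom1}. The only difference is presentational --- the paper writes each as a single left-to-right chain of equalities rather than starting from \eqref{axiom3} and expanding both sides --- but the ingredients, order of axiom use, and index bookkeeping are identical.
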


\begin{proof}
To prove \eqref{oqhybe}, we compute as follows:
\[
\begin{split}
R(x_1 \otimes \alpha(y_1))R(x_2 \otimes \alpha(z_1))R(y_2 \otimes z_2)
&= R(\alpha(x) \otimes z_1y_1)R(y_2 \otimes z_2) \quad\text{by \eqref{axiom2}}\\
&= R(\alpha(x) \otimes z_1y_1R(y_2 \otimes z_2))\\
&= R(\alpha(x) \otimes R(y_1 \otimes z_1)y_2z_2) \quad\text{by \eqref{axiom3}}\\
&= R(y_1 \otimes z_1)R(\alpha(x) \otimes y_2z_2)\\
&= R(y_1 \otimes z_1)R(x_1 \otimes \alpha(z_2))R(x_2 \otimes \alpha(y_2))\quad\text{by \eqref{axiom2}}.
\end{split}
\]
Similarly, to prove \eqref{oqhybe2}, we compute as follows:
\[
\begin{split}
R(x_1 \otimes y_1)R(\alpha(x_2) \otimes z_1)R(\alpha(y_2) \otimes z_2)
&= R(x_1 \otimes y_1)R(x_2y_2 \otimes \alpha(z)) \quad\text{by \eqref{axiom1}}\\
&= R(R(x_1 \otimes y_1)x_2y_2 \otimes \alpha(z))\\
&= R(y_1x_1R(x_2 \otimes y_2) \otimes \alpha(z)) \quad\text{by \eqref{axiom3}}\\
&= R(y_1x_1 \otimes \alpha(z))R(x_2 \otimes y_2)\\
&= R(\alpha(y_1) \otimes z_1)R(\alpha(x_1) \otimes z_2)R(x_2 \otimes y_2) \quad\text{by \eqref{axiom1}}.
\end{split}
\]
\end{proof}

The OQHYBEs \eqref{oqhybe} and \eqref{oqhybe2} are the operator forms of the QHYBEs in \cite{yau8}, which are Hom-type, non-associative generalizations of the QYBE \eqref{qybe}.  The OQHYBE \eqref{oqhybe} will play a key role in Theorem ~\ref{thm:comod} (specifically, the proof of Lemma ~\ref{lem1:comod}) when we generate solutions of the Hom-Yang-Baxter equation \cite{yau5,yau6} \eqref{hybe} from comodules of suitable cobraided Hom-bialgebras.


We end this section with some alternative characterizations of the axioms \eqref{axiom1} and \eqref{axiom2}.  Let $(A,\mu,\Delta,\alpha)$ be a Hom-bialgebra and $R$ be an arbitrary bilinear form on $A$.  Recall that $A^* = \Hom(A,\bk)$ denotes the linear dual of $A$.  Define the linear maps $\pi_1,\pi_1',\pi_2,\pi_2' \colon A \to A^*$ by
\begin{equation}
\label{pi}
\begin{split}
\pi_1(x) &= R(-\otimes \alpha(x)),\quad \pi_1'(x) = R(\alpha(-)\otimes x),\\
\pi_2(x) &= R(\alpha(x) \otimes -),\quad \pi_2'(x) = R(x \otimes \alpha(-))
\end{split}
\end{equation}
for $x \in A$.  In the following two results, we use the induced operations $\Delta^* \colon A^* \otimes A^* \to A^*$ \eqref{deltadual} and $\mu^* \colon A^* \to A^* \otimes A^*$ \eqref{mudual} (when $A$ is finite dimensional) on the dual space.

\begin{theorem}
\label{thm:axiom1}
Let $(A,\mu,\Delta,\alpha)$ be a Hom-bialgebra and $R$ be a bilinear form on $A$.  With the notations \eqref{pi}, the following statements are equivalent:
\begin{enumerate}
\item
The condition \eqref{axiom1} holds.
\item
The diagram
\begin{equation}
\label{axiom1'}
\SelectTips{cm}{10}
\xymatrix{
A \otimes A \ar[rr]^-{\pi_2 \otimes \pi_2} \ar[d]_-{\mu} & & A^* \otimes A^* \ar[d]^-{\Delta^*}\\
A \ar[rr]^-{\pi_2'} & & A^*
}
\end{equation}
is commutative.
\end{enumerate}
If $A$ is finite dimensional, then the two statements above are also equivalent to the commutativity of the diagram
\begin{equation}
\label{axiom1''}
\SelectTips{cm}{10}
\xymatrix{
A \ar[rr]^-{\pi_1} \ar[d]_-{\Delta} & & A^* \ar[d]^-{\mu^*}\\
A \otimes A \ar[rr]^-{\pi_1' \otimes \pi_1'} & & A^* \otimes A^*.
}
\end{equation}
\end{theorem}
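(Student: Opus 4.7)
The plan is to verify each equivalence by unwinding the definitions and comparing the two sides of the proposed diagrams pointwise. Both diagrams are purely formal adjunction statements built from the cobraiding form $R$, so the proof should reduce to applying the defining formulas for $\Delta^*$, $\mu^*$, $\pi_i$, $\pi_i'$ and recognizing axiom \eqref{axiom1}.

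First I would prove the equivalence (i) $\Leftrightarrow$ (ii). I would fix arbitrary $x, y, z \in A$, and compute $\langle \Delta^*(\pi_2(x) \otimes \pi_2(y)), z \rangle$ by applying the definition \eqref{deltadual} of $\Delta^*$, the definition \eqref{pi} of $\pi_2$, and Sweedler notation for $\Delta(z)$. This should give $\sum_{(z)} R(\alpha(x) \otimes z_1) R(\alpha(y) \otimes z_2)$, the right-hand side of \eqref{axiom1}. On the other side, $\langle \pi_2' \circ \mu(x \otimes y), z \rangle = \langle \pi_2'(xy), z \rangle = R(xy \otimes \alpha(z))$ by definition of $\pi_2'$, which is the left-hand side of \eqref{axiom1}. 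Since $x, y, z$ are arbitrary, the commutativity of \eqref{axiom1'} is equivalent to \eqref{axiom1}.

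Next, under the finite-dimensional hypothesis (which makes $\mu^* \colon A^* \to A^* \otimes A^*$ well-defined via \eqref{mudual}), I would prove (i) $\Leftrightarrow$ commutativity of \eqref{axiom1''} by the analogous pairing argument. For $x, y, z \in A$, pair the top-right composition with $x \otimes y \in A \otimes A$: by \eqref{mudual} and \eqref{pi},
\[
\langle \mu^*(\pi_1(z)), x \otimes y\rangle = \langle \pi_1(z), xy\rangle = R(xy \otimes \alpha(z)).
\]
Pair the left-bottom composition with $x \otimes y$: by \eqref{pi},
\[
\langle (\pi_1' \otimes \pi_1')(\Delta(z)), x \otimes y\rangle = \sum_{(z)} R(\alpha(x) \otimes z_1) R(\alpha(y) \otimes z_2).
\]
Again these two expressions coincide for all $x,y,z$ precisely when \eqref{axiom1} holds.

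There is no real obstacle here; the content of the theorem is conceptual rather than computational, asserting that \eqref{axiom1} is the adjoint formulation of saying $\pi_2$ is a Hom-algebra-type morphism (or that $\pi_1$ is a Hom-coalgebra-type morphism into $A^*$ with its induced structure). The only technical point worth highlighting is the need for the finite-dimensional hypothesis to make $\mu^*$ land in $A^* \otimes A^*$ rather than merely in $(A \otimes A)^*$; without this, only the implication corresponding to pairing against decomposable tensors is automatic, which is exactly \eqref{axiom1} itself, so the equivalence still requires $\dim_\bk A < \infty$ to realize $\mu^*$ as a genuine comultiplication on $A^*$.
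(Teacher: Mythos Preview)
Your proposal is correct and follows essentially the same approach as the paper's own proof: both unwind the definitions of $\pi_i$, $\pi_i'$, $\Delta^*$, and $\mu^*$ and compare the two sides of each diagram against the left- and right-hand sides of \eqref{axiom1} by pairing with arbitrary elements. Your added remark explaining why the finite-dimensional hypothesis is needed for the second equivalence is a nice touch that the paper leaves implicit.
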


\begin{proof}
To prove the equivalence between \eqref{axiom1} and the commutativity of the diagram \eqref{axiom1'}, note that the left-hand side in \eqref{axiom1} is
\[
R(xy \otimes \alpha(z)) = \langle \pi_2'(xy),z\rangle.
\]
On the other hand, the right-hand side in \eqref{axiom1} is
\[
\begin{split}
R(\alpha(x) \otimes z_1)R(\alpha(y) \otimes z_2)
&= \langle \pi_2(x),z_1\rangle \langle \pi_2(y),z_2\rangle\\
&= \langle \pi_2(x) \otimes \pi_2(y), \Delta(z)\rangle\\
&= \langle \Delta^*(\pi_2(x),\pi_2(y)), z\rangle.
\end{split}
\]
Therefore, \eqref{axiom1} is equivalent to the equality
\[
\langle \pi_2'(xy),z\rangle = \langle \Delta^*(\pi_2(x),\pi_2(y)), z\rangle
\]
for all $x,y,z \in A$, which in turn is equivalent to the commutativity of the diagram \eqref{axiom1'}.

For the second assertion, assume now that $A$ is finite dimensional, so $\mu^*$ is well-defined.  Then the left-hand side in \eqref{axiom1} is
\begin{equation}
\label{axiom1com}
R(xy \otimes \alpha(z)) = \langle\pi_1(z),xy\rangle = \langle \mu^*(\pi_1(z)), x \otimes y \rangle.
\end{equation}
On the other hand, the right-hand side in \eqref{axiom1} is
\begin{equation}
\label{axiom1com2}
\begin{split}
R(\alpha(x) \otimes z_1)R(\alpha(y) \otimes z_2)
&= \langle \pi_1'(z_1),x\rangle \langle \pi_1'(z_2),y\rangle\\
&= \langle \pi_1'(z_1) \otimes \pi_1'(z_2), x \otimes y\rangle\\
&= \langle \pi_1'^{\otimes 2}(\Delta(z)), x \otimes y\rangle.
\end{split}
\end{equation}
It follows from \eqref{axiom1com} and \eqref{axiom1com2} that \eqref{axiom1} is equivalent to the commutativity of the diagram \eqref{axiom1''}.
\end{proof}

The following result is the analogue of Theorem ~\ref{thm:axiom1} for the axiom \eqref{axiom2}.  Its proof is completely analogous to that of Theorem ~\ref{thm:axiom2}, so we will omit it.

\begin{theorem}
\label{thm:axiom2}
Let $(A,\mu,\Delta,\alpha)$ be a Hom-bialgebra and $R$ be a bilinear form on $A$.  With the notations \eqref{pi}, the following statements are equivalent:
\begin{enumerate}
\item
The condition \eqref{axiom2} holds.
\item
The diagram
\begin{equation}
\label{axiom2'}
\SelectTips{cm}{10}
\xymatrix{
A \otimes A \ar[rr]^-{\pi_1 \otimes \pi_1} \ar[d]_-{\mu} & & A^* \otimes A^* \ar[d]^-{\Delta^{*op}}\\
A \ar[rr]^-{\pi_1'} & & A^*
}
\end{equation}
is commutative, where $\Delta^{*op} = \Delta^* \circ \tau$.
\end{enumerate}
If $A$ is finite dimensional, then the two statements above are also equivalent to the commutativity of the diagram
\begin{equation}
\label{axiom2''}
\SelectTips{cm}{10}
\xymatrix{
A \ar[rr]^-{\pi_2} \ar[d]_-{\Delta^{op}} & & A^* \ar[d]^-{\mu^*}\\
A \otimes A \ar[rr]^-{\pi_2' \otimes \pi_2'} & & A^* \otimes A^*,
}
\end{equation}
where $\Delta^{op} = \tau \circ \Delta$.
\end{theorem}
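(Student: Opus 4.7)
The plan is to mirror the proof of Theorem~\ref{thm:axiom1} essentially verbatim. The one bookkeeping difference is that in axiom~\eqref{axiom2} the factor $z$ is paired with the first Sweedler component $x_1$ and $y$ with $x_2$, which is the opposite of the pairing appearing in~\eqref{axiom1}; this is precisely why the diagrams~\eqref{axiom2'} and~\eqref{axiom2''} feature the opposite-twisted maps $\Delta^{*op}$ and $\Delta^{op}$.

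First I would rewrite both sides of~\eqref{axiom2} as pairings with $x$. The left-hand side is immediately
\[
R(\alpha(x)\otimes yz) \;=\; \langle \pi_1'(yz),\,x\rangle,
\]
which is the down-then-right composite of~\eqref{axiom2'} applied to $(y,z)$ and paired with $x$. For the right-hand side, the definition $\pi_1(u)=R(-\otimes\alpha(u))$ gives
\[
\sum R(x_1\otimes\alpha(z))R(x_2\otimes\alpha(y)) \;=\; \sum \langle\pi_1(z),x_1\rangle\langle\pi_1(y),x_2\rangle \;=\; \langle \pi_1(z)\otimes\pi_1(y),\,\Delta(x)\rangle.
\]
By the definition $\Delta^{*op}=\Delta^*\circ\tau$ and of $\Delta^*$, this last expression equals $\langle \Delta^{*op}(\pi_1(y)\otimes\pi_1(z)),\,x\rangle$, which is the right-then-down composite of~\eqref{axiom2'}. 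Since this equality holds for all $x$ iff the two functionals coincide, (1) and (2) are equivalent.

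For the finite-dimensional case I proceed analogously but pair the two sides of~\eqref{axiom2} with $y\otimes z$ rather than with $x$. The left-hand side becomes $\langle \pi_2(x),yz\rangle=\langle \mu^*(\pi_2(x)),\,y\otimes z\rangle$, the right-then-down composite of~\eqref{axiom2''}. The right-hand side becomes
\[
\sum \langle\pi_2'(x_2),y\rangle\langle\pi_2'(x_1),z\rangle \;=\; \langle (\pi_2'\otimes\pi_2')(x_2\otimes x_1),\,y\otimes z\rangle \;=\; \langle (\pi_2'\otimes\pi_2')\,\Delta^{op}(x),\,y\otimes z\rangle,
\]
the down-then-right composite. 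Equating these as $y,z$ vary characterizes commutativity of~\eqref{axiom2''}.

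The only point requiring care is correctly tracking which Sweedler component of $x$ is paired with $y$ versus $z$, so as to place $\tau$ on the correct side of the duality; there is no substantive obstacle, since the argument is purely formal dualization once the pairings are read off.
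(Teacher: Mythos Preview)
Your proposal is correct and takes exactly the approach the paper intends: the paper omits the proof of Theorem~\ref{thm:axiom2} entirely, stating only that it is completely analogous to that of Theorem~\ref{thm:axiom1}, and your argument carries out this analogy with the correct bookkeeping for the swapped Sweedler components (accounting for the appearance of $\Delta^{*op}$ and $\Delta^{op}$).
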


\section{Twisting cobraided bialgebras into cobraided Hom-bialgebras}
\label{sec:twist}

In this section, we give the first of three general methods for constructing cobraided Hom-bialgebras (Definition ~\ref{def:dqt}).  We will twist the (co)multiplications in cobraided bialgebras \cite{hay,lt,majid91,sch} (Example ~\ref{ex:dqtbialgebra}) along any bialgebra endomorphisms (Theorem ~\ref{thm:twist}).  Under this twisting procedure, the bilinear form $R$ stays the same.  As a result, every cobraided bialgebra gives rise to a family (usually infinite) of cobraided Hom-bialgebras.   We illustrate this twisting procedure with several quantum groups related to quantum matrices (Examples ~\ref{ex:mq} - \ref{ex:mq'}).  Corollary ~\ref{cor:frt} and  Examples ~\ref{ex:groupbi} - ~\ref{ex:Z} give further illustrations of Theorem ~\ref{thm:twist}.

\begin{theorem}
\label{thm:twist}
Let $(A,\mu,\Delta,R)$ be a cobraided bialgebra with multiplication $\mu$ and comultiplication $\Delta$, and let $\alpha \colon A \to A$ be a bialgebra morphism.  Then
\[
A_\alpha = (A,\mu_\alpha,\Delta_\alpha,\alpha,R)
\]
is a cobraided Hom-bialgebra, where $\mu_\alpha = \alpha \circ \mu$ and $\Delta_\alpha = \Delta \circ \alpha$.
\end{theorem}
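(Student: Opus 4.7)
The plan is to verify that $A_\alpha$ satisfies four things: (i) it is a Hom-bialgebra, and (ii)--(iv) the three axioms \eqref{axiom1}, \eqref{axiom2}, \eqref{axiom3} of Definition \ref{def:dqt} hold with respect to $\mu_\alpha$, $\Delta_\alpha$, $\alpha$, and the unchanged bilinear form $R$. Part (i) is already recorded in Example \ref{ex:homas}(3), so no work is needed there; the substance of the argument lies in verifying the three Hom-cobraiding axioms.

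The guiding strategy for (ii)--(iv) is to reduce each twisted identity to the corresponding untwisted axiom of $(A,\mu,\Delta,R)$ evaluated on the shifted elements $\alpha(x), \alpha(y), \alpha(z)$. The two workhorse identities are the algebra-morphism property $\alpha\circ\mu = \mu\circ \alpha^{\otimes 2}$ and the coalgebra-morphism property $\Delta \circ \alpha = \alpha^{\otimes 2} \circ \Delta$, which together allow one to freely move $\alpha$ through products and coproducts. In particular, applying the latter yields $\Delta_\alpha(z) = \Delta(\alpha(z)) = \sum \alpha(z_1) \otimes \alpha(z_2)$, which is how the Sweedler components of $\Delta_\alpha$ will be rewritten throughout.

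For the twisted \eqref{axiom1}, I would unravel $\mu_\alpha(x,y) = \alpha(xy)$ on the left side and expand $\Delta_\alpha(z)$ as above on the right; the resulting equation is precisely the original \eqref{axiom1} (with $\alpha = \mathrm{Id}$) applied to $\alpha(x), \alpha(y), \alpha(z)$. The twisted \eqref{axiom2} is handled symmetrically. The twisted \eqref{axiom3} requires one extra step: after expanding $\mu_\alpha = \alpha\circ\mu$ and $\Delta_\alpha$ on both sides, each side becomes $\alpha$ applied to a side of the original \eqref{axiom3} for the pair $\alpha(x), \alpha(y)$; one then invokes that original identity and applies $\alpha$ to conclude.

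The only real obstacle is bookkeeping: keeping track of where $\alpha$ lands when $\mu_\alpha$ produces an outer $\alpha$, and correctly rewriting the Sweedler indices of $\Delta_\alpha$ via the coalgebra-morphism property so that the shifted untwisted axiom is visible on the nose. There is no conceptual subtlety, and no use of invertibility of $R$ or of any unit/counit structure is needed.
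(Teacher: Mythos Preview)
Your proposal is correct and follows essentially the same approach as the paper: invoke Example~\ref{ex:homas}(3) for the Hom-bialgebra structure, then reduce each twisted axiom \eqref{axiom1}--\eqref{axiom3} to the corresponding untwisted axiom evaluated at $\alpha(x),\alpha(y),\alpha(z)$ by moving $\alpha$ through products and coproducts. The only cosmetic difference is that the paper writes $\Delta_\alpha(z)=\sum \alpha(z)_1\otimes\alpha(z)_2$ directly from $\Delta_\alpha=\Delta\circ\alpha$, whereas you rewrite it as $\sum \alpha(z_1)\otimes\alpha(z_2)$ via the coalgebra-morphism property; the computations are otherwise identical.
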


\begin{proof}
As discussed in Example ~\ref{ex:homas}, $(A,\mu_\alpha,\Delta_\alpha,\alpha)$ is a Hom-bialgebra.  It remains to check the three conditions \eqref{dqtaxioms} with multiplication $\mu_\alpha$ and comultiplication $\Delta_\alpha$.

For an element $z \in A$, we write $\Delta(z) = \sum_{(z)} z_1 \otimes z_2$ as usual.  Since $\Delta_\alpha = \Delta \circ \alpha$, we have $\Delta_\alpha(z) = \sum_{(\alpha(z))} \alpha(z)_1 \otimes \alpha(z)_2$.  We use concatenation to denote the multiplication $\mu$ in $A$, so $\mu_\alpha(x,y) = \alpha(xy) = \alpha(x)\alpha(y)$.  Since $A$ is a cobraided bialgebra, it satisfies
\begin{equation}
\label{ax1}
R(xy \otimes z) = \sum_{(z)} R(x \otimes z_1)R(y \otimes z_2).
\end{equation}
To check the axiom \eqref{axiom1}, we compute as follows:
\[
\begin{split}
R(\mu_\alpha(x,y) \otimes \alpha(z))
&= R(\alpha(x)\alpha(y) \otimes \alpha(z))\\
&= \sum_{(\alpha(z))} R(\alpha(x) \otimes \alpha(z)_1)R(\alpha(y) \otimes \alpha(z)_2)\quad\text{by \eqref{ax1}}.
\end{split}
\]
This proves \eqref{axiom1} for $A_\alpha$.

Similarly, one uses the property
\[
R(x \otimes yz) = \sum_{(x)} R(x_1 \otimes z)R(x_2 \otimes y)
\]
in $A$ to conclude that
\[
\begin{split}
R(\alpha(x) \otimes \mu_\alpha(y,z))
&= R(\alpha(x) \otimes \alpha(y)\alpha(z))\\
&= \sum_{(\alpha(x))} R(\alpha(x)_1 \otimes \alpha(z))R(\alpha(x)_2 \otimes \alpha(y)).
\end{split}
\]
This proves \eqref{axiom2} for $A_\alpha$.

Finally, to check the axiom \eqref{axiom3} in $A_\alpha$, we compute as follows:
\[
\begin{split}
\sum \mu_\alpha(\alpha(y)_1,\alpha(x)_1)R(\alpha(x)_2 \otimes \alpha(y)_2)
&= \alpha\left(\sum \alpha(y)_1\alpha(x)_1R(\alpha(x)_2 \otimes \alpha(y_2))\right)\\
&= \alpha\left(\sum R(\alpha(x)_1 \otimes \alpha(y)_1)\alpha(x)_2\alpha(y)_2\right) \quad\text{by \eqref{axiom3} in $A$}\\
&= \sum R(\alpha(x)_1 \otimes \alpha(y)_1) \mu_\alpha(\alpha(x)_2,\alpha(y)_2).
\end{split}
\]
In the above computation, we have $\sum = \sum_{(\alpha(x))(\alpha(y))}$. In the first and the third equalities, we used the linearity of $\alpha$ and the definition $\mu_\alpha = \alpha \circ \mu$.
\end{proof}

\begin{remark}
\label{rk:twist}
In the proof of Theorem ~\ref{thm:twist}, we never used the (co)unit in the bialgebra $A$.  Also, we do not need the invertibility of the bilinear form $R$ in $\Hom(A^{\otimes 2},\bk)$.
\end{remark}

\begin{remark}
\label{rk2:twist}
In Theorem ~\ref{thm:twist}, an un-twisted structure is twisted along an endomorphism into a Hom-type structure.  A twisting procedure along these lines was first employed in \cite{yau2} to construct examples of Hom-associative and Hom-Lie algebras.  It has been applied and generalized in \cite{ama,ams,ms4,yau3,yau4,yau7,yau8} to construct other Hom-type algebras.  As discussed in \cite{fg2,gohr}, this twisting procedure is also useful in understanding the structure of Hom-associative algebras.
\end{remark}

We now give a series of examples that illustrate Theorem ~\ref{thm:twist}.  For the rest of this section, we work over a field $\bk$ of characteristic $0$.

\begin{example}[\textbf{Hom-quantum matrices}]
\label{ex:mq}
This example is about a Hom version of the quantum group $\mq$ of quantum matrices.  It will play a major role in section ~\ref{sec:coaction} when we discuss Hom versions of quantum geometry.  The quantum group $\mq$ can be constructed using the FRT construction \cite{rft} and is discussed in many books on quantum groups, such as \cite[Ch.7]{cp}, \cite[Ch.10]{es}, \cite[Ch.IV and VIII]{kassel}, \cite[Ch.4]{majid}, and \cite[Ch.3]{street}.

Let us first recall the bialgebra structure of $\mq$.  Fix a scalar $q \in \bk$ such that $q^{\frac{1}{2}}$ exists and is invertible with $q^2 \not= -1$.  Let $x$ and $y$ be variables that satisfy the \emph{quantum commutation relation}:
\begin{equation}
\label{qcom}
yx = qxy.
\end{equation}
Let $a,b,c,d$ be variables, each of which commuting with both $x$ and $y$.  Define the variables $x'$, $y'$, $x''$, and $y''$ by
\begin{equation}
\label{xy'}
\begin{pmatrix}x'\\y'\end{pmatrix}
= \abcd\xymat \quad\text{and}\quad
\begin{pmatrix}x''\\y''\end{pmatrix}
= \begin{pmatrix}a & c\\b & d\end{pmatrix}\xymat.
\end{equation}
Then the two pairs of variables $(x',y')$ and $(x'',y'')$ both satisfy the quantum commutation relation \eqref{qcom} if and only if the following six relations hold:
\begin{equation}
\label{six}
\begin{split}
ab &= q^{-1}ba,\ bd = q^{-1}db,\ ac = q^{-1}ca,\ cd = q^{-1}dc,\\
bc &= cb,\ ad- da = (q^{-1} - q)bc.
\end{split}
\end{equation}
This is proved by a direct computation.  The algebra $\mq$ is defined as the quotient $\bk\{a,b,c,d\}/\text{\eqref{six}}$, i.e., the unital associative algebra generated by $a,b,c$, and $d$ with relations \eqref{six}.  Of course, this definition of the algebra $\mq$ does not require the quantum commutation relation.  However, the relations \eqref{six} become more intuitive and geometric when considered with the quantum commutation relation for the variables $(x',y')$ and $(x'',y'')$ in \eqref{xy'}.

The algebra $\mq$ becomes a bialgebra when equipped with the comultiplication $\Delta$ determined by
\begin{equation}
\label{deltaMq}
\Delta\abcd = \abcd \otimes \abcd,
\end{equation}
i.e.,
\begin{equation}
\label{deltamq}
\begin{split}
\Delta(a) &= a \otimes a + b \otimes c, \quad
\Delta(b) = a \otimes b + b \otimes d,\\
\Delta(c) &= c \otimes a + d \otimes c, \quad
\Delta(d) = c \otimes b + d \otimes d.
\end{split}
\end{equation}
Note that the bialgebra $\mq$ is both non-commutative (for $q \not= 1$) and non-cocommutative.  The matrix notation \eqref{deltaMq} for morphisms will be used several more times in the rest of this paper.

The bialgebra $\mq$ becomes a cobraided bialgebra (Example ~\ref{ex:dqtbialgebra}) when equipped with the cobraiding form $R \colon \mq \otimes \mq \to \bk$ determined by the so-called \emph{$sl_2$ $R$-matrix}:
\begin{equation}
\label{mqR}
\mqr = q^{-\frac{1}{2}}\mr.
\end{equation}
The bilinear form $R$ is extended to all of $\mq^{\otimes 2}$ using
\begin{equation}
\label{mqR2}
\begin{split}
R(1 \otimes a) &= R(a \otimes 1) = R(1 \otimes d) = R(d \otimes 1) = 1,\\
R(1 \otimes b) &= R(b \otimes 1) = R(1 \otimes c) = R(c \otimes 1) = 0,
\end{split}
\end{equation}
and the axioms \eqref{axiom1} and \eqref{axiom2} (with $\alpha = Id$).

We now twist the cobraided bialgebra $\mq$ into an infinite family of cobraided Hom-bialgebras using Theorem ~\ref{thm:twist}.  Let $\lambda$ be any invertible  scalar in $\bk$.  Define the map $\alpha = \alpha_\lambda \colon \mq \to \mq$ by
\begin{equation}
\label{alphamq}
\alpha\abcd =
\begin{pmatrix}
\alpha(a) & \alpha(b)\\
\alpha(c) & \alpha(d)
\end{pmatrix} =
\begin{pmatrix}
a & \lambda b\\
\lambda^{-1}c & d
\end{pmatrix},
\end{equation}
extended multiplicatively to all of $\mq$.  The map $\alpha$ is indeed a well-defined algebra morphism because it preserves the six relations in \eqref{six}.  Moreover, it satisfies $\alpha^{\otimes 2} \circ \Delta = \Delta \circ \alpha$ by \eqref{deltamq}.  Thus, $\alpha$ is a bialgebra morphism on $\mq$.  It is invertible with inverse $\alpha_{\lambda^{-1}}$.  By Theorem ~\ref{thm:twist} we have a cobraided Hom-bialgebra
\begin{equation}
\label{mqalpha}
\mqalpha = (\mq,\mu_\alpha = \alpha \circ \mu,\Delta_\alpha = \Delta \circ \alpha,\alpha,R),
\end{equation}
where $\mu$ denotes the multiplication in $\mq$.  Using \eqref{deltaMq} and \eqref{alphamq} the twisted comultiplication $\Delta_\alpha = \alpha^{\otimes 2} \circ \Delta$ is given on the generators by
\begin{equation}
\label{deltaalphamq}
\Delta_\alpha\abcd = \abcdlambda \otimes \abcdlambda.
\end{equation}
It follows from \eqref{mqR}, \eqref{mqR2}, and \eqref{alphamq} that the bilinear form $R$ is $\alpha$-invariant, i.e, $R = R \circ \alpha^{\otimes 2}$.

Letting $\lambda$ run through the invertible scalars in $\bk$, we thus have an infinite, $1$-parameter family $\mqalpha$ of cobraided Hom-bialgebra twistings of the quantum group $\mq$.  Since $\mq$ is non-commutative (provided $q \not= 1$) and non-cocommutative, so is each cobraided Hom-bialgebra $\mqalpha$, which is also non-associative and non-coassociative.

In Example ~\ref{ex2:mq} below, we will consider the cobraided Hom-bialgebras $\mqalpha$ in a more general context.
\qed
\end{example}

The following two examples are both related to the quantum group $\mq$ of Example ~\ref{ex:mq}.  The general references are still \cite[Ch.7]{cp}, \cite[Ch.10]{es}, \cite[Ch.IV and VIII]{kassel}, \cite[Ch.4]{majid}, \cite{rft}, and \cite[Ch.3]{street}.

\begin{example}[\textbf{Hom-quantum general linear groups}]
\label{ex:gl}
With the same notations as in Example ~\ref{ex:mq}, define the \emph{quantum determinant}
\begin{equation}
\label{detq}
\detq = ad - q^{-1}bc.
\end{equation}
It follows from the relations \eqref{six} and \eqref{deltamq} that the quantum determinant lies in the center of $\mq$ and is group-like, i.e., $\Delta(\detq) = \detq \otimes \detq$.  The quantum general linear group is obtained from $\mq$ by adjoining an inverse to the quantum determinant.  More precisely, let $t$ be a variable that commutes with each of $a,b,c$, and $d$.  The \textbf{quantum general linear group} is defined as the algebra
\[
\gl = \mq[t]/(t\detq - 1).
\]
The formulas \eqref{deltamq}, \eqref{mqR}, and \eqref{mqR2} give $\gl$ the structure of a cobraided bialgebra (Example ~\ref{ex:dqtbialgebra}).

For any invertible scalar $\lambda \in \bk$, the map $\alpha = \alpha_\lambda$ defined in \eqref{alphamq} extends to a bialgebra automorphism on $\gl$ if we define $\alpha(t) = t$.  Indeed, we only need to see that $\alpha$ preserves the relation $t\detq = 1$.  This is true because $\alpha(1) = 1$, $\alpha(t) = t$, and
\begin{equation}
\label{alphadetq}
\begin{split}
\alpha(\detq)
&= \alpha(a)\alpha(d) - q^{-1}\alpha(b)\alpha(c)\\
&= ad - q^{-1}(\lambda b)(\lambda^{-1}c)\\
&= \detq.
\end{split}
\end{equation}
Thus, by Theorem ~\ref{thm:twist} we have a cobraided Hom-bialgebra
\begin{equation}
\label{glalpha}
\glalpha = (\gl, \mu_\alpha=\alpha\circ\mu, \Delta_\alpha=\Delta \circ \alpha, \alpha,R)
\end{equation}
with an $\alpha$-invariant Hom-cobraiding form $R$ \eqref{mqR}.

Letting $\lambda$ run through the invertible scalars in $\bk$, we thus have an infinite, $1$-parameter family $\glalpha$ of cobraided Hom-bialgebra twistings of the quantum general linear group $\gl$.  Since $\gl$ is non-commutative (provided $q \not= 1$) and non-cocommutative, so is each cobraided Hom-bialgebra $\glalpha$, which is also non-associative and non-coassociative.
\qed
\end{example}

\begin{example}[\textbf{Hom-quantum special linear groups}]
\label{ex:sl}
The \textbf{quantum special linear group} is the quotient algebra
\[
\slq = \mq/(\detq - 1),
\]
where $\detq$ is as in \eqref{detq}.  The formulas  \eqref{deltamq}, \eqref{mqR}, and \eqref{mqR2} give $\slq$ the structure of a cobraided bialgebra (Example ~\ref{ex:dqtbialgebra}).  For each invertible scalar $\lambda \in \bk$, the map $\alpha = \alpha_\lambda$ \eqref{alphamq} induces a bialgebra automorphism on $\slq$ because of \eqref{alphadetq}.  Thus, by Theorem ~\ref{thm:twist} we have a cobraided Hom-bialgebra
\begin{equation}
\label{slalpha}
\slqalpha = (\slq, \mu_\alpha=\alpha\circ\mu, \Delta_\alpha=\Delta \circ \alpha, \alpha,R)
\end{equation}
with an $\alpha$-invariant Hom-cobraiding form $R$ \eqref{mqR}.

Letting $\lambda$ run through the invertible scalars in $\bk$, we thus have an infinite, $1$-parameter family $\slqalpha$ of cobraided Hom-bialgebra twistings of the quantum special linear group $\slq$.  Since $\slq$ is non-commutative (provided $q \not= 1$) and non-cocommutative, so is each cobraided Hom-bialgebra $\slqalpha$, which is also non-associative and non-coassociative.\qed
\end{example}

There are interesting variations of the quantum group $\mq$ in Example ~\ref{ex:mq}.  In the next two examples, we demonstrate that our twisting procedure (Theorem ~\ref{thm:twist}) applies just as well to these non-standard quantum groups.

\begin{example}[\textbf{Multi-parameter Hom-quantum matrices}]
\label{ex:mpq}
This example is about a $2$-parameter version $\mpq$ of $\mq$ discussed in Example ~\ref{ex:mq}.  The references are \cite{dmmz,rft} and \cite[Example 4.2.12]{majid}.  Let $p$ and $q$ be invertible scalars in the ground field $\bk$.  For independent variables $a$, $b$, $c$, and $d$, consider the six relations:
\begin{equation}
\label{sixrel}
\begin{split}
ab &= q^{-1}ba,\ bd = p^{-1}db,\ ac = p^{-1}ca,\ cd = q^{-1}dc,\\
bc &= qp^{-1}cb,\ ad - da = (q^{-1} - p)bc.
\end{split}
\end{equation}
Then the algebra $\mpq$ is defined as the quotient $\bk\{a,b,c,d\}/\eqref{sixrel}$.  It becomes a bialgebra when equipped with the comultiplication $\Delta$ given in \eqref{deltaMq}.  In particular, if $p = q$, then $\mpq$ is the bialgebra $\mq$.  The bialgebra $\mpq$ becomes a cobraided bialgebra when equipped with the bilinear form $R \colon \mpq \otimes \mpq \to \bk$ determined by
\begin{equation}
\label{mpqR}
\mqr = \mpqr.
\end{equation}
It is extended to all of $\mpq^{\otimes 2}$ by \eqref{mqR2} and the axioms \eqref{axiom1} and \eqref{axiom2} (with $\alpha = Id$).

Given any invertible scalar $\lambda \in \bk$, there is a bialgebra morphism $\alpha = \alpha_\lambda \colon \mpq \to \mpq$ determined by \eqref{alphamq}.  It is well-defined because it preserves the six relations \eqref{sixrel}, and it is compatible with the comultiplication $\Delta$.  By Theorem ~\ref{thm:twist} we have a cobraided Hom-bialgebra
\begin{equation}
\label{mpqalpha}
\mpq_\alpha = (\mpq,\mu_\alpha = \alpha \circ \mu,\Delta_\alpha = \Delta \circ \alpha,\alpha,R),
\end{equation}
in which the twisted comultiplication $\Delta_\alpha$ is given on the generators as in \eqref{deltaalphamq}.  The bilinear form $R$ \eqref{mpqR} is $\alpha$-invariant.

Letting $\lambda$ run through the invertible scalars in $\bk$, we thus have an infinite, $1$-parameter family $\mpq_\alpha$ of cobraided Hom-bialgebra twistings of the $2$-parameter quantum group $\mpq$.  Since $\mpq$ is non-commutative (provided $p \not= 1$ or $q \not= 1$) and non-cocommutative, so is each cobraided Hom-bialgebra $\mpq_\alpha$, which is also non-associative and non-coassociative.
\qed
\end{example}

\begin{example}[\textbf{Non-standard Hom-quantum matrices}]
\label{ex:mq'}
This example is about a non-standard variation $\mqns$ of $\mq$.  The references are \cite{gjw,rft} and \cite[Example 4.2.13]{majid}.  Let $q$ be an invertible scalar in $\bk$ with $q^2 \not= -1$.  Consider the following relations:
\begin{equation}
\label{mq'rel}
\begin{split}
ab &= q^{-1}ba,\ bd = -qdb,\ ac = q^{-1}ca,\ cd = -qdc,\\
b^2 &= c^2 = 0,\ bc = cb,\ ad - da = (q^{-1}-q)bc.
\end{split}
\end{equation}
Then the algebra $\mqns$ is defined as the quotient $\bk\{a,b,c,d\}/\eqref{mq'rel}$.  It becomes a bialgebra when equipped with the comultiplication $\Delta$ in \eqref{deltaMq}.  Moreover, $\mqns$ becomes a cobraided bialgebra when equipped with the bilinear form $R \colon \mqns \otimes \mqns \to \bk$ determined by the so-called \emph{Alexander-Conway $R$-matrix}:
\begin{equation}
\label{mq'R}
\mqr = \mqnsmatrix
\end{equation}
It is extended to all of $\mqns^{\otimes 2}$ by \eqref{mqR2} and the axioms \eqref{axiom1} and \eqref{axiom2} (with $\alpha = Id$).

Given any invertible scalar $\lambda \in \bk$, there is a bialgebra morphism $\alpha = \alpha_\lambda \colon \mqns \to \mqns$ determined by \eqref{alphamq}.  It is well-defined because it preserves all the relations in \eqref{mq'rel}, and it is compatible with the comultiplication $\Delta$.  By Theorem ~\ref{thm:twist} we have a cobraided Hom-bialgebra
\begin{equation}
\label{mq'alpha}
\mqns_\alpha = (\mqns,\mu_\alpha = \alpha \circ \mu,\Delta_\alpha = \Delta \circ \alpha,\alpha,R),
\end{equation}
in which the twisted comultiplication $\Delta_\alpha$ is given on the generators as in \eqref{deltaalphamq}.  The bilinear form $R$ \eqref{mq'R} is $\alpha$-invariant.

Letting $\lambda$ run through the invertible scalars in $\bk$, we thus have an infinite, $1$-parameter family $\mqns_\alpha$ of cobraided Hom-bialgebra twistings of the non-standard quantum group $\mqns$.  Since $\mqns$ is non-commutative and non-cocommutative, so is each cobraided Hom-bialgebra $\mqns_\alpha$, which is also non-associative and non-coassociative.

We will revisit this example in section ~\ref{sec:coaction} when we discuss Hom versions of quantum geometry.
\qed
\end{example}

\section{FRT Hom-quantum groups}
\label{sec:frt}

In this section, we show that the twisting procedure in Theorem ~\ref{thm:twist} applies naturally to a large class of quantum groups, namely, the FRT quantum groups.  We will construct explicit bialgebra morphisms on each  FRT quantum group (Theorem ~\ref{thm:frt}), so Theorem ~\ref{thm:twist} can then be applied (Corollary ~\ref{cor:frt}).

Let us first recall some relevant definitions.  Throughout this section, we work over a field $\bk$ of characteristic $0$.


\begin{definition}
Let $V$ be a vector space and $\gamma \colon V^{\otimes 2} \to V^{\otimes 2}$ be a linear automorphism.  We say that $\gamma$ is an \textbf{$R$-matrix} if it satisfies the \textbf{Yang-Baxter equation} (YBE)
\begin{equation}
\label{ybe}
(Id \otimes \gamma) \circ (\gamma \otimes Id) \circ (Id \otimes \gamma) = (\gamma \otimes Id) \circ (Id \otimes \gamma) \circ (\gamma \otimes Id).
\end{equation}
\end{definition}

The YBE was introduced in the work of McGuire, Yang \cite{yang}, and Baxter \cite{baxter,baxter2} in statistical mechanics.  Drinfel'd's braided bialgebras \cite{dri87} and the dual objects of cobraided bialgebras (Example ~\ref{ex:dqtbialgebra}) were introduced partly to construct $R$-matrices in their (co)modules.  A Hom-type analogue of the YBE, called the Hom-Yang-Baxter equation (HYBE), was studied in \cite{yau5,yau6,yau8}.  Solutions of the HYBE associated to comodules of cobraided Hom-bialgebras will be discussed in section ~\ref{sec:hybe}.  A Hom version of the classical YBE (CYBE) \cite{dri87,skl1,skl2} was studied in \cite{yau7}.  The reader may consult \cite{perk} for discussions of the YBE, the CYBE, the QYBE \eqref{qybe}, and their uses in physics.

Suppose that $(A,R)$ is a cobraided bialgebra and that $V$ is an $A$-comodule with structure map $\rho$.  Write $\rho(v) = \sum v_A \otimes v_V$ for $v \in V$.  Then there is a canonically associated $R$-matrix $B_{V,V} \colon V^{\otimes 2} \to V^{\otimes 2}$ defined as
\begin{equation}
\label{BVV}
B_{V,V}(v \otimes w) = \sum R(w_A \otimes v_A) w_V \otimes v_V.
\end{equation}
The FRT construction \cite{rft} goes in the other direction.  It associates to each $R$-matrix on a finite dimensional vector space $V$ a cobraided bialgebra that coacts non-trivially on $V$, from which the $R$-matrix can be recovered as $B_{V,V}$ \eqref{BVV}.  Let us now recall the details of the FRT construction from \cite{rft}.  Expositions on the FRT construction can be found in, e.g, \cite[VIII.6]{kassel} and \cite[Ch.4.1]{majid}.


Let $V$ be a finite dimensional vector space with a basis $\{v_1, \ldots ,v_N\}$.  Let $\gamma \colon V^{\otimes 2} \to V^{\otimes 2}$ be an $R$-matrix on $V$ whose structure constants $\{c_{ij}^{mn}\}$ are determined by
\begin{equation}
\label{gamma}
\gamma(v_i \otimes v_j) = \sum_{m,n=1}^N c_{ij}^{mn} v_m \otimes v_n.
\end{equation}
Let $\{T_i^j\}_{i,j=1}^N$ be $N^2$ independent variables.

\begin{definition}[\textbf{FRT quantum groups}]
\label{def:frt}
Define the $\bk$-algebra
\begin{equation}
\label{agamma}
\agamma = \bk\{T_i^j \colon 1 \leq i,j \leq N\}/(C_{ij}^{mn}=0),
\end{equation}
where the relations (for $i,j,m,n \in \{1,\ldots,N\}$) are defined as
\begin{equation}
\label{frtrelations}
C_{ij}^{mn} = \sum_{k,l=1}^N c_{ij}^{kl}T_k^mT_l^n - \sum_{k,l=1}^N T_i^kT_j^lc_{kl}^{mn} = 0.
\end{equation}
The algebra $\agamma$ becomes a bialgebra when equipped with the comultiplication
\begin{equation}
\label{frtdelta}
\Delta(T_i^j) = \sum_{k=1}^N T_i^k \otimes T_k^j.
\end{equation}
The bialgebra $\agamma$ becomes a cobraided bialgebra when equipped with the cobraiding form $R \colon \agamma^{\otimes 2} \to \bk$ determined by
\begin{equation}
\label{frtR}
\begin{split}
R(T_i^m \otimes T_j^n) &= c_{ji}^{mn},\\
R(1 \otimes T_i^j) = R(T_i^j \otimes 1) &= \delta_{ij}.
\end{split}
\end{equation}
It is extended to all of $\agamma^{\otimes 2}$ using \eqref{axiom1} and \eqref{axiom2} (with $\alpha = Id$).  The cobraided bialgebra $\agamma$ is called the \textbf{FRT quantum group} associated to $\gamma$.
\end{definition}

The $R$-matrix $\gamma$ can be recovered from the $\agamma$-comodule structure $\rho \colon V \to \agamma \otimes V$ on $V$ defined by
\begin{equation}
\label{frtcomodule}
\rho(v_i) = \sum_{k=1}^N T_i^k \otimes v_k.
\end{equation}
Indeed, with this $\agamma$-comodule structure, it follows from \eqref{gamma} and \eqref{frtR} that $\gamma = B_{V,V}$ \eqref{BVV}.  Therefore, every $R$-matrix on a finite dimensional vector space $V$ must be of the form $B_{V,V}$ for some cobraided bialgebra coacting on $V$.

We would like to apply the twisting procedure in Theorem ~\ref{thm:twist} to the FRT quantum groups.  In the following result, we construct some explicit bialgebra endomorphisms on $\agamma$, for which Theorem ~\ref{thm:twist} can then be applied.

\begin{theorem}
\label{thm:frt}
Let $\agamma$ be the FRT quantum group associated to the $R$-matrix $\gamma \colon V^{\otimes 2} \to V^{\otimes 2}$ \eqref{gamma}.  Let $\lambda_i \in \bk$ be invertible scalars such that
\begin{equation}
\label{lambdac}
\lambda_i \lambda_j c_{ij}^{mn} = \lambda_m\lambda_n c_{ij}^{mn}
\end{equation}
for all $i,j,m,n \in \{1,\ldots,N\}$.  Then the following statements hold.
\begin{enumerate}
\item
There is a bialgebra morphism $\alpha_A \colon \agamma \to \agamma$ determined by
\begin{equation}
\label{frtalpha}
\alpha_A(T_i^j) = \lambda_i\lambda_j^{-1}T_i^j
\end{equation}
for all $i$ and $j$.
\item
The bilinear form $R$ \eqref{frtR} on $\agamma$ is $\alpha$-invariant, i.e., $R = R \circ \alpha_A^{\otimes 2}$.
\item
Consider the linear map $\alpha_V \colon V \to V$ defined as
\[
\alpha_V(v_i) = \lambda_iv_i
\]
for all $i$.  Then we have
\begin{equation}
\label{frtrhoalpha}
\begin{split}
(\alpha_A \otimes \alpha_V) \circ \rho &= \rho \circ \alpha_V,\\
\alpha_V^{\otimes 2} \circ \gamma &= \gamma \circ \alpha_V^{\otimes 2},
\end{split}
\end{equation}
where $\rho \colon V \to \agamma \otimes V$ is the $\agamma$-comodule structure map in \eqref{frtcomodule}.
\end{enumerate}
\end{theorem}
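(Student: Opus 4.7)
The plan is to verify the three assertions in order, with the hypothesis \eqref{lambdac} playing its role in each: the content of \eqref{lambdac} is that $\lambda_i\lambda_j = \lambda_m\lambda_n$ whenever the structure constant $c_{ij}^{mn}$ is nonzero, so that the scalar attached to a pair of ``lower'' indices agrees with the one attached to the corresponding pair of ``upper'' indices. This balance is precisely what makes the assignment $T_i^j \mapsto \lambda_i\lambda_j^{-1}T_i^j$ compatible with the defining relations.

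For (1), I would first show that $\alpha_A$ descends to a well-defined algebra endomorphism by checking $\alpha_A(C_{ij}^{mn}) = 0$ on the free-algebra representatives of \eqref{frtrelations}. Applying $\alpha_A$ to $\sum c_{ij}^{kl}T_k^mT_l^n$ produces the scalar $\lambda_k\lambda_l\lambda_m^{-1}\lambda_n^{-1}$ in each summand, and \eqref{lambdac} in the form $c_{ij}^{kl}\lambda_k\lambda_l = c_{ij}^{kl}\lambda_i\lambda_j$ pulls the common prefactor $\lambda_i\lambda_j\lambda_m^{-1}\lambda_n^{-1}$ outside. The second sum $\sum T_i^kT_j^l c_{kl}^{mn}$ receives the prefactor $\lambda_i\lambda_j$ directly from the outer indices, and \eqref{lambdac} in the form $c_{kl}^{mn}\lambda_k^{-1}\lambda_l^{-1} = c_{kl}^{mn}\lambda_m^{-1}\lambda_n^{-1}$ supplies the remaining $\lambda_m^{-1}\lambda_n^{-1}$, so the two sums acquire the same prefactor and $\alpha_A(C_{ij}^{mn}) = \lambda_i\lambda_j\lambda_m^{-1}\lambda_n^{-1}C_{ij}^{mn}$ vanishes in $\agamma$. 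Compatibility with $\Delta$ from \eqref{frtdelta} is then immediate from the telescoping identity $(\lambda_i\lambda_k^{-1})(\lambda_k\lambda_j^{-1}) = \lambda_i\lambda_j^{-1}$, which is independent of the summation index $k$.

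For (2), I verify $R \circ \alpha_A^{\otimes 2} = R$ first on generators. On a pair $T_i^m \otimes T_j^n$ the left-hand side is $\lambda_i\lambda_j\lambda_m^{-1}\lambda_n^{-1}c_{ji}^{mn}$, which equals $c_{ji}^{mn}$ by \eqref{lambdac} applied with the indices $(i,j)$ swapped to $(j,i)$; on pairs involving the unit, the factor $\lambda_i\lambda_j^{-1}\delta_{ij}$ collapses to $\delta_{ij}$. Because $R$ is extended to all of $\agamma^{\otimes 2}$ through \eqref{axiom1} and \eqref{axiom2}, $\alpha$-invariance propagates from the generators by induction on the lengths of the two arguments, using that $\alpha_A$ is both an algebra and a coalgebra morphism so that $\Delta(\alpha_A(z)) = \sum \alpha_A(z_1) \otimes \alpha_A(z_2)$ matches up the recursions on both sides. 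Part (3) is then a direct computation on the basis $\{v_i\}$: in $\rho(v_i) = \sum_k T_i^k \otimes v_k$ the factors $\lambda_i\lambda_k^{-1}$ from $\alpha_A(T_i^k)$ and $\lambda_k$ from $\alpha_V(v_k)$ combine to the $k$-independent scalar $\lambda_i$, while the intertwining $\alpha_V^{\otimes 2} \circ \gamma = \gamma \circ \alpha_V^{\otimes 2}$ is a single application of \eqref{lambdac} to $c_{ij}^{mn}v_m \otimes v_n$. The only genuinely delicate point is the inductive extension of $\alpha$-invariance in (2); all remaining verifications are bookkeeping with the scalars $\lambda_i$.
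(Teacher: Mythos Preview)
Your proposal is correct and follows essentially the same route as the paper's proof: verify that $\alpha_A$ preserves the relations $C_{ij}^{mn}$ via \eqref{lambdac}, check $\Delta$-compatibility on generators by the telescoping $\lambda_i\lambda_k^{-1}\cdot\lambda_k\lambda_j^{-1}=\lambda_i\lambda_j^{-1}$, verify $\alpha$-invariance of $R$ on the generating pairs, and handle part (3) by direct computation. You are in fact slightly more explicit than the paper about why $\alpha$-invariance extends from generators (the paper simply asserts that checking on $T_i^m\otimes T_j^n$, $1\otimes T_i^j$, $T_i^j\otimes 1$ suffices since $\alpha_A$ is a bialgebra morphism), so your identification of that inductive step as the one nontrivial point is well placed.
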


\begin{proof}
Consider the first assertion.  To show that $\alpha_A$ is an algebra morphism, we need to prove that $\alpha_A$ preserves the relations \eqref{frtrelations}.  This is proved by the following computation, where $\sum = \sum_{k,l=1}^N$:
\[
\begin{split}
\alpha_A\left(\sum c_{ij}^{kl}T_k^mT_l^n\right)
&= \sum c_{ij}^{kl}\lambda_k\lambda_l\lambda_m^{-1}\lambda_n^{-1}
T_k^mT_l^n\\
&= \lambda_i\lambda_j\lambda_m^{-1}\lambda_n^{-1}\sum c_{ij}^{kl}
T_k^mT_l^n\quad\text{by \eqref{lambdac}}\\
&= \lambda_i\lambda_j\lambda_m^{-1}\lambda_n^{-1} \sum T_i^kT_j^lc_{kl}^{mn}\quad\text{by \eqref{frtrelations}}\\
&= \sum \lambda_i\lambda_j\lambda_k^{-1}\lambda_l^{-1}T_i^kT_j^lc_{kl}^{mn}
\quad\text{by \eqref{lambdac}}\\
&= \alpha\left(\sum T_i^kT_j^lc_{kl}^{mn}\right).
\end{split}
\]
Since $\Delta$ \eqref{frtdelta} is also an algebra morphism, to show that $\alpha_A$ is compatible with $\Delta$ it suffices to consider the algebra generators $T_i^j$.  With $\sum = \sum_{k=1}^N$, we have
\[
\begin{split}
\Delta(\alpha_A(T_i^j))
&= \sum \lambda_i\lambda_j^{-1} T_i^k \otimes T_k^j\\
&= \sum \left(\lambda_i\lambda_k^{-1}T_i^k\right) \otimes \left(\lambda_k\lambda_j^{-1}T_k^j\right)\\
&= \alpha_A^{\otimes 2}(\Delta(T_i^j)).
\end{split}
\]
This shows that $\alpha_A$ is a bialgebra morphism.

Next we prove $R = R \circ \alpha_A^{\otimes 2}$.  Since $\alpha_A$ is a bialgebra morphism, it suffices to consider only $T_i^m \otimes T_j^n$, $1 \otimes T_i^j$, and $T_i^j \otimes 1$.  We have
\[
\begin{split}
R(\alpha_A(T_i^m) \otimes \alpha_A(T_j^n))
&= \lambda_i\lambda_j\lambda_m^{-1}\lambda_n^{-1}c_{ji}^{mn}\quad\text{by \eqref{frtR}}\\
&= c_{ji}^{mn} \quad\text{by \eqref{lambdac}}\\
&= R(T_i^m \otimes T_j^n).
\end{split}
\]
Likewise, we have
\[
\begin{split}
R(1 \otimes \alpha_A(T_i^j))
&= \lambda_i\lambda_j^{-1}\delta_{ij}\quad\text{by \eqref{frtR}}\\
&= \delta_{ij}\\
&= R(1 \otimes T_i^j).
\end{split}
\]
Similarly, we have $R(\alpha_A(T_i^j) \otimes 1) = \delta_{ij} = R(T_i^j \otimes 1)$.  This shows that $R$ \eqref{frtR} is $\alpha$-invariant.

For the last assertion, consider the first equality in \eqref{frtrhoalpha}.  We have
\[
\begin{split}
(\alpha_A \otimes \alpha_V)(\rho(v_i))
&= \sum \alpha_A(T_i^k) \otimes \alpha_V(v_k)\quad\text{by \eqref{frtcomodule}}\\
&= \sum (\lambda_i\lambda_k^{-1} T_i^k)\otimes (\lambda_k v_k)\\
&= \lambda_i\sum T_i^k \otimes v_i\\
&= \rho(\alpha_V(v_i)),
\end{split}
\]
proving the first equality in \eqref{frtrhoalpha}.  For the second equality in \eqref{frtrhoalpha}, we have
\[
\begin{split}
\gamma(\alpha_V(v_i) \otimes \alpha_V(v_j))
&= \sum \lambda_i\lambda_j c_{ij}^{mn} v_m \otimes v_n\quad\text{by \eqref{gamma}}\\
&= \sum \lambda_m\lambda_n c_{ij}^{mn} v_m \otimes v_n\quad\text{by \eqref{lambdac}}\\
&= \alpha_V^{\otimes 2}\left(\sum c_{ij}^{mn}v_m \otimes v_n\right)\\
&= \alpha_V^{\otimes 2}(\gamma(v_i \otimes v_j)).
\end{split}
\]
This proves that $\alpha_V^{\otimes 2} \circ \gamma = \gamma \circ \alpha_V^{\otimes 2}$.
\end{proof}

Applying Theorem ~\ref{thm:twist} to the cobraided bialgebra $\agamma$ and the bialgebra morphism $\alpha_A$ \eqref{frtalpha}, we obtain the following result.

\begin{corollary}
\label{cor:frt}
With the same hypotheses as in Theorem ~\ref{thm:frt}, there is a cobraided Hom-bialgebra
\[
\agammaalpha = (\agamma, \mu_\alpha = \alpha_A\circ\mu, \Delta_\alpha = \Delta \circ \alpha_A,\alpha_A,R).
\]
The Hom-cobraiding form $R$ \eqref{frtR} is $\alpha$-invariant.  The twisted comultiplication is given by
\[
\Delta_\alpha(T_i^j) = \lambda_i\lambda_j^{-1}\Delta(T_i^j) = \lambda_i\lambda_j^{-1}\sum_{k=1}^N T_i^k \otimes T_k^j.
\]
\end{corollary}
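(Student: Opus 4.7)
The plan is to obtain this corollary as a direct composition of Theorem \ref{thm:twist} and Theorem \ref{thm:frt}; essentially no new computation is required, because all of the work has already been carried out upstream.

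First, I would invoke Theorem \ref{thm:frt} to produce the bialgebra morphism $\alpha_A \colon \agamma \to \agamma$ determined on generators by $\alpha_A(T_i^j) = \lambda_i \lambda_j^{-1} T_i^j$. Note that the hypotheses of the corollary (the scaling condition \eqref{lambdac} on the invertible scalars $\lambda_i$) are precisely what is needed to apply Theorem \ref{thm:frt}. Theorem \ref{thm:frt} also delivers, as a byproduct, the $\alpha$-invariance $R = R \circ \alpha_A^{\otimes 2}$ of the FRT cobraiding form \eqref{frtR}.

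Next, I would feed the cobraided bialgebra $(\agamma, \mu, \Delta, R)$ together with the bialgebra endomorphism $\alpha_A$ into Theorem \ref{thm:twist}. That theorem produces the cobraided Hom-bialgebra
\[
\agammaalpha = (\agamma, \mu_\alpha = \alpha_A \circ \mu, \Delta_\alpha = \Delta \circ \alpha_A, \alpha_A, R),
\]
with the same bilinear form $R$. Since the Hom-cobraiding form is literally the same map as in the un-twisted case, its $\alpha$-invariance is inherited verbatim from Theorem \ref{thm:frt}, giving the second assertion.

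Finally, for the explicit formula for $\Delta_\alpha$ on generators, I would just compute on a generator $T_i^j$: using $\Delta_\alpha = \Delta \circ \alpha_A$ together with \eqref{frtalpha} and \eqref{frtdelta},
\[
\Delta_\alpha(T_i^j) = \Delta(\alpha_A(T_i^j)) = \lambda_i \lambda_j^{-1} \, \Delta(T_i^j) = \lambda_i \lambda_j^{-1} \sum_{k=1}^N T_i^k \otimes T_k^j,
\]
which is the stated formula. There is really no main obstacle here; the substantive technical step was verifying in Theorem \ref{thm:frt} that $\alpha_A$ preserves the FRT relations \eqref{frtrelations}, and once that is in place the corollary is just a packaging statement.
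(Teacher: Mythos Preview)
Your proposal is correct and matches the paper's approach exactly: the corollary is obtained by applying Theorem~\ref{thm:twist} to the cobraided bialgebra $\agamma$ together with the bialgebra morphism $\alpha_A$ furnished by Theorem~\ref{thm:frt}, and the $\alpha$-invariance of $R$ and the formula for $\Delta_\alpha$ on generators are read off directly.
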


We call $\agammaalpha$ an \textbf{FRT Hom-quantum group}.  We now observe that the cobraided Hom-bialgebras $\mqalpha$ \eqref{mqalpha}, $\mpq_\alpha$ \eqref{mpqalpha}, and $\mqnsalpha$ \eqref{mq'alpha} are all FRT Hom-quantum groups.

\begin{example}[\textbf{Hom-quantum matrices as FRT Hom-quantum groups}]
\label{ex2:mq}
The quantum group $\mq = \bk\{a,b,c,d\}/\eqref{six}$ in Example ~\ref{ex:mq} is actually the FRT quantum group associated to the $sl_2$ $R$-matrix $\gamma$ in \eqref{mqR}.  Here the $sl_2$ $R$-matrix is considered as a map $\gamma \colon V^{\otimes 2} \to V^{\otimes 2}$, where $V$ is $2$-dimensional with a basis $\{v_1,v_2\}$.  The vector space $V^{\otimes 2}$ is given the basis $\{v_1 \otimes v_1, v_1 \otimes v_2, v_2 \otimes v_1, v_2 \otimes v_2\}$.  The four algebra generators $T_i^j$ for $\agamma$ are identified with those for $\mq$ by
\begin{equation}
\label{id}
\Tmatrix = \abcd.
\end{equation}
To see that the relations for these two sets of generators also coincide, note that most of the structure constants $c_{ij}^{mn}$ for $\gamma$ \eqref{gamma} are $0$. The only non-zero $c_{ij}^{mn}$ are $c_{11}^{11} = c_{22}^{22} = q^{1/2}$, $c_{12}^{21} = c_{21}^{12} = q^{-1/2}$, and $c_{21}^{21} = q^{-1/2}(q - q^{-1})$.  Using these structure constants, a direct computation shows that most of the $16$ relations $C_{ij}^{mn} = 0$ \eqref{frtrelations} for $\agamma$ are redundant, except for $C_{11}^{12}$, $C_{12}^{11}$, $C_{12}^{12}$, $C_{12}^{21}$, $C_{12}^{22}$, and $C_{22}^{12}$.  With the identification \eqref{id}, these six relations are equivalent to the relations \eqref{six} for $\mq$.  The comultiplication for $\mq$ \eqref{deltaMq} is clearly equal to that for $\agamma$ \eqref{frtdelta}.  The cobraiding form $R$ \eqref{frtR} on $\agamma$ is equal to that on $\mq$ \eqref{mqR}.   See, e.g., \cite[VIII.7]{kassel} or \cite[Example 4.2.5]{majid} for the details of the identification between $\mq$ and $\agamma$.

Let $\lambda$ be an invertible scalar in $\bk$, and set $\lambda_1 = \lambda$ and $\lambda_2 = 1$.  We claim that \eqref{lambdac} holds, i.e., $\lambda_i\lambda_jc_{ij}^{mn} = \lambda_m\lambda_nc_{ij}^{mn}$.  As noted in the previous paragraph, in the $sl_2$ $R$-matrix $\gamma$ \eqref{mqR} there are only five non-zero $c_{ij}^{mn}$, corresponding to $(ijmn) = (1111)$, $(1221)$, $(2112)$, $(2121)$, and $(2222)$.  After canceling the non-zero $c_{ij}^{mn}$ from both sides, the corresponding conditions \eqref{lambdac} are: $\lambda_1\lambda_1 = \lambda_1\lambda_1$, $\lambda_1\lambda_2 = \lambda_2\lambda_1$, $\lambda_2\lambda_1 = \lambda_1\lambda_2$, $\lambda_2\lambda_1 = \lambda_2\lambda_1$, and  $\lambda_2\lambda_2 = \lambda_2\lambda_2$. They are all trivially true.  The map $\alpha_A \colon \agamma \to \agamma$ \eqref{frtalpha} takes the form
\[
\alpha_A\Tmatrix = \Tmatrixlambda = \abcdlambda,
\]
which coincides with $\alpha_\lambda \colon \mq \to \mq$ \eqref{alphamq}. Therefore, if we apply Corollary ~\ref{cor:frt} to $\agamma = \mq$ and $(\lambda_1,\lambda_2) = (\lambda,1)$, the resulting cobraided Hom-bialgebra $\agammaalpha$ is exactly $\mqalpha$ as obtained in Example ~\ref{ex:mq}.

Similarly, the cobraided Hom-bialgebras $\mpq_\alpha$ \eqref{mpqalpha} and $\mqnsalpha$ \eqref{mq'alpha} are also FRT Hom-quantum groups, where the $R$-matrices are \eqref{mpqR} and \eqref{mq'R}, respectively.  The proof that $\mpq$ and $\mqns$ are the FRT quantum groups associated to these $R$-matrices is similar to the $\mq$ case and can be found in, e.g., \cite[Examples 4.2.12 and 4.2.13]{majid}.  The proof that, with $(\lambda_1,\lambda_2) = (\lambda,1)$, Corollary ~\ref{cor:frt} gives rise to $\mpq_\alpha$ and $\mqnsalpha$ is exactly the same as in the previous paragraph.
\qed
\end{example}

\section{Cobraided Hom-bialgebras with injective twisting maps}
\label{sec:injective}

In this section, we give the second general method for constructing cobraided Hom-bialgebras (Definition ~\ref{def:dqt}).  In Theorem ~\ref{thm:twist} we twist the (co)multiplication in a cobraided bialgebra (Example ~\ref{ex:dqtbialgebra}) along a bialgebra endomorphism, keeping the bilinear form $R$ the same.  Here we start with a cobraided Hom-bialgebra and twist its Hom-cobraiding form $R$ by $\alpha^n$ when $\alpha$ is injective (Theorem ~\ref{thm:twistinj}), keeping the (co)multiplication the same.  The result is a sequence of cobraided Hom-bialgebras, one for each $n \geq 1$, with the same (co)multiplication and twisting map as the original cobraided Hom-bialgebra.  Applied to cobraided bialgebras and injective bialgebra endomorphisms, these two twisting procedures allow one to twist both the (co)multiplication and the bilinear form $R$ simultaneously (Corollary ~\ref{cor:twist}).  We illustrate this twisting procedure with quantum group bialgebras (Example ~\ref{ex:groupbi}) and the (integral) anyon-generating quantum groups (Examples ~\ref{ex:anyon} and ~\ref{ex:Z}).


\begin{theorem}
\label{thm:twistinj}
Let $(A,\mu,\Delta,\alpha,R)$ be a cobraided Hom-bialgebra with $\alpha$ injective.  Then
\[
\an = (A,\mu,\Delta,\alpha,\rn)
\]
is also a cobraided Hom-bialgebra for each $n \geq 1$, where $\rn = R \circ (\alpha^n \otimes \alpha^n)$.
\end{theorem}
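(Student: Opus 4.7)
The plan is to verify directly that with the new bilinear form $R^{\alpha^n} = R \circ (\alpha^n \otimes \alpha^n)$, the three cobraiding axioms \eqref{axiom1}, \eqref{axiom2}, \eqref{axiom3} still hold. Since the underlying Hom-bialgebra $(A,\mu,\Delta,\alpha)$ is unchanged, nothing else needs to be checked. The two tools I will repeatedly use are the multiplicativity $\alpha \circ \mu = \mu \circ \alpha^{\otimes 2}$ and the comultiplicativity $\alpha^{\otimes 2} \circ \Delta = \Delta \circ \alpha$, which together yield $\Delta(\alpha^n(z)) = \sum \alpha^n(z_1) \otimes \alpha^n(z_2)$ and $\alpha^n(x)\alpha^n(y) = \alpha^n(xy)$.

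For axiom \eqref{axiom1}, I would expand the left-hand side as
\[
R^{\alpha^n}(xy \otimes \alpha(z)) = R\bigl(\alpha^n(x)\alpha^n(y) \otimes \alpha(\alpha^n(z))\bigr),
\]
then apply \eqref{axiom1} for the original $R$ to the elements $\alpha^n(x), \alpha^n(y), \alpha^n(z)$, and finally re-expand $\Delta(\alpha^n(z))$ via comultiplicativity to recognize the result as $\sum R^{\alpha^n}(\alpha(x) \otimes z_1) R^{\alpha^n}(\alpha(y) \otimes z_2)$. Axiom \eqref{axiom2} is proved by the same three-step maneuver, with the roles of $\mu$ and $\Delta$ swapped.

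The only step where something nontrivial happens is axiom \eqref{axiom3}, and that is where the injectivity of $\alpha$ enters. Applying \eqref{axiom3} for $R$ to the pair $(\alpha^n(x), \alpha^n(y))$ and using comultiplicativity to write $\Delta(\alpha^n(x)) = \sum \alpha^n(x_1) \otimes \alpha^n(x_2)$ (and likewise for $y$), I would obtain
\[
\sum \alpha^n(y_1)\alpha^n(x_1) R\bigl(\alpha^n(x_2) \otimes \alpha^n(y_2)\bigr) = \sum R\bigl(\alpha^n(x_1) \otimes \alpha^n(y_1)\bigr) \alpha^n(x_2)\alpha^n(y_2).
\]
Multiplicativity then lets me factor $\alpha^n$ out of each side, giving
\[
\alpha^n\!\left(\sum y_1 x_1 R^{\alpha^n}(x_2 \otimes y_2)\right) = \alpha^n\!\left(\sum R^{\alpha^n}(x_1 \otimes y_1) x_2 y_2\right).
\]
Since $\alpha$ is injective by hypothesis, so is $\alpha^n$, and I can cancel $\alpha^n$ to deduce \eqref{axiom3} for $R^{\alpha^n}$. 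The injectivity is essential here precisely because \eqref{axiom3} involves values in $A$ rather than in $\bk$, so unlike \eqref{axiom1} and \eqref{axiom2} it cannot be rearranged to push the $\alpha^n$'s entirely onto the arguments of $R$.
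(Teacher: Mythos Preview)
Your proof is correct and follows essentially the same approach as the paper's: verify the three axioms \eqref{axiom1}--\eqref{axiom3} directly, using (co)multiplicativity of $\alpha$ for the first two and the injectivity of $\alpha$ for the third. The only cosmetic difference is that the paper reduces to the case $n=1$ by induction and then checks each axiom for $R^\alpha$, whereas you work with $R^{\alpha^n}$ for arbitrary $n$ in a single pass; the computations are otherwise identical.
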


\begin{proof}
By induction it suffices to consider the case $n = 1$.  We must check the three conditions \eqref{dqtaxioms} for $\aone$, i.e., with $\ralpha = R \circ \alpha^{\otimes 2}$ instead of $R$.

First we check \eqref{axiom1} for $\aone$.  Let $x$, $y$, and $z$ be elements in $A$.  Using the (co)multiplicativity of $\alpha$ and \eqref{axiom1} for $A$, we compute as follows:
\[
\begin{split}
\ralpha(xy \otimes \alpha(z))
&= R(\alpha(xy) \otimes \alpha^2(z))\\
&= R(\alpha(x)\alpha(y) \otimes \alpha^2(z))\\
&= R(\alpha^2(x) \otimes \alpha(z)_1)R(\alpha^2(y) \otimes \alpha(z)_2)\\
&= R(\alpha^2(x) \otimes \alpha(z_1))R(\alpha^2(y) \otimes \alpha(z_2))\\
&= \ralpha(\alpha(x)\otimes z_1)\ralpha(\alpha(y)\otimes z_2).
\end{split}
\]
This proves \eqref{axiom1} for $\aone$.  The condition \eqref{axiom2} for $\aone$ is checked by essentially the same computation, using \eqref{axiom2} for $A$ instead.  Notice that the injectivity assumption of $\alpha$ has not been used yet.

The condition \eqref{axiom3} for $\aone$ means the equality
\begin{equation}
\label{ax3a1}
\sum y_1x_1 \ralpha(x_2 \otimes y_2) = \sum \ralpha(x_1 \otimes y_1)x_2y_2
\end{equation}
in $A$.  Since $\alpha$ is injective, it follows that \eqref{ax3a1} holds if and only if the two sides are equal after applying $\alpha$.  Using the (co)multiplicativity and linearity of $\alpha$ and \eqref{axiom3} of $A$, we compute as follows, where the obvious summation signs are omitted:
\[
\begin{split}
\alpha\left(y_1x_1 \ralpha(x_2 \otimes y_2)\right)
&= \alpha(y_1)\alpha(x_1)R(\alpha(x_2)\otimes \alpha(y_2))\\
&= \alpha(y)_1\alpha(x)_1R(\alpha(x)_2 \otimes \alpha(y)_2)\\
&= R(\alpha(x)_1 \otimes \alpha(y)_1)\alpha(x)_2\alpha(y)_2\\
&= R(\alpha(x_1) \otimes \alpha(y_1))\alpha(x_2)\alpha(y_2)\\
&= \ralpha(x_1 \otimes y_1)\alpha(x_2y_2)\\
&= \alpha\left(\ralpha(x_1 \otimes y_1)x_2y_2\right).
\end{split}
\]
This proves that the images under $\alpha$ of the two sides in \eqref{ax3a1} are equal, as desired.
\end{proof}

Combining Theorems ~\ref{thm:twist} and ~\ref{thm:twistinj}, we obtain the following result.

\begin{corollary}
\label{cor:twist}
Let $(A,\mu,\Delta,R)$ be a cobraided bialgebra and $\alpha \colon A \to A$ be an injective bialgebra morphism.  Then
\[
A_\alpha^{(n)} = (A,\mu_\alpha,\Delta_\alpha,\alpha,\rn)
\]
is a cobraided Hom-bialgebra for each $n \geq 1$, where $\mu_\alpha = \alpha \circ \mu$, $\Delta_\alpha = \Delta \circ \alpha$, and $\rn = R \circ (\alpha^n \otimes \alpha^n)$.
\end{corollary}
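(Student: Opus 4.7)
The proof is essentially a two-step composition of the twisting results already established. My plan is to first apply Theorem~\ref{thm:twist} to obtain from the cobraided bialgebra $(A,\mu,\Delta,R)$ and the bialgebra morphism $\alpha$ the cobraided Hom-bialgebra
\[
A_\alpha = (A,\mu_\alpha,\Delta_\alpha,\alpha,R),
\]
where $\mu_\alpha = \alpha \circ \mu$ and $\Delta_\alpha = \Delta \circ \alpha$. This step uses no injectivity; it is purely the first twisting procedure, and it already installs the Hom-bialgebra structure $(A,\mu_\alpha,\Delta_\alpha,\alpha)$ and certifies that $R$ is a Hom-cobraiding form for it.

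Next I would feed this cobraided Hom-bialgebra $A_\alpha$ into Theorem~\ref{thm:twistinj}. The twisting map of $A_\alpha$ is still the same linear map $\alpha$, which is injective by hypothesis, so the input conditions are satisfied. Theorem~\ref{thm:twistinj} then yields, for each $n \geq 1$, a cobraided Hom-bialgebra
\[
(A_\alpha)^{(n)} = (A,\mu_\alpha,\Delta_\alpha,\alpha,\rn),
\]
with $\rn = R \circ (\alpha^n \otimes \alpha^n)$. This is exactly $A_\alpha^{(n)}$ as defined in the statement.

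There is really no obstacle here beyond a small bookkeeping check: the form $\rn$ appearing in Theorem~\ref{thm:twistinj} is built by composing the cobraiding form of the input structure with $\alpha^{\otimes n} \otimes \alpha^{\otimes n}$, and the input structure $A_\alpha$ has cobraiding form $R$, so the output is $R \circ (\alpha^n \otimes \alpha^n)$ as claimed. The case $n = 0$ (which is not part of the statement) would just recover $A_\alpha$ itself from Theorem~\ref{thm:twist}. Thus no new axiom verification is required; Corollary~\ref{cor:twist} follows immediately from the two preceding theorems with the injectivity of $\alpha$ used only in the second step.
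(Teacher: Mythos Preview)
Your proposal is correct and matches the paper's approach exactly: the paper simply states that the corollary follows by combining Theorems~\ref{thm:twist} and~\ref{thm:twistinj}, and your two-step argument (first twist the cobraided bialgebra to $A_\alpha$, then apply the injective-twisting theorem to pass from $R$ to $R^{\alpha^n}$) is precisely that combination.
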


We now give some examples illustrating Theorem ~\ref{thm:twist} and Corollary ~\ref{cor:twist}.


\begin{example}[\textbf{Hom-quantum group bialgebras}]
\label{ex:groupbi}
In this example, we apply Theorem ~\ref{thm:twist} and Corollary ~\ref{cor:twist} to cobraided bialgebra structures on group bialgebras \cite[Example 2.2.5]{majid}.  Let $G$ be a group, and let $\kg$ be its group bialgebra \cite[p.58, Example 2.4]{abe}.  Its comultiplication is determined by $\Delta(u) = u \otimes u$ for $u \in G$.

Let us first describe a (not-necessarily invertible) cobraiding form on $\kg$ (Example ~\ref{ex:dqtbialgebra}).  Thinking of $\kg \otimes \kg$ as $\bk G^2$, where $G^2 = G \times G$, a bilinear form on $\kg$ is equivalent to a function $R \colon G^2 \to \bk$.  The axioms \eqref{axiom1} and \eqref{axiom2} (with $\alpha = Id$) are equivalent to
\begin{equation}
\label{kgax12}
R(uv,w) = R(u,w)R(v,w)\quad\text{and}\quad R(u,vw) = R(u,w)R(u,v)
\end{equation}
for all $u,v,w \in G$.  The axiom ~\eqref{axiom3} is equivalent to
\[
vuR(u,v) = R(u,v)uv
\]
in $\kg$ for all $u,v \in G$, which in turn is equivalent to
\begin{equation}
\label{kgax3}
R(u,v) = 0 \quad\text{if}\quad uv \not= vu.
\end{equation}
Note that if $G$ is an abelian group, then \eqref{kgax3} holds automatically.   So a (not-necessarily invertible) cobraiding form on $\kg$ is equivalent to a function $R \colon G^2 \to \bk$ that satisfies \eqref{kgax12} and \eqref{kgax3}.  Fix such a cobraiding form $R$ for the rest of this example, and consider the cobraided bialgebra $(\kg,R)$.

Let $\alpha \colon G \to G$ be a group morphism.  Then it extends naturally to a bialgebra morphism $\alpha \colon \kg \to \kg$ with $\alpha\left(\sum_u c_u u\right) = \sum_u c_u\alpha(u)$.  By Theorem ~\ref{thm:twist} there is a cobraided Hom-bialgebra
\begin{equation}
\label{kgalpha}
\kg_\alpha = (\kg,\mu_\alpha = \alpha \circ \mu,\Delta_\alpha = \Delta \circ \alpha, \alpha,R),
\end{equation}
in which the twisted comultiplication is determined by $\Delta_\alpha(u) = \alpha(u) \otimes \alpha(u)$ for $u \in G$.

Suppose, in addition, that the group morphism $\alpha \colon G \to G$ is injective.  Then the induced bialgebra morphism $\alpha \colon \kg \to \kg$ is also injective.  By Corollary ~\ref{cor:twist} there is a cobraided Hom-bialgebra
\begin{equation}
\label{kgalphan}
\kg_\alpha^{(n)} = (\kg,\mu_\alpha,\Delta_\alpha,\alpha,\rn)
\end{equation}
for each $n \geq 1$, where $\rn = R \circ (\alpha^n \otimes \alpha^n)$.  In other words, the twisted Hom-cobraiding form $\rn$ is the function $G^2 \to \bk$ given by
\[
\rn(u,v) = R(\alpha^n(u),\alpha^n(v))
\]
for $u,v \in G$.  We will revisit this example in Example ~\ref{ex:groupbi2} below.
\qed
\end{example}

The following examples are interesting special cases of Example ~\ref{ex:groupbi}.

\begin{example}[\textbf{Anyon-generating Hom-quantum groups}]
\label{ex:anyon}
In this example, we apply Theorem ~\ref{thm:twist} and Corollary ~\ref{cor:twist} to the anyon-generating quantum group \cite{lm,majid92}, using part of the discussion in Example ~\ref{ex:groupbi}.  Here $\bk = \bC$ is the field of complex numbers and $G = \bZ/n$ is the additive cyclic group of order $n$.  We consider the group bialgebra $\bC\bZ/n$.

Let $\zeta$ denote the primitive $n$th root of unity $e^{2\pi i/n}$. Then the function $R \colon (\bZ/n)^2 \to \bC$ defined by
\[
R(a,b) = \zeta^{ab}
\]
for $a,b \in \{0, 1, \ldots , n-1\}$ satisfies the two conditions in \eqref{kgax12}.  Moreover, \eqref{kgax3} is automatically true because $\bZ/n$ is abelian.  Thus, we have a cobraided bialgebra $(\bC\bZ/n,R)$, called the \emph{anyon-generating quantum group} (\cite{lm,majid92} and \cite[p.53]{majid}).

Fix an integer $k \in \left\{1, \ldots , n-1\right\}$.  There is a bialgebra endomorphism $\alpha = \alpha_k$ on $\bC\bZ/n$, extending the group morphism
\[
\alpha_k \colon \bZ/n \to \bZ/n, \quad \alpha_k(1) = k.
\]
By Theorem ~\ref{thm:twist} there is a cobraided Hom-bialgebra
\begin{equation}
\label{cz}
(\bC\bZ/n)_\alpha = (\bC\bZ/n,\mu_\alpha = \alpha_k \circ \mu, \Delta_\alpha= \Delta \circ \alpha_k,\alpha_k,R),
\end{equation}
in which the twisted comultiplication is determined by $\Delta_\alpha(a) = ka \otimes ka$ for $a \in \bZ/n$.

Suppose, in addition, that $k$ and $n$ are relatively prime, so the map $\alpha_k$ is bijective (and hence injective).  By Corollary ~\ref{cor:twist} there is a cobraided Hom-bialgebra
\begin{equation}
\label{czt}
(\bC\bZ/n)_\alpha^{(t)} = (\bC\bZ/n,\mu_\alpha = \alpha_k \circ \mu, \Delta_\alpha= \Delta \circ \alpha_k,\alpha_k,\rt)
\end{equation}
for each $t \geq 1$, where $\rt = R \circ (\alpha^t \otimes \alpha^t)$.  More explicitly, the twisted Hom-cobraiding form $\rt$ is the function $(\bZ/n)^2 \to \bC$ given by
\begin{equation}
\label{rtzn}
\rt(a,b) = R(\alpha^t(a),\alpha^t(b)) = R(k^ta,k^tb) = \zeta^{k^{2t}ab}
\end{equation}
for $a,b \in \{0, 1, \ldots , n-1\}$.  We will revisit this example in Example ~\ref{ex:groupbi2} below.\qed
\end{example}

\begin{example}[\textbf{Integral anyon-generating Hom-quantum groups}]
\label{ex:Z}
This example is an integral version of Example ~\ref{ex:anyon}.  Here $G = \bZ$ is the infinite additive cyclic group, and $\bk$ can be any commutative ring of characteristic $0$.  Fix an invertible scalar $q$ in $\bk$.  Consider the group bialgebra $\bk\bZ$.

The function $R \colon \bZ^2 \to \bk$ defined by
\[
R(m,n) = q^{mn}
\]
satisfies \eqref{kgax12}, and \eqref{kgax3} is again trivially true because $\bZ$ is abelian.  Thus, we have a cobraided bialgebra $(\bk\bZ,R)$, denoted by $\bZ_q$ \cite[p.53 and p.537]{majid}.

Fix any non-zero integer $k$.  There is an injective bialgebra endomorphism $\alpha = \alpha_k$ on $\bk\bZ$, extending the injective group morphism
\[
\alpha_k \colon \bZ \to \bZ,\quad \alpha_k(1) = k.
\]
By Theorem ~\ref{thm:twist} and Corollary ~\ref{cor:twist}, there is a cobraided Hom-bialgebra
\[
(\bZ_q)_\alpha^{(t)} = (\bk\bZ,\mu_\alpha = \alpha_k \circ \mu,\Delta_\alpha = \Delta \circ \alpha_k,\alpha_k,\rt)
\]
for each $t \geq 0$, where $\rt = R \circ (\alpha^t \otimes \alpha^t)$ with $\alpha^0 = Id$.  As in \eqref{rtzn}, the twisted Hom-cobraiding form $\rt$ is the function $\bZ^2 \to \bk$ given by
\[
\rt(m,n) = q^{k^{2t}mn}
\]
for $m,n \in \bZ$.\qed
\end{example}

\section{Duality with braided Hom-bialgebras}
\label{sec:duality}

The purpose of this section is to establish a duality correspondence between finite dimensional braided Hom-bialgebras (Definition ~\ref{def:braided}) and finite dimensional cobraided Hom-bialgebras (Definition ~\ref{def:dqt}).  This result provides further examples of cobraided Hom-bialgebras.  As examples, we consider two finite dimensional quotients of the quantum enveloping algebra $\uq$ (Examples ~\ref{ex:uqr} and \ref{ex:smalluq}).  Throughout this section, $\bk$ is a field of characteristic $0$.

Let us first recall the definition of a braided Hom-bialgebra.

\begin{definition}
\label{def:braided}
A \textbf{braided Hom-bialgebra} is a quintuple $(A,\mu,\Delta,\alpha,R)$ in which $(A,\mu,\Delta,\alpha)$ is a Hom-bialgebra (Definition ~\ref{def:homas}) and $R \in A^{\otimes 2}$ satisfies the following three axioms:
\begin{subequations}
\label{qtaxioms}
\begin{align}
(\Delta \otimes \alpha)(R) &= R_{13}R_{23},\label{R1323}\\
(\alpha \otimes \Delta)(R) &= R_{13}R_{12},\label{R1312}\\
[(\tau\circ\Delta)(x)]R &= R\Delta(x)\label{RDelta}
\end{align}
\end{subequations}
for all $x \in A$.  Here if $R = \sum s_i \otimes t_i$, then $R_{13}R_{23}$ and $R_{13}R_{12}$ are defined as the $3$-tensors:
\begin{equation}
\label{R13}
\begin{split}
R_{13}R_{23} &= \sum \alpha(s_i) \otimes \alpha(s_j) \otimes t_it_j,\\
R_{13}R_{12} &= \sum s_is_j \otimes \alpha(t_j) \otimes \alpha(t_i).
\end{split}
\end{equation}
We call $R$ the \textbf{Hom-braiding element}.  We say that $R$ is \textbf{$\alpha$-invariant} if $\alpha^{\otimes 2}(R) = R$.
\end{definition}

Some remarks are in order.

\begin{remark}
\label{rk:braided}
\begin{enumerate}
\item
A \textbf{braided bialgebra} $(A,R)$, called a quasi-triangular bialgebra in \cite{dri87}, consists of a bialgebra $A$ and an element $R \in A^{\otimes 2}$ such that the three conditions in \eqref{qtaxioms} are satisfied for $\alpha = Id$.  The element $R$ is called the \textbf{universal $R$-matrix} or the \textbf{braiding element}.  In the literature, one usually assumes that $R$ is invertible.  However, in this paper and \cite{yau8}, we never have to use the invertibility of the braiding elements.
\item
A braided Hom-bialgebra is slightly more general than a quasi-triangular Hom-bialgebra in \cite{yau8}, which is also assumed to have a specific \emph{weak unit} $c \in A$.  The concept of a weak unit was introduced in \cite{fg2}. In a Hom-associative algebra (Definition ~\ref{def:homas}), a weak unit is an element $c \in A$ such that $\alpha(x) = cx = xc$ for all $x \in A$.  Given such a weak unit, one can define the $3$-tensors,
\[
R_{12} = R \otimes c,\quad R_{23} = c \otimes R,\quad R_{13} = (\tau \otimes Id)(R_{23}),
\]
generalizing the corresponding elements in the QYBE \eqref{qybe}.  In this case, the products $R_{13}R_{23}$ and $R_{13}R_{12}$ (computed in each tensor factor using the multiplication in $A$) coincide with the elements \eqref{R13} given in Definition ~\ref{def:braided}.

Dual to a weak unit is the concept of a weak counit.  For a Hom-coassociative coalgebra $(C,\Delta,\alpha)$, one defines a \emph{weak counit} as a linear map $\eta \colon C \to \bk$ such that
\[
(\eta \otimes Id) \circ \Delta = \alpha = (Id \otimes \eta) \circ \Delta
\]
as maps $C \to C$.  The duality correspondence (Theorem ~\ref{thm:dual}) restricts to a correspondence between finite dimensional quasi-triangular Hom-bialgebras and finite dimensional cobraided Hom-bialgebras with a weak counit.
\end{enumerate}
\end{remark}

To describe the duality correspondence of finite dimensional (co)braided Hom-bialgebras, first recall the double-dual isomorphism.  If $V$ is a finite dimensional $\bk$-vector space, then there is a linear isomorphism from $V$ to its double-dual $(V^*)^* = \Hom(V^*,\bk)$, given by evaluation:
\begin{equation}
\label{eval}
v \mapsto \langle -,v\rangle
\end{equation}
for $v \in V$.  With a slight abuse of notation, we write $\langle -,v\rangle \in (V^*)^*$ also as $v$.  Also recall from Example ~\ref{ex:duality} that, if $(A,\mu,\Delta,\alpha)$ is a finite dimensional Hom-bialgebra, then so is $(A^*,\Delta^*,\mu^*,\alpha^*)$, where $\Delta^*$, $\mu^*$, and $\alpha^*$ are defined in \eqref{deltadual} and \eqref{mudual}.

We are now ready for the promised duality correspondence.

\begin{theorem}
\label{thm:dual}
Let $(A,\mu,\Delta,\alpha)$ be a finite dimensional Hom-bialgebra and $R$ be an element in $A^{\otimes 2}$.  Then $(A,\mu,\Delta,\alpha,R)$ is a braided Hom-bialgebra if and only if $(A^*,\Delta^*,\mu^*,\alpha^*,R)$ is a cobraided Hom-bialgebra.  Moreover, $R = \alpha^{\otimes 2}(R) \in A^{\otimes 2}$ if and only if $R = R \circ (\alpha^*)^{\otimes 2} \in ((A^*)^{\otimes 2})^*$.
\end{theorem}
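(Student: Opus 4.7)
The plan is to exploit the double-dual isomorphism $A \cong (A^*)^*$ of \eqref{eval} to translate each braided axiom on $R \in A^{\otimes 2}$ into the corresponding cobraided axiom on the dual. Writing $R = \sum_i s_i \otimes t_i$, its image as a bilinear form on $A^*$ is $(\phi,\psi) \mapsto \sum_i \phi(s_i)\psi(t_i)$, and in finite dimensions this yields a bijection between elements of $A^{\otimes 2}$ and bilinear forms on $A^*$. In $(A^*,\Delta^*,\mu^*,\alpha^*)$ the roles of multiplication and comultiplication are swapped, so $\Delta$ on $A$ plays the role of the product $\Delta^*$ on $A^*$ and $\mu$ plays the role of the coproduct $\mu^*$. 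I would fix the Sweedler convention $\mu^*(\phi) = \sum \phi_1 \otimes \phi_2$, which is characterized by $\phi(ab) = \sum \phi_1(a)\phi_2(b)$ for all $a,b \in A$.

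First I would show \eqref{R1323} $\Leftrightarrow$ \eqref{axiom1}: pairing $(\Delta \otimes \alpha)(R)$ with $\phi \otimes \psi \otimes \chi \in (A^*)^{\otimes 3}$ gives $R(\Delta^*(\phi,\psi) \otimes \alpha^*(\chi))$ directly from \eqref{deltadual} and \eqref{mudual}, while pairing $R_{13}R_{23} = \sum_{i,j} \alpha(s_i) \otimes \alpha(s_j) \otimes t_it_j$ with the same tensor factors as
\[
\sum \bigl(\sum_i \alpha^*(\phi)(s_i)\chi_1(t_i)\bigr)\bigl(\sum_j \alpha^*(\psi)(s_j)\chi_2(t_j)\bigr) = \sum R(\alpha^*(\phi) \otimes \chi_1)R(\alpha^*(\psi) \otimes \chi_2).
\]
This is exactly \eqref{axiom1} in $(A^*,\Delta^*,\mu^*,\alpha^*,R)$ with $x,y,z$ replaced by $\phi,\psi,\chi$, and nondegeneracy of the finite-dimensional pairing gives the equivalence. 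A completely parallel computation (applying Sweedler to the first tensor factor instead of the third) shows \eqref{R1312} $\Leftrightarrow$ \eqref{axiom2}.

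For \eqref{RDelta} $\Leftrightarrow$ \eqref{axiom3} I would pair both sides of $[(\tau \circ \Delta)(x)]R = R\Delta(x)$ against $\phi \otimes \psi \in (A^*)^{\otimes 2}$ for arbitrary $x \in A$. Using $\phi(ab) = \sum \phi_1(a)\phi_2(b)$ and $(\psi_1 \otimes \phi_1)(\Delta(x)) = \Delta^*(\psi_1,\phi_1)(x)$, the LHS unfolds to $\sum \Delta^*(\psi_1,\phi_1)(x)\,R(\phi_2 \otimes \psi_2)$ and the RHS to $\sum R(\phi_1 \otimes \psi_1)\,\Delta^*(\phi_2,\psi_2)(x)$. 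Requiring this for all $x$ is exactly axiom \eqref{axiom3} in $A^*$. Finally, for the $\alpha$-invariance assertion, pairing $\alpha^{\otimes 2}(R) = \sum \alpha(s_i) \otimes \alpha(t_i)$ against $\phi \otimes \psi$ yields $\sum \alpha^*(\phi)(s_i)\alpha^*(\psi)(t_i) = R(\alpha^*(\phi) \otimes \alpha^*(\psi))$, so $\alpha^{\otimes 2}(R) = R$ in $A^{\otimes 2}$ is equivalent to $R \circ (\alpha^*)^{\otimes 2} = R$ as a bilinear form on $A^*$.

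The main obstacle I anticipate is purely notational: keeping the Sweedler indices straight in the \eqref{RDelta} $\Leftrightarrow$ \eqref{axiom3} step, where decompositions of \emph{both} $\phi$ and $\psi$ appear simultaneously and the twist $\tau$ swaps the roles of $x_1$ and $x_2$. Aside from this, every step is a routine application of \eqref{deltadual}--\eqref{mudual} combined with nondegeneracy of the finite-dimensional pairing between $A^{\otimes n}$ and $(A^*)^{\otimes n}$.
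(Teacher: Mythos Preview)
Your proposal is correct and follows essentially the same route as the paper's own proof: both arguments pair each side of the braided axioms \eqref{R1323}--\eqref{RDelta} against arbitrary tensors in $(A^*)^{\otimes n}$, unwind using \eqref{deltadual}--\eqref{mudual} and the Sweedler convention for $\mu^*$, and invoke nondegeneracy of the finite-dimensional pairing to obtain the equivalences with \eqref{axiom1}--\eqref{axiom3}. The only cosmetic difference is that the paper starts each computation from the cobraided side on $A^*$ and works back to the braided side on $A$, whereas you start from the braided side; the actual manipulations are identical.
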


\begin{proof}
To prove the first assertion, we must show that $R \in A^{\otimes 2}$ is a Hom-braiding element if and only if $R \in ((A^*)^{\otimes 2})^* \cong ((A^{\otimes 2})^*)^* \cong  A^{\otimes 2}$ is a Hom-cobraiding form (Definition ~\ref{def:dqt}).  We show that the three conditions in \eqref{qtaxioms} correspond to those in \eqref{dqtaxioms}.

Let us begin with \eqref{axiom1}.  Write $R = \sum s_i \otimes t_i$, and pick arbitrary elements $\phi, \psi, \chi \in A^*$.  Then the left-hand side in \eqref{axiom1} for $A^*$ is:
\[
\begin{split}
R(\Delta^*(\phi,\psi) \otimes \alpha^*(\chi))
&= \langle \Delta^*(\phi,\psi) \otimes \alpha^*(\chi), s_i \otimes t_i \rangle\quad\text{by \eqref{eval}}\\
&= \langle \Delta^*(\phi,\psi), s_i\rangle\langle \alpha^*(\chi), t_i\rangle\\
&= \langle \phi \otimes \psi, \Delta(s_i)\rangle \langle \chi, \alpha(t_i)\rangle\quad\text{by \eqref{deltadual}}\\
&= \langle \phi \otimes \psi \otimes \chi, (\Delta \otimes \alpha)(R)\rangle.
\end{split}
\]
Similarly, the right-hand side in \eqref{axiom1} for $A^*$ is:
\[
\begin{split}
R(\alpha^*(\phi) \otimes \chi_1)R(\alpha^*(\psi) \otimes \chi_2)
&= \langle \alpha^*(\phi) \otimes \chi_1, s_i \otimes t_i\rangle \langle \alpha^*(\psi) \otimes \chi_2, s_j \otimes t_j\rangle\quad\text{by \eqref{eval}}\\
&= \langle \phi, \alpha(s_i)\rangle \langle \psi, \alpha(s_j)\rangle \langle \mu^*(\chi), t_i \otimes t_j\rangle\\
&= \langle \phi \otimes \psi \otimes \chi, \alpha(s_i) \otimes \alpha(s_j) \otimes t_it_j\rangle\quad\text{by \eqref{mudual}}\\
&= \langle \phi \otimes \psi \otimes \chi, R_{13}R_{23}\rangle.
\end{split}
\]
We conclude that \eqref{axiom1} holds for $A^*$ if and only if
\[
\langle \phi \otimes \psi \otimes \chi, (\Delta \otimes \alpha)(R)\rangle = \langle \phi \otimes \psi \otimes \chi, R_{13}R_{23}\rangle
\]
for all $\phi,\psi,\chi \in A^*$, which is equivalent to \eqref{R1323} for $A$.

An argument similar to the one in the previous paragraph shows that \eqref{axiom2} for $A^*$ is equivalent to \eqref{R1312} for $A$.

Finally, for $\phi,\psi \in A^*$ and $x \in A$, the left-hand side in \eqref{axiom3} for $A^*$ (evaluated at $x$) is:
\[
\begin{split}
\langle \Delta^*(\psi_1,\phi_1)R(\phi_2 \otimes \psi_2), x\rangle
&= \langle \psi_1 \otimes \phi_1, \Delta(x)\rangle \langle \phi_2 \otimes \psi_2, s_i \otimes t_i\rangle \quad\text{by \eqref{deltadual} and \eqref{eval}}\\
&= \langle \mu^*(\phi), x_2 \otimes s_i\rangle \langle \mu^*(\psi), x_1 \otimes t_i\rangle\\
&= \langle \phi \otimes \psi, x_2s_i \otimes x_1t_i\rangle \quad\text{by \eqref{mudual}}\\
&= \langle \phi \otimes \psi, [(\tau \circ \Delta)(x)]R\rangle.
\end{split}
\]
Similarly, the right-hand side in \eqref{axiom3} for $A^*$ (evaluated at $x$) is:
\[
\begin{split}
\langle R(\phi_1 \otimes \psi_1)\Delta^*(\phi_2,\psi_2), x\rangle
&= \langle \phi_1 \otimes \psi_1, s_i \otimes t_i\rangle \langle \phi_2 \otimes \psi_2, x_1 \otimes x_2\rangle \quad\text{by \eqref{deltadual} and \eqref{eval}}\\
&= \langle \mu^*(\phi), s_i \otimes x_1\rangle \langle \mu^*(\psi), t_i \otimes x_2\rangle\\
&= \langle \phi \otimes \psi, s_ix_1 \otimes t_ix_2\rangle\quad\text{by \eqref{mudual}}\\
&= \langle \phi \otimes \psi, R\Delta(x)\rangle.
\end{split}
\]
Thus, \eqref{axiom3} holds for $A^*$ if and only if
\[
\langle \phi \otimes \psi, [(\tau \circ \Delta)(x)]R\rangle = \langle \phi \otimes \psi, R\Delta(x)\rangle
\]
for all $\phi,\psi \in A^*$ and $x \in A$.  This is equivalent to \eqref{RDelta} for $A$.  This finishes the proof of the first assertion.

For the second assertion, we have that $R = R \circ (\alpha^*)^{\otimes 2} \in ((A^*)^{\otimes 2})^*$ if and only if
\[
R(\phi \otimes \psi) = R(\alpha^*(\phi) \otimes \alpha^*(\psi))
\]
for all $\phi, \psi \in A^*$.  This in turn is equivalent to
\[
\langle \phi \otimes \psi, R\rangle = \langle \phi \otimes \psi, \alpha^{\otimes 2}(R)\rangle
\]
for all $\phi, \psi \in A^*$, which is equivalent to $R = \alpha^{\otimes 2}(R)$.
\end{proof}

By Theorem ~\ref{thm:dual} the linear dual of a finite dimensional braided Hom-bialgebra is a cobraided Hom-bialgebra, and vice versa.  Moreover, the notion of $\alpha$-invariance is preserved by this duality. This result gives us another way to construct (co)braided Hom-bialgebras.  We now give some examples of finite dimensional (co)braided Hom-bialgebras.

\begin{example}[\textbf{Hom-quantum group bialgebras revisited}]
\label{ex:groupbi2}
This is a continuation of Examples ~\ref{ex:groupbi} and ~\ref{ex:anyon}.  If $G$ is a finite group, then the group bialgebra $\kg$ is finite dimensional.  Thus, $\kg_\alpha$ \eqref{kgalpha} and $\kg_\alpha^{(n)}$ \eqref{kgalphan} for $n \geq 1$ are all finite dimensional cobraided Hom-bialgebras.  It follows from Theorem ~\ref{thm:dual} that their duals are braided Hom-bialgebras.  This applies, in particular, to the cobraided Hom-bialgebras $(\bC\bZ/n)_\alpha$ \eqref{cz} and $(\bC\bZ/n)_\alpha^{(t)}$ \eqref{czt} in Example ~\ref{ex:anyon}.\qed
\end{example}

In each of the following two examples, we construct an infinite family of  braided Hom-bialgebras from a finite dimensional quotient of the quantum enveloping algebra $\uq$ \cite{dri87,kr,skl3}.

\begin{example}[\textbf{Reduced Hom-quantum enveloping algebras I}]
\label{ex:uqr}
Let us first recall the braided bialgebra (Remark ~\ref{rk:braided}) $\uqr$ (\cite{majid92} and \cite[Example 3.4.1]{majid}).  The ground field is $\bC$.  Let $r > 1$ be an integer and $q \in \bC$ be a primitive $(2r)$th root of unity.  As a unital $\bC$-algebra, $\uqr$ is generated by variables $X_+$, $X_-$, $K$, and $K^{-1}$, where $K$ and $K^{-1}$ are inverses of each other, with relations:
\begin{equation}
\label{uqrrelations}
\begin{split}
KX_{\pm}K^{-1} &= q^{\pm 1}X_{\pm},\quad [X_+,X_-] = \frac{K^2 - K^{-2}}{q - q^{-1}},\\
K^{4r} &= 1,\quad X_{\pm}^r = 0.
\end{split}
\end{equation}
These relations imply that the algebra $\uqr$ is finite dimensional.  It becomes a bialgebra when equipped with the comultiplication determined by
\begin{equation}
\label{uqrdelta}
\Delta(X_{\pm}) = X_{\pm} \otimes K + K^{-1} \otimes X_{\pm},\quad \Delta(K^{\pm 1}) = K^{\pm 1} \otimes K^{\pm 1}.
\end{equation}
Moreover, $\uqr$ is a braided bialgebra whose braiding element is
\begin{equation}
\label{uqrR}
R = R_K \cdot \sum_{m=0}^{r-1} \frac{(1-q^{-2})^m}{(m)_{q^{-2}}!}(KX_+)^m \otimes (K^{-1}X_-)^m,
\end{equation}
where
\begin{equation}
\label{RK}
\begin{split}
R_K &= \frac{1}{4r}\sum_{a,b=0}^{4r-1} q^{-ab/2}K^a \otimes K^b,\\
(n)_{q^{-2}} &= 1 + q^{-2} + q^{-4} + \cdots + q^{-2(n-1)},\quad
(m)_{q^{-2}}! = (m)_{q^{-2}} \cdots (2)_{q^{-2}} (1)_{q^{-2}},
\end{split}
\end{equation}
with $(0)_{q^{-2}}! \equiv 1$.

To construct Hom versions of the braided bialgebra $\uqr$, let us recall a twisting procedure from \cite[Theorem 3.1]{yau8}: If $(A,\mu,\Delta,R)$ is a braided bialgebra and $\alpha \colon A \to A$ is a bialgebra morphism, then $A_\alpha = (A,\mu_\alpha = \alpha \circ \mu,\Delta_\alpha = \Delta \circ \alpha,R)$ is a braided Hom-bialgebra.  In other words, this is the dual of Theorem ~\ref{thm:twist}.

Now we apply this twisting procedure to the braided bialgebra $\uqr$.  Let $\lambda \in \bC$ be any invertible scalar.  Consider the map $\alpha = \alpha_\lambda \colon \uqr \to \uqr$ defined by
\begin{equation}
\label{alphauqr}
\alpha(K^{\pm 1}) = K^{\pm 1},\quad \alpha(X_{\pm}) = \lambda^{\pm 1}X_{\pm}.
\end{equation}
To see that $\alpha$ defines a bialgebra morphism on $\uqr$, first note that $\alpha$ preserves all the relations in \eqref{uqrrelations}.  So $\alpha$ defines an algebra morphism on $\uqr$.  It follows from \eqref{uqrdelta} that $\alpha^{\otimes 2} \circ \Delta$ coincides with $\Delta \circ \alpha$ when applied to the generators $X_{\pm}$ and $K^{\pm 1}$.  Since both $\alpha$ and $\Delta$ are algebra morphisms, this implies that $\alpha$ is also a coalgebra morphism, and hence a bialgebra morphism.

Thus, for each invertible scalar $\lambda$, we have a braided Hom-bialgebra
\begin{equation}
\label{uqralpha}
\uqralpha = (\uqr,\mu_\alpha = \alpha \circ \mu,\Delta_\alpha = \Delta \circ \alpha,\alpha,R).
\end{equation}
The twisted comultiplication is determined by
\[
\Delta_\alpha(X_{\pm}) = \lambda^{\pm1}(X_{\pm} \otimes K + K^{-1} \otimes X_{\pm}),\quad \Delta_\alpha(K^{\pm 1}) = K^{\pm 1} \otimes K^{\pm 1}.
\]
Letting $\lambda$ run through the invertible elements in $\bC$, we can consider $\uqralpha$ as an infinite, $1$-parameter family of braided Hom-bialgebra twistings of the braided bialgebra $\uqr$.  By Theorem ~\ref{thm:dual} the dual $\uqralpha^*$ of each $\uqralpha$ is a cobraided Hom-bialgebra.

Finally, observe that the element $R$ \eqref{uqrR} is $\alpha$-invariant.  Indeed, first consider the factor $R_K$ \eqref{RK}.   Since $\alpha(K) = K$, it follows that $\alpha^{\otimes 2}(R_K) = R_K$.  For the other factor in $R$, we have
\[
\begin{split}
\alpha((KX_+)^m) \otimes \alpha((K^{-1}X_-)^m)
&= (\lambda KX_+)^m \otimes (\lambda^{-1}K^{-1}X_-)^m\\
&= (KX_+)^m \otimes (K^{-1}X_-)^m.
\end{split}
\]
This shows that $\alpha^{\otimes 2}$ fixes the other factor in $R$ as well, so $R$ is $\alpha$-invariant.  By Theorem ~\ref{thm:dual} the bilinear form $R$ on the dual $\uqralpha^*$ is also $\alpha$-invariant.
\qed
\end{example}

\begin{example}[\textbf{Reduced Hom-quantum enveloping algebras II}]
\label{ex:smalluq}
This example is about another finite dimensional quotient $\smalluq$ of $\uq$ (\cite{lm} and \cite[Example 3.4.3]{majid}).  Working over $\bC$, let $l > 1$ be an odd integer and $q \in \bC$ be a primitive $l$th root of unity.  The unital $\bC$-algebra $\smalluq$ is generated by the variables $E$, $F$, $g$, and $g^{-1}$, where $g$ and $g^{-1}$ are inverses of each other, with relations:
\begin{equation}
\label{smalluqrel}
\begin{split}
gE &= q^2Eg,\quad gF = q^{-2}Fg,\quad [E,F] = \frac{g-g^{-1}}{q-q^{-1}},\\
g^l &= 1,\quad E^l = 0 = F^l.
\end{split}
\end{equation}
These relations imply that the algebra $\smalluq$ is finite dimensional. It becomes a bialgebra when equipped with the comultiplication determined by
\begin{equation}
\label{smalluqdelta}
\begin{split}
\Delta(E) &= E \otimes g + 1 \otimes E,\quad \Delta(F) = F \otimes 1 + g^{-1} \otimes F,\\
\Delta(g^{\pm 1}) &= g^{\pm 1} \otimes g^{\pm 1}.
\end{split}
\end{equation}
Moreover, the bialgebra $\smalluq$ is a braided bialgebra whose braiding element is
\begin{equation}
\label{smalluqR}
R = \left(\frac{1}{l} \sum_{a,b=0}^{l-1} q^{-2ab} g^a \otimes g^b\right)\left(\sum_{n=0}^{l-1} \frac{(q-q^{-1})^n}{(n)_{q^{-2}}!} E^n \otimes F^n\right),
\end{equation}
where $(n)_{q^{-2}}!$ is defined in \eqref{RK}.

Now we twist the braided bialgebra $\smalluq$ into an infinite family of braided Hom-bialgebras.  For each invertible scalar $\lambda \in \bC$, consider the map $\alpha = \alpha_\lambda \colon \smalluq \to \smalluq$ defined by
\[
\alpha(g^{\pm 1}) = g^{\pm 1},\quad \alpha(E) = \lambda E,\quad \alpha(F) = \lambda^{-1}F.
\]
It follows from the relations \eqref{smalluqrel} and the definition \eqref{smalluqdelta} of $\Delta$ that $\alpha$ is a well-defined bialgebra morphism.  As in the previous example, we thus have a braided Hom-bialgebra
\begin{equation}
\label{smalluqalpha}
\smalluq_\alpha = (\smalluq,\mu_\alpha = \alpha \circ\mu,\Delta_\alpha = \Delta \circ \alpha,\alpha,R),
\end{equation}
in which the twisted comultiplication is determined by
\[
\begin{split}
\Delta_\alpha(E) &= \lambda(E \otimes g + 1 \otimes E),\quad
\Delta_\alpha(F) = \lambda^{-1}(F \otimes 1 + g^{-1} \otimes F),\\
\Delta_\alpha(g^{\pm 1}) &= g^{\pm 1} \otimes g^{\pm 1}.
\end{split}
\]
Moreover, the element $R$ \eqref{smalluqR} is $\alpha$-invariant because $\alpha(g) = g$ and $\alpha(E^n) \otimes \alpha(F^n) = E^n \otimes F^n$.

Letting $\lambda$ run through the invertible elements in $\bC$, we can consider $\smalluq_\alpha$ as an infinite, $1$-parameter family of braided Hom-bialgebra twistings of the braided bialgebra $\smalluq$.  By Theorem ~\ref{thm:dual} the dual $\smalluq_\alpha^*$ of each $\smalluq_\alpha$ is a cobraided Hom-bialgebra with $\alpha$-invariant $R$.
\qed
\end{example}

\section{Solutions of the HYBE from cobraided Hom-bialgebras}
\label{sec:hybe}

Each comodule $V$ of a cobraided bialgebra has a canonical solution $B_{V,V}$ \eqref{BVV} of the Yang-Baxter equation \eqref{ybe}.  In this section, we generalize this important fact to the Hom setting.  In particular, we show that every comodule over a cobraided Hom-bialgebra with an $\alpha$-invariant Hom-cobraiding form (Definition ~\ref{def:dqt}) has a canonical solution of the Hom-Yang-Baxter equation (HYBE) (Corollary ~\ref{cor:comod}).  Let us first recall the HYBE.

\begin{definition}[\cite{yau5}]
\label{def:hybe}
Let $V$ be a $\bk$-module and $\alpha \colon V \to V$ be a linear map.  The \textbf{Hom-Yang-Baxter equation} (HYBE) is defined as
\begin{equation}
\label{hybe}
(\alpha \otimes B) \circ (B \otimes \alpha) \circ (\alpha \otimes B) = (B \otimes \alpha) \circ (\alpha \otimes B) \circ (B \otimes \alpha),
\end{equation}
where $B \colon V^{\otimes 2} \to V^{\otimes 2}$ is a bilinear map that commutes with $\alpha^{\otimes 2}$.  In this case, we say that $B$ is a solution of the HYBE for $(V,\alpha)$.
\end{definition}

The YBE \eqref{ybe} is the special case of the HYBE \eqref{hybe} in which $\alpha = Id$.  Solutions of the YBE are important because, among other properties, they give rise to braid group representations.  This is still true for solutions of the HYBE.  In fact, it is shown in \cite[Theorem 1.4]{yau5} that every solution of the HYBE gives rise to operators that satisfy the braid relations \cite{artin2,artin}.  When $\alpha$ and $B$ are both invertible, these operators give rise to a corresponding representation of the braid group.  Many examples of solutions of the HYBE can be found in \cite{yau5,yau6}. Moreover, it is shown in \cite{yau8} that each module over a braided Hom-bialgebra with an $\alpha$-invariant braiding element has a non-trivial solution of the HYBE.

In order to generalize the solutions of the YBE associated to comodules of cobraided bialgebras, we need a suitable notion of comodules in the Hom setting.

\begin{definition}
\label{def:comodule}
\begin{enumerate}
\item
A \textbf{Hom-module} is a pair $(V,\alpha)$ consisting of a $\bk$-module $V$ and a linear map $\alpha$.
\item
Let $(C,\Delta,\alpha_C)$ be a Hom-coassociative coalgebra (Definition ~\ref{def:homas}).  By a \textbf{$C$-comodule} we mean a Hom-module $(M,\alpha_M)$ together with a linear map $\rho \colon M \to C \otimes M$ (the structure map) such that
\begin{equation}
\label{homcoass}
(\Delta \otimes \alpha_M) \circ \rho = (\alpha_C \otimes \rho) \circ \rho
\end{equation}
and
\begin{equation}
\label{comult}
(\alpha_C \otimes \alpha_M) \circ \rho = \rho \circ \alpha_M.
\end{equation}
The conditions \eqref{homcoass} and \eqref{comult} are referred to as \emph{Hom-coassociativity} and \emph{comultiplicativity}, respectively.
\end{enumerate}
\end{definition}


\begin{example}
\begin{enumerate}
\label{ex:comod}
\item
If $(C,\Delta,\alpha)$ is a Hom-coassociative coalgebra, then $(C,\alpha)$ is a $C$-comodule with structure map $\rho = \Delta$.
\item
Let $(C,\Delta)$ be a coassociative coalgebra and $M$ be a $C$-comodule (in the usual sense) with structure map $\rho \colon M \to C \otimes M$.  Suppose that $\alpha_C \colon C \to C$ is a coalgebra morphism and $\alpha_M \colon M \to M$ is a linear map such that $(\alpha_C \otimes \alpha_M) \circ \rho = \rho \circ \alpha_M$. Then $(M,\alpha_M)$ is a $C_\alpha$-comodule with structure map $\rho_\alpha = \rho \circ \alpha_M$, where $C_\alpha$ is the Hom-coassociative coalgebra $(C,\Delta_\alpha = \Delta \circ \alpha_C,\alpha_C)$ (Example ~\ref{ex:homas}).\qed
\end{enumerate}
\end{example}

To state the main results of this section, we need some notations.  Let $(A,\mu,\Delta,\alpha,R)$ be a cobraided Hom-bialgebra (Definition ~\ref{def:dqt}), and let $V$ be an $A$-comodule with structure map $\rho_V \colon V \to A \otimes V$.  We use the notation $\rho_V(v) = \sum v_A \otimes v_V$ for $v \in V$.  Let $W$ be another $A$-comodule with structure map $\rho_W$.  Essentially as in \eqref{BVV}, define the map $B_{V,W} \colon V \otimes W \to W \otimes V$ by
\begin{equation}
\label{BVW}
B_{V,W}(v \otimes w) = \sum R(w_A \otimes v_A)w_W \otimes v_V
\end{equation}
for $v \in V$ and $w \in W$.  In other words, the map $B_{V,W}$ is defined by insisting that the following diagram be commutative:
\[
\SelectTips{cm}{10}
\xymatrix{
V \otimes W \ar[rr]^-{\tau} \ar[d]_-{B_{V,W}} & & W \otimes V \ar[rr]^-{\rho_W \otimes \rho_V} & & A \otimes W \otimes A \otimes V \ar[d]^-{Id \otimes \tau \otimes Id}\\
W \otimes V & & \bk \otimes W \otimes V \ar[ll]_-{\cong} & & A \otimes A \otimes W \otimes V \ar[ll]_-{R \otimes Id \otimes Id}.
}
\]
Recall that the Hom-cobraiding form $R$ is said to be $\alpha$-invariant if $R = R \circ \alpha^{\otimes 2}$.

\begin{theorem}
\label{thm:comod}
Let $(A,\mu,\Delta,\alpha,R)$ be a cobraided Hom-bialgebra with an $\alpha$-invariant Hom-cobraiding form $R$.  Let $U$, $V$, and $W$ be $A$-comodules.  Then:
\begin{enumerate}
\item
The operator $B_{V,W}$ \eqref{BVW} commutes with $\alpha$, i.e.,
\begin{equation}
\label{Balpha}
(\alpha_W \otimes \alpha_V) \circ B_{V,W}
= B_{V,W} \circ (\alpha_V \otimes \alpha_W).
\end{equation}
\item
The equality
\begin{equation}
\label{comodhybe}
\begin{split}
(\alpha_W \otimes B_{U,V}) & \circ (B_{U,W} \otimes \alpha_V) \circ (\alpha_U \otimes B_{V,W})\\
&= (B_{V,W} \otimes \alpha_U) \circ (\alpha_V \otimes B_{U,W}) \circ (B_{U,V} \otimes \alpha_W)
\end{split}
\end{equation}
holds.
\end{enumerate}
\end{theorem}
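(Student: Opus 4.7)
The plan is to verify the two parts separately: part (1) by a direct calculation, and part (2) by reduction to the OQHYBE of Theorem~\ref{thm:oqhybe}. For part (1), I would evaluate both sides on a simple tensor $v \otimes w$. The left-hand side unpacks to $\sum R(w_A \otimes v_A)\,\alpha_W(w_W) \otimes \alpha_V(v_V)$. For the right-hand side, the comultiplicativity axiom \eqref{comult} yields $\rho_V(\alpha_V(v)) = \sum \alpha_A(v_A) \otimes \alpha_V(v_V)$ and similarly for $W$, so that $B_{V,W}(\alpha_V(v) \otimes \alpha_W(w))$ expands to $\sum R(\alpha_A(w_A) \otimes \alpha_A(v_A))\,\alpha_W(w_W) \otimes \alpha_V(v_V)$. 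The $\alpha$-invariance of $R$ then collapses the two factors of $\alpha_A$ inside $R$, matching the two expressions.

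For part (2), I would apply both sides of \eqref{comodhybe} to a generic tensor $u \otimes v \otimes w$ and expand step by step using the definition of $B$. Each side produces a sum of simple tensors in $W \otimes V \otimes U$ weighted by a product of three $R$-factors, with the six $R$-arguments drawn from iterated coactions on $u$, $v$, and $w$. The left-hand side sees $\rho_W$ applied first (via $B_{V,W}$) and then the outer two maps bring in $\rho_U$ and $\rho_V$ with $\alpha$'s interspersed, while the right-hand side orders these coactions differently. To align the two expressions I would use comultiplicativity \eqref{comult} to pass the $\alpha$'s through the structure maps, and then the Hom-coassociativity \eqref{homcoass} to re-express the iterated coactions on a single factor as a single coaction composed with $\Delta$. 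After this reorganization the tensor parts in $W \otimes V \otimes U$ agree on both sides, while the scalar parts---each a product $R(\cdot\otimes\cdot)R(\cdot\otimes\cdot)R(\cdot\otimes\cdot)$ whose arguments are Sweedler components of $\rho_U(u)$, $\rho_V(v)$, $\rho_W(w)$---coincide with the two sides of \eqref{oqhybe} applied to $u_A, v_A, w_A$. The identity then follows from Theorem~\ref{thm:oqhybe}.

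The main obstacle will be the bookkeeping in part (2): the $\alpha$'s floating on the outer tensor factors and those appearing inside the $R$'s must be tracked precisely so that, after invoking Hom-coassociativity, one obtains the exact shape of the two sides of \eqref{oqhybe} rather than some variant. I would isolate an auxiliary statement that extracts the three-$R$ scalar identity induced by the two coaction axioms, so that the appeal to the OQHYBE becomes transparent; the $\alpha$-invariance of $R$ will be used throughout to absorb the extra $\alpha_A$'s generated by applying \eqref{comult} during the manipulation. Part (1), by contrast, is essentially mechanical and presents no real difficulty beyond a single application of each of comultiplicativity and $\alpha$-invariance.
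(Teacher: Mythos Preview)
Your proposal is correct and follows essentially the same route as the paper. Part (1) is handled identically. For part (2), the paper likewise expands both sides on a generic tensor $u\otimes v\otimes w$, uses comultiplicativity \eqref{comult} and Hom-coassociativity \eqref{homcoass} to rewrite iterated coactions via $\Delta$, and reduces the comparison to the OQHYBE \eqref{oqhybe} (with $\alpha$-invariance absorbing the stray $\alpha_A$'s); the only organizational difference is that the paper also invokes the cobraiding axioms \eqref{axiom1} and \eqref{axiom2} as intermediate collapse/re-expand steps and routes both sides through a common target expression $\theta$, whereas you plan a direct head-to-head comparison---but this is bookkeeping, not a different idea.
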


Before we give the proof of Theorem ~\ref{thm:comod}, let us first record the following special cases.

\begin{corollary}
\label{cor:comod}
Let $(A,\mu,\Delta,\alpha,R)$ be a cobraided Hom-bialgebra with an $\alpha$-invariant Hom-cobraiding form $R$, and let $(V,\alpha_V)$ be an $A$-comodule.  Then the map $B_{V,V} \colon V^{\otimes 2} \to V^{\otimes 2}$ \eqref{BVW} is a solution of the HYBE for $(V,\alpha_V)$.
\end{corollary}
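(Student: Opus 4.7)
The plan is to deduce the corollary by specializing Theorem~\ref{thm:comod} to $U = V = W$. Under this specialization, $\alpha_U$, $\alpha_V$, $\alpha_W$ all collapse to the single map $\alpha_V$, and $B_{U,V}$, $B_{V,W}$, $B_{U,W}$ all collapse to the single operator $B_{V,V}$ defined in \eqref{BVW}. Equation \eqref{Balpha} then becomes
\[
\alpha_V^{\otimes 2} \circ B_{V,V} = B_{V,V} \circ \alpha_V^{\otimes 2},
\]
which is the compatibility of $B_{V,V}$ with $\alpha_V^{\otimes 2}$ demanded by Definition~\ref{def:hybe}, while \eqref{comodhybe} becomes verbatim the HYBE \eqref{hybe} with $B = B_{V,V}$ and $\alpha = \alpha_V$. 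Thus the corollary is a direct consequence of the theorem.

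For orientation, I note how one would verify the corollary independently, without appealing to Theorem~\ref{thm:comod}. Commutation with $\alpha_V^{\otimes 2}$: apply the defining formula \eqref{BVW} and use the comultiplicativity axiom \eqref{comult}, which gives $\rho_V \circ \alpha_V = (\alpha \otimes \alpha_V) \circ \rho_V$. This inserts $\alpha$ into both slots of the scalar factor $R(w_A \otimes v_A)$ and $\alpha_V$ into both slots of the tensor factor $w_V \otimes v_V$; the former insertion is absorbed by the $\alpha$-invariance $R = R \circ \alpha^{\otimes 2}$, leaving precisely $\alpha_V^{\otimes 2} \circ B_{V,V}$. The hexagonal HYBE: three successive applications of $B_{V,V}$ (with $\alpha_V$ in the spectator slots) generate three $R$-factors whose arguments are $A$-components of $\rho_V$ applied to the three input vectors. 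Using Hom-coassociativity \eqref{homcoass} of $\rho_V$ to distribute the comultiplications and the $\alpha_V$'s across the three tensor positions, both sides of \eqref{hybe} reorganize into the two sides of the OQHYBE \eqref{oqhybe} satisfied by $R$ (Theorem~\ref{thm:oqhybe}). The main obstacle is bookkeeping: one must track exactly which $A$-component lands in which slot of each $R$-factor and where each $\alpha$ appears, so that the identification with \eqref{oqhybe} is unambiguous.
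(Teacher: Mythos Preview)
Your proof is correct and follows exactly the paper's own argument: specialize Theorem~\ref{thm:comod} to $U=V=W$, so that \eqref{Balpha} gives the commutation $\alpha_V^{\otimes 2}\circ B_{V,V}=B_{V,V}\circ\alpha_V^{\otimes 2}$ and \eqref{comodhybe} becomes the HYBE \eqref{hybe}. The additional paragraph outlining a direct verification via \eqref{comult}, \eqref{homcoass}, and the OQHYBE \eqref{oqhybe} is accurate and in fact sketches the content of Lemmas~\ref{lem1:comod} and~\ref{lem2:comod}, which is how the paper proves Theorem~\ref{thm:comod} itself.
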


\begin{proof}
Setting $U = V = W$ in Theorem ~\ref{thm:comod}, the equalities \eqref{Balpha} and \eqref{comodhybe} say that $B_{V,V}$ commutes with $\alpha_V^{\otimes 2}$ and that $B_{V,V}$ satisfies the HYBE \eqref{hybe}, respectively.
\end{proof}

In other words, every comodule of a cobraided Hom-bialgebra with $\alpha$-invariant $R$ has a canonical solution $B_{V,V}$ of the HYBE. We have given a number of examples of cobraided Hom-bialgebras with an $\alpha$-invariant Hom-cobraiding form, for which Corollary ~\ref{cor:comod} can be applied.   These cobraided Hom-bialgebras include the Hom-quantum matrices $\mqalpha$ \eqref{mqalpha}, the Hom-quantum general linear groups $\glalpha$ \eqref{glalpha}, the Hom-quantum special linear groups $\slqalpha$ \eqref{slalpha}, the $2$-parameter Hom-quantum groups $\mpq_\alpha$ \eqref{mpqalpha}, the non-standard Hom-quantum groups $\mqnsalpha$ \eqref{mq'alpha}, and the duals of the reduced Hom-quantum groups $\uqr_\alpha$ \eqref{uqralpha} and $\smalluq_\alpha$ \eqref{smalluqalpha}.

The following result is a useful special case of Corollary ~\ref{cor:comod}.  We will use it in Examples ~\ref{ex:shqsym} - ~\ref{ex:mhqsym}.

\begin{corollary}
\label{cor2:comod}
Let $(A,R)$ be a cobraided bialgebra and $\alpha \colon A \to A$ be a bialgebra morphism such that $R = R \circ \alpha^{\otimes 2}$.  Let $V$ be an $A$-comodule with structure map $\rho$ and $\alpha_V \colon V \to V$ be a linear map such that $(\alpha \otimes \alpha_V) \circ \rho = \rho \circ \alpha_V$.  Then the map $B_\alpha \colon V^{\otimes 2} \to V^{\otimes 2}$ defined by
\begin{equation}
\label{Balphacomod}
B_\alpha(v \otimes w) = \sum R(w_A \otimes v_A)\alpha_V(w_V) \otimes \alpha_V(v_V)
\end{equation}
for $v,w \in V$ is a solution of the HYBE for $(V,\alpha_V)$.
\end{corollary}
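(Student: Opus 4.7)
The plan is to reduce this statement to Corollary~\ref{cor:comod} by invoking two earlier constructions: the twisting of a cobraided bialgebra into a cobraided Hom-bialgebra (Theorem~\ref{thm:twist}) and the twisting of an ordinary comodule into a Hom-comodule (Example~\ref{ex:comod}~(2)). The strategy is to package the data $(A, R, \alpha, V, \rho, \alpha_V)$ into a cobraided Hom-bialgebra $A_\alpha$ with an $\alpha$-invariant Hom-cobraiding form together with an $A_\alpha$-comodule structure on $(V,\alpha_V)$, and then verify that the canonical HYBE-solution $B_{V,V}$ produced by Corollary~\ref{cor:comod} coincides with the map $B_\alpha$ defined in \eqref{Balphacomod}.

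The first step is to apply Theorem~\ref{thm:twist} to the cobraided bialgebra $(A,\mu,\Delta,R)$ and the bialgebra morphism $\alpha$, obtaining the cobraided Hom-bialgebra $A_\alpha = (A,\mu_\alpha,\Delta_\alpha,\alpha,R)$. Because the hypothesis $R = R\circ\alpha^{\otimes 2}$ is precisely $\alpha$-invariance, this Hom-cobraiding form is $\alpha$-invariant in $A_\alpha$. The second step is to apply Example~\ref{ex:comod}~(2) (which requires exactly the compatibility $(\alpha\otimes\alpha_V)\circ\rho = \rho\circ\alpha_V$ that is assumed) to turn $(V,\alpha_V)$ into an $A_\alpha$-comodule with structure map $\rho_\alpha = \rho\circ\alpha_V$. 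Corollary~\ref{cor:comod} then immediately produces a solution $B_{V,V}\colon V^{\otimes 2}\to V^{\otimes 2}$ of the HYBE for $(V,\alpha_V)$.

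It remains to identify $B_{V,V}$ with $B_\alpha$. For $v\in V$, the compatibility assumption gives
\[
\rho_\alpha(v) = \rho(\alpha_V(v)) = \sum \alpha(v_A)\otimes \alpha_V(v_V),
\]
so in the notation of \eqref{BVW} (applied to the $A_\alpha$-comodule $V$) the ``$A$-component'' of $v$ is $\alpha(v_A)$ and the ``$V$-component'' is $\alpha_V(v_V)$. Substituting into \eqref{BVW} and using $\alpha$-invariance of $R$, we obtain
\[
B_{V,V}(v\otimes w) = \sum R(\alpha(w_A)\otimes\alpha(v_A))\,\alpha_V(w_V)\otimes\alpha_V(v_V)
= \sum R(w_A\otimes v_A)\,\alpha_V(w_V)\otimes\alpha_V(v_V),
\]
which is exactly $B_\alpha(v\otimes w)$. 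Therefore $B_\alpha = B_{V,V}$ satisfies the HYBE for $(V,\alpha_V)$, as desired.

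There is no genuine obstacle here: once the hypotheses are matched up with those of Theorem~\ref{thm:twist} and Example~\ref{ex:comod}~(2), the conclusion follows from Corollary~\ref{cor:comod}. The only bookkeeping to watch carefully is the identification of the ``$A$-'' and ``$V$-''components of $\rho_\alpha(v)$, where the $\alpha$-invariance of $R$ is what absorbs the extra factors of $\alpha$ coming from $\rho_\alpha = \rho\circ\alpha_V$.
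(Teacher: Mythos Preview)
Your proposal is correct and follows essentially the same approach as the paper: apply Theorem~\ref{thm:twist} and Example~\ref{ex:comod}(2) to place yourself in the setting of Corollary~\ref{cor:comod}, then use $\alpha$-invariance of $R$ together with $\rho_\alpha(v) = \sum \alpha(v_A)\otimes\alpha_V(v_V)$ to identify $B_{V,V}$ with $B_\alpha$.
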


\begin{proof}
We know that $(V,\alpha_V)$ is an $A_\alpha$-comodule with structure map $\rho_\alpha = \rho \circ \alpha_V$ (Example ~\ref{ex:comod}), where $A_\alpha$ is the cobraided Hom-bialgebra $(A,\mu_\alpha,\Delta_\alpha,\alpha,R)$ (Theorem ~\ref{thm:twist}).  Since $R$ is $\alpha$-invariant, by Corollary ~\ref{cor:comod} there is a solution $B_{V,V}$ of the HYBE for $(V,\alpha_V)$.  The operator $B_{V,V}$ takes the form $B_\alpha$ \eqref{Balphacomod} by the $\alpha$-invariance of $R$ and because $\rho_\alpha(v) = \sum \alpha(v_A) \otimes \alpha_V(v_V)$ for $v \in V$.
\end{proof}

In Corollary ~\ref{cor2:comod}, if $\alpha = Id \colon A \to A$, then $R = R \circ \alpha^{\otimes 2}$ holds automatically.  Moreover, the condition $(Id \otimes \alpha_V) \circ \rho = \rho \circ \alpha_V$ means exactly that $\alpha_V$ is a morphism of $A$-comodules.  Therefore, we have the following special case of Corollary ~\ref{cor2:comod}.

\begin{corollary}
\label{cor3:comod}
Let $(A,R)$ be a cobraided bialgebra, $V$ be an $A$-comodule, and $\alpha_V \colon V \to V$ be a morphism of $A$-comodules.  Then the map $B_\alpha$ \eqref{Balphacomod} is a solution of the HYBE for $(V,\alpha_V)$.
\end{corollary}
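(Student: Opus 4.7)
The plan is to obtain this as the special case $\alpha = Id_A$ of Corollary \ref{cor2:comod}, which has already been established. The paragraph immediately preceding the statement of Corollary \ref{cor3:comod} in the excerpt essentially sketches the reduction, so the proof proposal amounts to verifying that the two hypotheses of Corollary \ref{cor2:comod} are automatic in this setting.

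First I would set $\alpha = Id_A$ in Corollary \ref{cor2:comod}. Then the $\alpha$-invariance hypothesis $R = R \circ \alpha^{\otimes 2}$ becomes $R = R \circ Id^{\otimes 2} = R$, which holds trivially. Next I would check the compatibility hypothesis $(\alpha \otimes \alpha_V) \circ \rho = \rho \circ \alpha_V$. With $\alpha = Id_A$ this reads $(Id_A \otimes \alpha_V) \circ \rho = \rho \circ \alpha_V$, which is precisely the definition of $\alpha_V$ being a morphism of $A$-comodules, so it holds by hypothesis.

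Having verified both hypotheses of Corollary \ref{cor2:comod}, I would conclude that the associated operator $B_\alpha \colon V^{\otimes 2} \to V^{\otimes 2}$, defined by the formula \eqref{Balphacomod}, is a solution of the HYBE for $(V,\alpha_V)$. Since the formula \eqref{Balphacomod} does not depend on $\alpha$ (only on $\alpha_V$, $\rho$, and $R$), this is exactly the operator appearing in the statement of Corollary \ref{cor3:comod}.

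There is no genuine obstacle here; the only thing to note is that this is a clean instance of the general principle that cobraided bialgebras are cobraided Hom-bialgebras with trivial twisting map, so any comodule endomorphism $\alpha_V$ automatically extends the classical solution $B_{V,V}$ of the YBE \eqref{BVV} to a solution of the HYBE after post-composition with $\alpha_V \otimes \alpha_V$.
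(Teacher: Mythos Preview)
Your proof is correct and follows exactly the paper's approach: the paper derives Corollary~\ref{cor3:comod} as the special case $\alpha = Id_A$ of Corollary~\ref{cor2:comod}, noting (in the paragraph immediately preceding the statement) that $R = R \circ Id^{\otimes 2}$ is trivial and that $(Id \otimes \alpha_V)\circ\rho = \rho\circ\alpha_V$ is precisely the condition that $\alpha_V$ be an $A$-comodule morphism.
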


The rest of this section is devoted to the proof of Theorem ~\ref{thm:comod}.

\begin{proof}[Proof of Theorem ~\ref{thm:comod}]
In order to simplify the typography, we will often omit the subscripts in the maps $\alpha_V$, etc.  To prove \eqref{Balpha}, pick $v \in V$, $w \in W$ and compute as follows:
\[
\begin{split}
(\alpha_W \otimes \alpha_V)(B_{V,W}(v \otimes w))
&= R(\alpha(w_A) \otimes \alpha(v_A)) \alpha(w_W) \otimes \alpha(v_V)\quad\text{by $\alpha$-invariance of $R$}\\
&= R(\alpha(w)_A \otimes \alpha(v)_A) \alpha(w)_W \otimes \alpha(v)_V \quad\text{by comultiplicativity \eqref{comult}}\\
&= B_{V,W}((\alpha_V \otimes \alpha_W)(v \otimes w)).
\end{split}
\]
This proves that $B_{V,W}$ commutes with $\alpha$.

To prove \eqref{comodhybe}, let $z = u \otimes v \otimes w$ be a typical generator in $U \otimes V \otimes W$.  In the two lemmas below, we will show that both sides in \eqref{comodhybe}, when applied to $z$, give
\begin{equation}
\label{theta}
\theta = R\left((v_A)_1 \otimes (u_A)_1\right)R\left((w_A)_1 \otimes (u_A)_2\right)R\left((w_A)_2 \otimes (v_A)_2\right)z'
\end{equation}
in $W \otimes V \otimes U$, where
\begin{equation}
\label{z'}
z' = \alpha^2(w_W) \otimes \alpha^2(v_V) \otimes \alpha^2(u_U).
\end{equation}
In the $3$-tensor $\theta$, the subscripts $1$ and $2$ refer to the comultiplication in $A$, i.e., $\Delta(u_A) = \sum (u_A)_1 \otimes (u_A)_2$, and so on.  It remains to prove the two lemmas below.
\end{proof}

\begin{lemma}
\label{lem1:comod}
We have
\[
(\alpha_W \otimes B_{U,V}) \circ (B_{U,W} \otimes \alpha_V) \circ (\alpha_U \otimes B_{V,W})(u \otimes v \otimes w) = \theta,
\]
where $\theta$ is defined in \eqref{theta}.
\end{lemma}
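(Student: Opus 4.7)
The plan is to compute $(\alpha_W \otimes B_{U,V}) \circ (B_{U,W} \otimes \alpha_V) \circ (\alpha_U \otimes B_{V,W})$ applied to $u \otimes v \otimes w$ one operator at a time, collapse the iterated Sweedler data via the Hom-coassociativity \eqref{homcoass} of the comodule structures together with the $\alpha$-invariance of $R$, and then invoke the OQHYBE \eqref{oqhybe} to match $\theta$.

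First, applying $\alpha_U \otimes B_{V,W}$ produces $\sum R(w_A \otimes v_A)\,\alpha(u) \otimes w_W \otimes v_V$. Next, $B_{U,W} \otimes \alpha_V$ acts on the first two tensor factors; comultiplicativity \eqref{comult} gives $\rho_U(\alpha(u)) = \alpha(u_A) \otimes \alpha(u_U)$, so the output is $\sum R(w_A \otimes v_A)R((w_W)_A \otimes \alpha(u_A))(w_W)_W \otimes \alpha(u_U) \otimes \alpha(v_V)$. Finally, $\alpha_W \otimes B_{U,V}$ acts on $\alpha(u_U) \otimes \alpha(v_V)$; another use of \eqref{comult} together with $\alpha$-invariance of $R$ absorbs the extra $\alpha$s appearing inside $R$ and yields
\[
\sum R(w_A \otimes v_A)R((w_W)_A \otimes \alpha(u_A))R((v_V)_A \otimes (u_U)_A)\,\alpha((w_W)_W) \otimes \alpha((v_V)_V) \otimes \alpha((u_U)_U).
\]

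Second, Hom-coassociativity \eqref{homcoass} for each of $u,v,w$ and the respective comodule $X \in \{U,V,W\}$ reads
\[
\sum \alpha(x_A) \otimes (x_X)_A \otimes (x_X)_X \;=\; \sum (x_A)_1 \otimes (x_A)_2 \otimes \alpha(x_X).
\]
After first using $\alpha$-invariance to replace $R(w_A \otimes v_A)$ by $R(\alpha(w_A) \otimes \alpha(v_A))$, every $A$-argument in an $R$ has the form $\alpha(x_A)$ or $(x_X)_A$, and every comodule output has the form $\alpha((x_X)_X) = \alpha^2(x_X)$. Substituting via Hom-coassociativity separately for $u$, $v$, and $w$ therefore reduces the expression to
\[
\sum R((w_A)_1 \otimes (v_A)_1)\,R((w_A)_2 \otimes (u_A)_1)\,R((v_A)_2 \otimes (u_A)_2)\,z'.
\]

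Third, this is converted to $\theta$ by invoking the OQHYBE \eqref{oqhybe}. Using $\alpha$-invariance and the comultiplicativity $\alpha((x_A)_i) = \alpha(x_A)_i$ in $A$, rewrite the first two $R$-factors as $R(\alpha(w_A)_1 \otimes \alpha((v_A)_1))\,R(\alpha(w_A)_2 \otimes \alpha((u_A)_1))$; the displayed expression then becomes precisely the left-hand side of \eqref{oqhybe} with $(x,y,z) = (\alpha(w_A), v_A, u_A)$. Applying \eqref{oqhybe} turns it into $R((v_A)_1 \otimes (u_A)_1)\,R(\alpha(w_A)_1 \otimes \alpha((u_A)_2))\,R(\alpha(w_A)_2 \otimes \alpha((v_A)_2))$, and one final use of $\alpha$-invariance collapses this to the $R$-coefficient appearing in $\theta$, proving LHS $=\theta$. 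The main obstacle is the bookkeeping: three layers of iterated comodule Sweedler indices must be simultaneously converted into Sweedler decompositions of $\Delta$ applied to $u_A,v_A,w_A$, and the many $\alpha$s have to be shuffled in and out of the $R$-arguments at precisely the right moments, using $\alpha$-invariance and comultiplicativity in tandem, so that OQHYBE is available with its first argument correctly shifted to $\alpha(w_A)$.
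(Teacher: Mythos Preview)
Your proof is correct and follows essentially the same route as the paper's: unwind the three $B$-operators, use comultiplicativity \eqref{comult} and Hom-coassociativity \eqref{homcoass} together with the $\alpha$-invariance of $R$ to reduce to three $R$-factors indexed by Sweedler components of $\Delta(u_A),\Delta(v_A),\Delta(w_A)$, and then apply the OQHYBE \eqref{oqhybe} with $(x,y,z)=(\alpha(w_A),v_A,u_A)$. The only cosmetic difference is that the paper temporarily packages two of the $R$-factors into a single $R(\alpha(w_A)\otimes u_Av_A)$ via axiom \eqref{axiom2} and later expands it back, whereas you keep all three $R$-factors separate throughout; both computations arrive at the same penultimate expression before invoking \eqref{oqhybe}.
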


\begin{proof}
This proof is a long computation, but it is conceptually elementary.  With $z = u \otimes v \otimes w$, we first claim that
\begin{equation}
\label{lhs1}
\begin{split}
(B_{U,W} \otimes \alpha_V) & \circ (\alpha_U \otimes B_{V,W})(z)\\
&= R\left(\alpha(w_A) \otimes u_Av_A\right) \alpha(w_W) \otimes \alpha(u_U) \otimes \alpha(v_V).
\end{split}
\end{equation}
Indeed, we have:
\[
\begin{split}
&(B_{U,W} \otimes \alpha_V) \circ (\alpha_U \otimes B_{V,W})(z)\\
&= R(w_A \otimes v_A)(B_{U,W} \otimes \alpha_V)\left(\alpha(u) \otimes w_W \otimes v_V\right)\\
&= R(\alpha(w_A) \otimes \alpha(v_A))R\left((w_W)_A \otimes \alpha(u)_A\right)(w_W)_W \otimes \alpha(u)_U \otimes \alpha(v_V) \quad\text{by $\alpha$-invariance of $R$}\\
&= R\left((w_A)_1 \otimes \alpha(v_A)\right)R\left((w_A)_2 \otimes \alpha(u_A)\right) \alpha(w_W) \otimes \alpha(u_U) \otimes \alpha(v_V) \quad\text{by \eqref{homcoass} and \eqref{comult}}\\
&= R\left(\alpha(w_A) \otimes u_Av_A\right)\alpha(w_W) \otimes \alpha(u_U) \otimes \alpha(v_V) \quad\text{by \eqref{axiom2}}.
\end{split}
\]
This proves \eqref{lhs1}.

Now we apply $\alpha_W \otimes B_{U,V}$ to \eqref{lhs1}:
\[
\begin{split}
& (\alpha_W \otimes B_{U,V}) \circ (B_{U,W} \otimes \alpha_V) \circ (\alpha_U \otimes B_{V,W})(z)\\
&= R\left(\alpha(w_A) \otimes u_Av_A\right)R\left(\alpha(v_V)_A \otimes \alpha(u_U)_A\right) \alpha^2(w_W) \otimes \alpha(v_V)_V \otimes \alpha(u_U)_U\\
&= R\left(\alpha(w_A) \otimes u_Av_A\right) R\left(\alpha((v_V)_A)\otimes \alpha((u_U)_A)\right) \alpha^2(w_W) \otimes \alpha((v_V)_V) \otimes \alpha((u_U)_U) \quad\text{by \eqref{comult}}\\
&= R\left(\alpha^2(w_A) \otimes \alpha(u_A)\alpha(v_A)\right)R\left((v_V)_A \otimes (u_U)_A\right)\alpha^2(w_W) \otimes \alpha((v_V)_V) \otimes \alpha((u_U)_U)\\
&= R\left(\alpha^2(w_A) \otimes (u_A)_1 (v_A)_1\right)R\left((v_A)_2 \otimes (u_A)_2\right)\alpha^2(w_W) \otimes \alpha^2(v_V) \otimes \alpha^2(u_U) \quad\text{by \eqref{homcoass}}\\
&= R\left(\alpha(w_A)_1 \otimes \alpha((v_A)_1)\right)R\left(\alpha(w_A)_2 \otimes \alpha((u_A)_1)\right)R\left((v_A)_2 \otimes (u_A)_2\right)z'\quad\text{by \eqref{axiom2}}.
\end{split}
\]
In the third equality above, we used the $\alpha$-invariance of $R$ and \eqref{homassaxioms}.  The $3$-tensor $z'$ is defined in \eqref{z'}.  Applying the OQHYBE \eqref{oqhybe} to the previous line, we continue the above computation:
\[
\begin{split}
&= R\left((v_A)_1 \otimes (u_A)_1\right)R\left(\alpha(w_A)_1 \otimes \alpha((u_A)_2)\right)R\left(\alpha(w_A)_2 \otimes \alpha((v_A)_2)\right)z'\\
&= R\left((v_A)_1 \otimes (u_A)_1\right)R\left(\alpha((w_A)_1) \otimes \alpha((u_A)_2)\right)R\left(\alpha((w_A)_2)\otimes \alpha((v_A)_2)\right)z'\quad\text{by \eqref{homcoassaxioms}}\\
&= R\left((v_A)_1 \otimes (u_A)_1\right)R\left((w_A)_1 \otimes (u_A)_2\right)R\left((w_A)_2 \otimes (v_A)_2\right)z'\quad\text{by $\alpha$-invariance}.
\end{split}
\]
The previous line is equal to $\theta$ \eqref{theta}, thereby finishing the proof.
\end{proof}

\begin{lemma}
\label{lem2:comod}
We have
\[
(B_{V,W} \otimes \alpha_U) \circ (\alpha_V \otimes B_{U,W}) \circ (B_{U,V} \otimes \alpha_W)(u \otimes v \otimes w) = \theta,
\]
where $\theta$ is defined in \eqref{theta}.
\end{lemma}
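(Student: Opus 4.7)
The approach parallels that of Lemma~\ref{lem1:comod}: unwind the threefold composition on $u\otimes v\otimes w$ from the innermost operator outward, using the definition \eqref{BVW}, the comultiplicativity axiom \eqref{comult} to push $\alpha$'s through comodule structure maps, $\alpha$-invariance of the Hom-cobraiding form $R$ to redistribute $\alpha$'s between tensor slots, and Hom-coassociativity \eqref{homcoass} to convert double-coaction labels into Sweedler components of $\Delta$.

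Concretely, I would first compute
\[
(B_{U,V}\otimes\alpha_W)(u\otimes v\otimes w)=\sum R(v_A\otimes u_A)\,v_V\otimes u_U\otimes\alpha(w);
\]
then apply $\alpha_V\otimes B_{U,W}$, using \eqref{comult} to rewrite $(\alpha_W(w))_A\otimes(\alpha_W(w))_W$ as $\alpha(w_A)\otimes\alpha(w_W)$, obtaining
\[
\sum R(v_A\otimes u_A)\,R(\alpha(w_A)\otimes(u_U)_A)\,\alpha(v_V)\otimes\alpha(w_W)\otimes(u_U)_U;
\]
and finally apply $B_{V,W}\otimes\alpha_U$, using \eqref{comult} together with $\alpha$-invariance of $R$ on the newly produced factor, to reach the three-factor expression
\[
\sum R(v_A\otimes u_A)\,R(\alpha(w_A)\otimes(u_U)_A)\,R((w_W)_A\otimes(v_V)_A)\cdot\alpha((w_W)_W)\otimes\alpha((v_V)_V)\otimes\alpha((u_U)_U).
\]

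The remaining task is to convert the nested coactions to Sweedler components of $\Delta$. I would first apply $\alpha$-invariance to the first and third $R$-factors so that every occurrence of $u_A$, $(v_V)_A$, and $(w_W)_A$ sits in the desired $\alpha$-enhanced form, and then carry out three independent substitutions via \eqref{homcoass}. For instance, applying $\mathrm{Id}\otimes\mathrm{Id}\otimes\alpha$ to \eqref{homcoass} for $U$ yields
\[
\sum\alpha(u_A)\otimes(u_U)_A\otimes\alpha((u_U)_U)=\sum(u_A)_1\otimes(u_A)_2\otimes\alpha^2(u_U),
\]
which converts the $u$-components; analogous identities obtained by applying $\mathrm{Id}\otimes\alpha\otimes\alpha$ to \eqref{homcoass} for $V$ and for $W$ convert the $v$- and $w$-components. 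A concluding $\alpha$-invariance step on the last surviving $R$-factor then produces the product $R((v_A)_1\otimes(u_A)_1)\,R((w_A)_1\otimes(u_A)_2)\,R((w_A)_2\otimes(v_A)_2)$ acting on $z'=\alpha^2(w_W)\otimes\alpha^2(v_V)\otimes\alpha^2(u_U)$, which is precisely $\theta$.

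The main obstacle will be bookkeeping: three independent comodule decompositions coexist in the intermediate expression, and the $\alpha$-invariance rewrites must be coordinated so that each Hom-coassociativity substitution applies in exactly the right tensor positions. In contrast to Lemma~\ref{lem1:comod}, where axiom \eqref{axiom2} introduces a swapped pairing of $R$-arguments that must then be undone via OQHYBE \eqref{oqhybe}, here the pairings of comultiplicative Sweedler labels already align with those in $\theta$, so I do not expect OQHYBE \eqref{oqhybe2} to be needed. As a symmetric alternative route, one could instead use axiom \eqref{axiom1} at the end of the second step to fuse two $R$-factors into $R(v_Aw_A\otimes\alpha(u_A))$, apply $B_{V,W}\otimes\alpha_U$, and then re-split via \eqref{axiom1}; this variant terminates at the same $\theta$ and would offer a useful sanity check of the bookkeeping.
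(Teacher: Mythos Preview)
Your proposal is correct. Your main route is a genuine alternative to the paper's: the paper, after the first two operators, immediately invokes axiom \eqref{axiom1} to fuse two $R$-factors into $R(v_Aw_A\otimes\alpha(u_A))$, applies the third operator, and then re-splits with \eqref{axiom1} (followed by comultiplicativity of $\alpha$ in $A$ and $\alpha$-invariance). That is precisely the ``symmetric alternative'' you sketch at the end. Your primary argument instead keeps the three $R$-factors separate throughout and reaches $\theta$ using only $\alpha$-invariance of $R$ together with three parallel applications of the comodule Hom-coassociativity \eqref{homcoass}; in particular, you never touch axiom \eqref{axiom1}. Both routes correctly avoid the OQHYBE, in contrast to Lemma~\ref{lem1:comod}. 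The paper's fuse--split route makes the parallelism with the proof of Lemma~\ref{lem1:comod} transparent, while your direct route is arguably more elementary since it uses fewer structural axioms. One minor simplification of your bookkeeping: applying $\alpha$-invariance only to the first $R$-factor already puts all three sets of comodule labels $(u,v,w)$ in the exact form $\alpha(m_A)\otimes(m_M)_A\otimes\alpha((m_M)_M)$ needed for the $\mathrm{Id}\otimes\mathrm{Id}\otimes\alpha$ variant of \eqref{homcoass}, so the extra $\alpha$-invariance on the third factor and the concluding one can be omitted.
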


\begin{proof}
With $z = u \otimes v \otimes w$, first we claim that
\begin{equation}
\label{rhs1}
\begin{split}
(\alpha_V \otimes B_{U,W}) & \circ (B_{U,V} \otimes \alpha_W)(z)\\
&= R\left(v_Aw_A \otimes \alpha(u_A)\right) \alpha(v_V) \otimes \alpha(w_W) \otimes \alpha(u_U).
\end{split}
\end{equation}
Indeed, we have:
\[
\begin{split}
& (\alpha_V \otimes B_{U,W}) \circ (B_{U,V} \otimes \alpha_W)(z)\\
&= R(v_A \otimes u_A)(\alpha_V \otimes B_{U,W})(v_V \otimes u_U \otimes \alpha(w))\\
&= R(\alpha(v_A) \otimes \alpha(u_A))R\left(\alpha(w)_A \otimes (u_U)_A\right)\alpha(v_V) \otimes \alpha(w)_W \otimes (u_U)_U\quad\text{by $\alpha$-invariance}\\
&= R\left(\alpha(v_A) \otimes (u_A)_1\right)R\left(\alpha(w_A) \otimes (u_A)_2\right)\alpha(v_V) \otimes \alpha(w_W) \otimes \alpha(u_U)\quad\text{by \eqref{homcoass} and \eqref{comult}}.
\end{split}
\]
The previous line is equal to the right-hand side in \eqref{rhs1} by \eqref{axiom1}.

Now apply $B_{V,W} \otimes \alpha_U$ to \eqref{rhs1}:
\[
\begin{split}
& (B_{V,W} \otimes \alpha_U) \circ (\alpha_V \otimes B_{U,W}) \circ (B_{U,V} \otimes \alpha_W)(z)\\
&= R\left(v_Aw_A \otimes \alpha(u_A)\right)R\left(\alpha(w_W)_A \otimes \alpha(v_V)_A\right)\alpha(w_W)_W \otimes \alpha(v_V)_V \otimes \alpha^2(u_U)\\
&= R\left(v_Aw_A \otimes \alpha(u_A)\right)R\left(\alpha((w_W)_A) \otimes \alpha((v_V)_A)\right)\alpha((w_W)_W)\otimes \alpha((v_V)_V)\otimes \alpha^2(u_U)\quad\text{by \eqref{comult}}\\
&= R\left(\alpha(v_A)\alpha(w_A) \otimes \alpha^2(u_A)\right)R\left((w_W)_A \otimes (v_V)_A\right)\alpha((w_W)_W)\otimes \alpha((v_V)_V)\otimes \alpha^2(u_U)\\
&= R\left((v_A)_1(w_A)_1 \otimes \alpha^2(u_A)\right)R\left((w_A)_2 \otimes (v_A)_2\right)z' \quad\text{by \eqref{homcoass} and \eqref{z'}}\\
&= R\left(\alpha((v_A)_1) \otimes \alpha(u_A)_1\right)R\left(\alpha((w_A)_1)\otimes \alpha(u_A)_2\right)R\left((w_A)_2 \otimes (v_A)_2\right)z'\quad\text{by \eqref{axiom1}}\\
&= R\left(\alpha((v_A)_1) \otimes \alpha((u_A)_1)\right)R\left(\alpha((w_A)_1)\otimes\alpha((u_A)_2)\right)R\left((w_A)_2 \otimes (v_A)_2\right)z' \quad\text{by \eqref{homcoassaxioms}}\\
&= \theta \quad\text{by $\alpha$-invariance}.
\end{split}
\]
In the third equality above, we used the $\alpha$-invariance of $R$ and \eqref{homassaxioms}.
\end{proof}

This completes the proof of Theorem ~\ref{thm:comod}.

\section{Hom-quantum geometry}
\label{sec:coaction}

The purpose of this section is to discuss Hom-quantum geometry, focusing on Hom-type, non-associative and non-coassociative analogues of the quantum matrices coactions on the quantum planes (Example ~\ref{ex:qsym}).  We use the phrase \emph{Hom-quantum geometry} loosely to refer to Hom-type analogues of quantum group (co)actions on quantum spaces, such as the quantum planes.  The quantum matrices considered in this section are the quantum groups $\mq$ (Example ~\ref{ex:mq}) and $\mqns$ (Example ~\ref{ex:mq'}) encountered in section ~\ref{sec:twist}.  The quantum planes are quantum, non-commutative versions of the affine plane $\bk[x,y]$ (Definition ~\ref{def:qplanes}).

We give a general twisting procedure, similar to Theorem ~\ref{thm:twist}, by which comodule algebras are twisted into their Hom-type analogues (Theorem ~\ref{thm:comodalg}).  Then we use it to
construct multi-parameter, infinite families of non-associative and non-coassociative twistings of the usual $\mq$-comodule algebra structures on the standard and the fermionic quantum planes (Examples ~\ref{ex:shqsym} and ~\ref{ex:fhqsym}).  We also construct an infinite family of Hom-type twistings of the $\mqns$-comodule algebra structure on the mixed quantum plane (Example ~\ref{ex:mhqsym}).


Let us first recall the various quantum planes and the quantum matrices coactions on them.  The reader can consult \cite[IV]{kassel} for the standard quantum plane, \cite[Ch.3]{street} for the standard and the fermionic quantum planes, and \cite[4.5 and 10.2]{majid} for all three versions of the quantum planes.  For the rest of this section, $\bk$ is a field of characteristic $0$ and $q \in \bk$ is an invertible scalar with $q^2 \not= -1$.

\begin{definition}[\textbf{Quantum planes}]
\label{def:qplanes}
Let $x$ and $y$ be non-commuting variables, and let $\bk\{x,y\}$ be the free unital associative $\bk$-algebra generated by $x$ and $y$.
\begin{enumerate}
\item
The \textbf{standard quantum plane} is the $\bk$-algebra
\begin{equation}
\label{qp}
\qp = \frac{\bk\{x,y\}}{(yx - qxy)}.
\end{equation}
It has a $\bk$-linear basis $\{x^iy^j \colon i,j \geq 0\}$.
\item
The \textbf{fermionic quantum plane} is the $\bk$-algebra
\begin{equation}
\label{qf}
\qf = \frac{\bk\{x,y\}}{(x^2,\ y^2,\ xy + qyx)}.
\end{equation}
It has a $\bk$-linear basis $\{1,x,y,xy\}$.
\item
The \textbf{mixed quantum plane} is the $\bk$-algebra
\begin{equation}
\label{qm}
\qm = \frac{\bk\{x,y\}}{(y^2,\ yx - qxy)}.
\end{equation}
It has a $\bk$-linear basis $\{x^iy^\epsilon \colon i \geq 0,\ \epsilon \leq 1\}$.
\end{enumerate}
\end{definition}

In the standard and the mixed quantum planes, the two generators satisfy the quantum commutation relation, $yx = qxy$ \eqref{qcom}.  In particular, if $q = 1$, then the standard quantum plane $\qp$ is exactly the affine plane $\bk[x,y]$.  The fermionic quantum plane $\qf$ is also known as the \emph{quantum super-plane}, as in \cite[Ch.3]{street}.  Note that the mixed quantum plane is a sort of halfway transition between the standard and the fermionic quantum planes.  Indeed, $\qm$ is the quotient of the standard quantum plane $\qp$ by the relation $y^2 = 0$.  Furthermore, the fermionic quantum plane $\qf$ is the quotient of the mixed quantum plane $\qmtwisted$ by the relation $x^2 = 0$.


To describe the quantum matrices coactions on the quantum planes, let us recall the definition of a comodule algebra.  The concept of a comodule algebra is discussed in many books on Hopf algebras and quantum groups, such as \cite[p.138]{abe}, \cite[III.7]{kassel}, and \cite[Ch.4]{mont}.

\begin{definition}
\label{def:comodalg}
Let $H$ be a bialgebra and $A$ be an associative algebra.  An \textbf{$H$-comodule algebra} structure on $A$ consists of an $H$-comodule structure $\rho \colon A \to H \otimes A$ such that $\rho$ is also a morphism of algebras.
\end{definition}

If $\rho \colon A \to H \otimes A$ is an $H$-comodule structure map, we write $\rho(a) = \sum a_H \otimes a_A$ for $a \in A$.  Then $\rho$ is a morphism of algebras if and only if
\begin{equation}
\label{comodalgax}
\rho(ab) = \sum a_Hb_H \otimes a_Ab_A
\end{equation}
for all $a,b \in A$.  Also, $\rho$ is a morphism of algebras if and only if the multiplication $\mu_A$ in $A$ is an $H$-comodule morphism \cite[III.7.1 and III.7.2]{kassel}.

To construct an $H$-comodule algebra structure on an algebra $A$, one can first construct an algebra morphism $\rho \colon A \to H \otimes A$.  Then one shows that $\rho$ is an $H$-comodule structure map, i.e., $(\Delta \otimes Id)\circ \rho = (Id \otimes \rho)\circ\rho$, where $\Delta$ is the comultiplication in $H$.

Now we describe the usual quantum matrices comodule algebra structures on the quantum planes.


\begin{example}[\textbf{Quantum symmetries on the quantum planes}]
\label{ex:qsym}
\begin{enumerate}
\item
Recall the quantum group $\mq$ of quantum matrices from Example ~\ref{ex:mq}.  For $\mq$ we also assume that $q^{\frac{1}{2}}$ exists and is invertible.  There is a $\mq$-comodule algebra structure on the standard quantum plane whose structure map $\rho \colon \qp \to \mq \otimes \qp$ is determined by
\begin{equation}
\label{mqcoaction}
\rho\xymat = \abcd \otimes \xymat.
\end{equation}
In other words, we have
\[
\rho(x) = a \otimes x + b \otimes y \quad
\text{and}\quad
\rho(y) = c \otimes x + d \otimes y.
\]
One checks that $\rho$ defines a morphism of algebras by observing that it preserves the quantum commutation relation,
\[
\rho(y)\rho(x) = q\rho(x)\rho(y),
\]
using \eqref{qcom} and \eqref{six}.  That $\rho$ gives the standard quantum plane $\qp$ the structure of a $\mq$-comodule follows from the definition of the comultiplication $\Delta$ on $\mq$ \eqref{deltaMq}.  Indeed, using the matrix notations above, we have
\begin{equation}
\label{mqcomodule}
\begin{split}
(\Delta \otimes Id)\circ \rho\xymat
&= \abcd \otimes \abcd \otimes \xymat\\
&= (Id \otimes \rho) \circ \rho\xymat.
\end{split}
\end{equation}
This is enough to conclude that $(\Delta \otimes Id)\circ \rho$ and $(Id \otimes \rho) \circ \rho$ are equal because both of them are algebra morphisms.
\item
Similarly, there is a $\mq$-comodule algebra structure on the fermionic quantum plane whose structure map $\rho \colon \qf \to \mq \otimes \qf$ is given by \eqref{mqcoaction} again.  As before, $\rho$ is a morphism of algebras because it preserves the necessary relations, i.e.,
\[
\rho(x)^2 = 0 = \rho(y)^2\quad\text{and}\quad
\rho(x)\rho(y) + q\rho(y)\rho(x) = 0.
\]
That $\rho$ gives the fermionic quantum plane $\qf$ the structure of a $\mq$-comodule follows from \eqref{mqcomodule}.
\item
Recall the non-standard quantum group $\mqns$ from Example ~\ref{ex:mq'}.  There is a $\mqns$-comodule algebra structure on the mixed quantum plane whose structure map $\rho \colon \qm \to \mqns \otimes \qm$ is once again given as in \eqref{mqcoaction}.  The relations
\[
\rho(y)^2 = 0\quad\text{and}\quad\rho(y)\rho(x) = q\rho(x)\rho(y)
\]
are easy to check, so $\rho$ is a morphism of algebras.  One uses \eqref{mqcomodule} to infer that $\rho$ gives the mixed quantum plane $\qm$ the structure of a $\mqns$-comodule.
\qed
\end{enumerate}
\end{example}

In order to study Hom-type, non-(co)associative twistings of these quantum matrices comodule algebra structures on the quantum planes, let us define the Hom version of a comodule algebra.


\begin{definition}
\label{def:cha}
Let $(H,\mu_H,\Delta_H,\alpha_H)$ be a Hom-bialgebra and $(A,\mu_A,\alpha_A)$ be a Hom-associative algebra (Definition ~\ref{def:homas}).  An \textbf{$H$-comodule Hom-algebra} structure on $A$ consists of an $H$-comodule structure $\rho \colon A \to H \otimes A$ on $A$ (Definition ~\ref{def:comodule}) such that \eqref{comodalgax} is satisfied.
\end{definition}

Recall from Definition ~\ref{def:comodule} that an $H$-comodule structure map $\rho$ commutes with $\alpha$.  The condition ~\eqref{comodalgax} still makes sense in the Hom setting, and it says that $\rho$ commutes with the multiplications $\mu$ in the obvious sense.  Therefore, we can equivalently define an $H$-comodule Hom-algebra structure as an $H$-comodule structure $\rho$ that is also a morphism of Hom-associative algebras, i.e., a linear map that commutes with both $\alpha$ and $\mu$.


As in section ~\ref{sec:twist}, we now give a general twisting procedure by which a large family of comodule Hom-algebras can be constructed from any comodule algebra.  This result will be applied to the quantum symmetries on the quantum planes (Example ~\ref{ex:qsym}).

\begin{theorem}
\label{thm:comodalg}
Let $(H,\mu_H,\Delta_H)$ be a bialgebra, and let $(A,\mu_A)$ be an $H$-comodule algebra with structure map $\rho \colon A \to H \otimes A$.  Let $\alpha_H \colon H \to H$ be a bialgebra morphism, and let $\alpha_A \colon A \to A$ be an algebra morphism such that \begin{equation}
\label{eq:rhoalpha}
\rho \circ \alpha_A = (\alpha_H \otimes \alpha_A) \circ \rho.
\end{equation}
Then the Hom-associative algebra $A_\alpha = (A,\mualphaA = \alpha_A \circ \mu_A,\alpha_A)$ is an $H_\alpha$-comodule Hom-algebra with structure map $\rho_\alpha = \rho \circ \alpha_A$, where $H_\alpha$ is the Hom-bialgebra $(H,\mualphaH = \alpha_H \circ \mu_H,\Delta_\alpha = \Delta_H \circ \alpha_H,\alpha_H)$ in Example ~\ref{ex:homas}.
\end{theorem}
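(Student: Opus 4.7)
The plan mirrors that of Theorem~\ref{thm:twist}, adapted to comodules. Since $H_\alpha$ is a Hom-bialgebra and $(A,\mualphaA,\alpha_A)$ is a Hom-associative algebra by Example~\ref{ex:homas}, what remains is to verify that $\rho_\alpha = \rho \circ \alpha_A$ defines an $H_\alpha$-comodule structure on $(A,\alpha_A)$ in the sense of Definition~\ref{def:comodule} and that it is multiplicative in the twisted sense \eqref{comodalgax}. Writing $\rho(a) = \sum a_1 \otimes a_2$ throughout, the key bookkeeping formula, obtained from \eqref{eq:rhoalpha}, is
\[
\rho_\alpha(a) = \rho(\alpha_A(a)) = \sum \alpha_H(a_1) \otimes \alpha_A(a_2).
\]

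First I would check comultiplicativity \eqref{comult}: both sides of $(\alpha_H \otimes \alpha_A) \circ \rho_\alpha = \rho_\alpha \circ \alpha_A$ reduce immediately to $\rho \circ \alpha_A^2$ by a single application of \eqref{eq:rhoalpha}. For Hom-coassociativity \eqref{homcoass}, I would expand both $(\Delta_\alpha \otimes \alpha_A)\circ \rho_\alpha$ and $(\alpha_H \otimes \rho_\alpha)\circ \rho_\alpha$ using the displayed formula above, together with $\Delta_\alpha = \Delta_H \circ \alpha_H$ and the fact that $\alpha_H$ is a coalgebra morphism (so $\Delta_H \circ \alpha_H = \alpha_H^{\otimes 2} \circ \Delta_H$). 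The two sides then become expressible as $\alpha_H^{\otimes 2} \otimes \alpha_A$ applied to $(\Delta_H \otimes Id_A)\circ \rho \circ \alpha_A(a)$ and to $(Id_H \otimes \rho) \circ \rho \circ \alpha_A(a)$ respectively, after which the standard coassociativity of the $H$-comodule $A$ closes the argument.

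Next I would verify the twisted multiplicativity condition
\[
\rho_\alpha(\mualphaA(a,b)) = \sum \mualphaH(\rho_\alpha(a)_1, \rho_\alpha(b)_1) \otimes \mualphaA(\rho_\alpha(a)_2, \rho_\alpha(b)_2).
\]
For the left-hand side, two applications of \eqref{eq:rhoalpha} to $\rho(\alpha_A^2(ab))$ together with the original comodule algebra axiom $\rho(ab) = \sum a_1 b_1 \otimes a_2 b_2$ yield $\sum \alpha_H^2(a_1 b_1) \otimes \alpha_A^2(a_2 b_2)$. For the right-hand side, substituting the displayed formula for $\rho_\alpha$ and using that $\alpha_H$ and $\alpha_A$ are algebra morphisms gives $\sum \alpha_H(\alpha_H(a_1)\alpha_H(b_1)) \otimes \alpha_A(\alpha_A(a_2)\alpha_A(b_2)) = \sum \alpha_H^2(a_1 b_1) \otimes \alpha_A^2(a_2 b_2)$, matching the left-hand side.

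The argument is essentially bookkeeping; there is no conceptual obstacle. The main thing to watch is applying the compatibility \eqref{eq:rhoalpha} in the correct direction at each step and keeping the twisted operations $\mualphaA$, $\mualphaH$, $\Delta_\alpha$, and $\rho_\alpha$ clearly distinguished from their un-twisted counterparts, so that the powers of $\alpha_H$ and $\alpha_A$ on both sides of each identity line up correctly.
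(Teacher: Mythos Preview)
Your proposal is correct and follows essentially the same approach as the paper: verify comultiplicativity, Hom-coassociativity, and the twisted multiplicativity axiom directly, each time reducing to the untwisted axiom via \eqref{eq:rhoalpha} and the (co)algebra morphism properties of $\alpha_H$ and $\alpha_A$. The only difference is presentational: the paper packages the Hom-coassociativity and multiplicativity checks as commutative diagrams, while you do the equivalent element-wise computation in Sweedler notation; the logical content is identical.
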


\begin{proof}
First we show that $\rho_\alpha$ gives $(A,\alpha_A)$ the structure of an $H_\alpha$-comodule (Definition ~\ref{def:comodule}).  That $\rho_\alpha$ is comultiplicative \eqref{comult} follows from \eqref{eq:rhoalpha}.  Next, the Hom-coassociativity \eqref{homcoass} of $\rho_\alpha$ means that
\[
(\alpha_H \otimes \rho_\alpha) \circ \rho_\alpha = (\Delta_\alpha \otimes \alpha_A) \circ \rho_\alpha.
\]
This equality is true by the following commutative diagram:
\[
\SelectTips{cm}{10}
\xymatrix{
A \ar[rr]_-{\alpha_A} \ar[dd]_-{\rho_\alpha}
\ar@{<} `u[rrrr]  `_d[rrrr] ^-{\rho_\alpha} [rrrr]
& & A \ar[rr]_-{\rho} \ar[ddll]_-{\rho} \ar[d]^-{\alpha_A} & & H \otimes A \ar[d]_-{\alpha_H \otimes \alpha_A}
\ar@/^2pc/[dd]^-{\alpha_H \otimes \rho_\alpha}\\
 & & A \ar[rr]^-{\rho} \ar[d]^-{\rho} & & H \otimes A \ar[d]_-{Id_H \otimes \rho}\\
H \otimes A \ar[rr]^-{\alpha_H \otimes \alpha_A} \ar@{<} `d[rrrr]  `[rrrr] _-{\Delta_\alpha \otimes \alpha_A} [rrrr]
& & H \otimes A \ar[rr]^-{\Delta_H \otimes Id_A} & & H^{\otimes 2} \otimes A.
}
\]
The lower-left triangle and the upper-right square are commutative by \eqref{eq:rhoalpha}.  The lower-right square is commutative because $A$ is an $H$-comodule (in the usual sense) by assumption.  We have shown that $\rho_\alpha$ gives $(A,\alpha_A)$ the structure of an $H_\alpha$-comodule.

The condition \eqref{comodalgax} (with $\rho_\alpha = \rho \circ \alpha_A$, $\mualphaA = \alpha_A \circ \mu_A$, and $\mualphaH = \alpha_H \circ \mu_H$) means that
\[
\rho_\alpha \circ \mualphaA = (\mualphaH \otimes \mualphaA) \circ (Id \otimes \tau \otimes Id) \circ \rho_\alpha^{\otimes 2}.
\]
This equality is true by the following commutative diagram, where $\nu = (\mu_H \otimes \mu_A) \circ (Id \otimes \tau \otimes Id)$ and $\eta = (\mualphaH \otimes \mualphaA) \circ (Id \otimes \tau \otimes Id)$:
\[
\SelectTips{cm}{10}
\xymatrix{
A \otimes A \ar[rr]_-{\alpha_A^{\otimes 2}} \ar[d]^-{\mu_A} \ar@/_2pc/[dd]_-{\mualphaA}
\ar `u[rrrr] `_d[rrrr] ^-{\rho_\alpha^{\otimes 2}} [rrrr]
& & A \otimes A \ar[rr]_-{\rho^{\otimes 2}} \ar[d]^-{\mu_A} & & H \otimes A \otimes H \otimes A \ar[d]_-{\nu} \ar@/^2pc/[dd]^-{\eta}\\
A \ar[rr]^-{\alpha_A} \ar[d]^-{\alpha_A} & & A \ar[rr]^-{\rho} \ar[d]^-{\alpha_A} & & H \otimes A \ar[d]_-{\alpha_H \otimes \alpha_A}\\
A \ar[rr]^-{\alpha_A} \ar@{<} `d[rrrr]  `[rrrr] _-{\rho_\alpha} [rrrr]
& & A \ar[rr]^-{\rho} & & H \otimes A.
}
\]
The upper-left square is commutative because $\alpha_A$ is a morphism of algebras.  The lower-right square is commutative by \eqref{eq:rhoalpha}.  The upper-right square is commutative because $\rho \colon A \to H \otimes A$ is a morphism of algebras \eqref{comodalgax}.
\end{proof}


We will use Theorem ~\ref{thm:comodalg} when $\rho \colon A \to H \otimes A$ is one of the three types of quantum symmetries on the quantum planes.  So we need suitable bialgebra morphisms on the quantum groups $\mq$ and $\mqns$ and algebra morphisms on the quantum planes.  We use the bialgebra morphisms $\alpha$ on $\mq$ and $\mqns$ defined as in \eqref{alphamq}.  Let us now describe the algebra morphisms on the quantum planes that are suitable for applying Theorem ~\ref{thm:comodalg}.

\begin{example}[\textbf{Hom-quantum planes}]
\label{ex:hqp}
Fix two scalars $\xi$ and $\lambda$ in $\bk$ with $\lambda$ invertible.
\begin{enumerate}
\item
There is an algebra morphism $\alpha \colon \qp \to \qp$ on the standard quantum plane \eqref{qp} determined by
\begin{equation}
\label{alphaqp}
\alpha\xymat = \alphaxy = \twistedxy.
\end{equation}
Indeed, $\alpha$ is a well-defined algebra morphism because it preserves the quantum commutation relation, i.e.,
\[
\begin{split}
\alpha(y)\alpha(x) &= (\lambda^{-1}\xi y)(\xi x)\\
&= \lambda^{-1}\xi^2 qxy\\
&= q\alpha(x)\alpha(y).
\end{split}
\]
The corresponding Hom-associative algebra (Example ~\ref{ex:homas})
\begin{equation}
\label{shqp}
\qpalpha = (\qp,\mu_\alpha = \alpha \circ \mu,\alpha)
\end{equation}
is called a \textbf{standard Hom-quantum plane}.  Since $\alpha$ depends on $\xi$ and $\lambda \not= 0$, we can think of $\qpalpha$ as a $2$-parameter, infinite family of Hom-associative algebra twistings of the standard quantum plane.  With respect to the $\bk$-linear basis $\{x^iy^j\}$ of $\qp$, the twisted multiplication $\mu_\alpha$ is given by
\begin{equation}
\label{hqpmu}
\mu_\alpha(x^iy^j, x^ky^l) = q^{jk}\lambda^{-(j+l)}\xi^{i+j+k+l} x^{i+k}y^{j+l}.
\end{equation}
Indeed, we have
\[
\begin{split}
\mu_\alpha(x^iy^j, x^ky^l)
&= \alpha((x^iy^j)(x^ky^l))\\
&= \alpha(q^{jk}x^{i+k}y^{j+l})\\
&= q^{jk}(\xi^{i+k}x^{i+k})(\lambda^{-(j+l)}\xi^{j+l}y^{j+l}).
\end{split}
\]
The definitions of $\alpha(x)$ and $\alpha(y)$ as in \eqref{alphaqp} are made so that the computation \eqref{rhoalphacom} below can go through.
\item
The formula \eqref{alphaqp} also defines an algebra morphism $\alpha \colon \qf \to \qf$ on the fermionic quantum plane \eqref{qf} because it preserves the three defining relations in $\qf$.  The corresponding Hom-associative algebra (Example ~\ref{ex:homas})
\begin{equation}
\label{fhqp}
\qfalpha = (\qf,\mu_\alpha = \alpha \circ \mu,\alpha)
\end{equation}
is called a \textbf{fermionic Hom-quantum plane}.  As above, we think of $\qfalpha$ as a $2$-parameter, infinite family of Hom-associative algebra twistings of the fermionic quantum plane.  With respect to the $\bk$-linear basis $\{1,x,y,xy\}$, the twisted operation $\mu_\alpha$ is given by the following multiplication table for $\mu_\alpha(a,b)$:
\begin{center}
\begin{tabular}{c|cccc}
$a \diagdown b$ & $1$ & $x$ & $y$ & $xy$ \\ \hline
$1$ & $1$ & $\xi x$ & $\lambda^{-1}\xi y$ & $\lambda^{-1}\xi^2 xy$\\
$x$ & $\xi x$ & $0$ & $\lambda^{-1}\xi^2xy$ & $0$ \\
$y$ & $\lambda^{-1}\xi y$ & $-q^{-1}\lambda^{-1}\xi^2xy$ & $0$ & $0$ \\
$xy$ & $\lambda^{-1}\xi^2xy$ & $0$ & $0$ & $0$
\end{tabular}
\end{center}
\item
Similarly, the formula \eqref{alphaqp} defines an algebra morphism $\alpha \colon \qm \to \qm$ on the mixed quantum plane \eqref{qm} because it preserves the two defining relations in $\qm$.  The corresponding Hom-associative algebra (Example ~\ref{ex:homas})
\begin{equation}
\label{mhqp}
\qmalpha = (\qm,\mu_\alpha = \alpha \circ \mu,\alpha)
\end{equation}
is called a \textbf{mixed Hom-quantum plane}.  As above, we think of $\qmalpha$ as a $2$-parameter, infinite family of Hom-associative algebra twistings of the mixed quantum plane.  The twisted multiplication $\mu_\alpha$ is given by the formula \eqref{hqpmu} (with $j,l \leq 1$) because $x$ and $y$ in $\qm$ also satisfy the quantum commutation relation, $yx = qxy$.
\qed
\end{enumerate}
\end{example}


We are now ready to describe Hom-quantum symmetries on the Hom-quantum planes.  Let us first consider the standard Hom-quantum planes \eqref{shqp}.

\begin{example}[\textbf{Hom-quantum symmetries on the standard Hom-quantum planes}]
\label{ex:shqsym}
Fix two scalars $\xi$ and $\lambda$ in $\bk$ with $\lambda$ invertible. Consider the standard Hom-quantum plane $\qpalpha$ \eqref{shqp} with $\alpha$ given as in \eqref{alphaqp}.  Recall the cobraided Hom-bialgebra $\mqalpha$ \eqref{mqalpha}, where $\alpha \colon \mq \to \mq$ is the bialgebra morphism defined in \eqref{alphamq}:
\[
\alpha\abcd = \abcdlambda.
\]
Also recall the $\mq$-comodule algebra structure $\rho \colon \qp \to \mq \otimes \qp$ on $\qp$ defined in \eqref{mqcoaction}:
\[
\rho\xymat = \abcd \otimes \xymat.
\]
We claim that Theorem ~\ref{thm:comodalg} applies in this situation, i.e., \eqref{eq:rhoalpha} holds.

Since both $\rho \circ \alpha$ and $(\alpha \otimes \alpha) \circ \rho$ are algebra morphisms, to check that they are equal it suffices to consider the algebra generators $x$ and $y$ in $\qp$.  We have
\begin{equation}
\label{rhoalphacom}
\begin{split}
\rho \circ \alpha\xymat
&= \begin{pmatrix} \xi\rho(x)\\ \lambda^{-1}\xi\rho(y)\end{pmatrix}\\
&= \begin{pmatrix} \xi(a \otimes x + b \otimes y)\\ \lambda^{-1}\xi(c \otimes x + d \otimes y)\end{pmatrix}\\
&= \abcdlambda \otimes \twistedxy\\
&= \alpha\abcd \otimes \alpha\xymat\\
&= (\alpha \otimes \alpha)\circ \rho\xymat.
\end{split}
\end{equation}
Therefore, by Theorem ~\ref{thm:comodalg}, the standard Hom-quantum plane $\qpalpha$ is a $\mqalpha$-comodule Hom-algebra with structure map $\rho_\alpha = \rho \circ \alpha \colon \qp \to \mq \otimes \qp$.  This is a $2$-parameter ($\xi$ and $\lambda \not= 0$) family of comodule Hom-algebra twistings of the usual $\mq$ quantum symmetry \eqref{mqcoaction} on the standard quantum plane $\qp$.

We now compute the twisted $\mqalpha$-coaction $\rho_\alpha = \rho \circ \alpha$ on $\qpalpha$ with respect to the $\bk$-linear basis $\{x^iy^j \colon i,j \geq 0\}$ of $\qp$.  By the computation \eqref{rhoalphacom}, we have
\begin{equation}
\label{rhoalphax}
\rho_\alpha\xymat = \abcdlambda \otimes \twistedxy.
\end{equation}
To compute $\rho_\alpha$ on the rest of $\qpalpha$, we use the $q$-symbols:
\begin{equation}
\label{qsymbols}
\begin{split}
(n)_{q^2} &= 1 + q^2 + q^4 + \cdots + q^{2(n-1)},\quad (m)_{q^2}! = (m)_{q^2}(m-1)_{q^2} \cdots (1)_{q^2},\quad (0)_{q^2}! = 1,\\
\binom{n}{r}_{q^2} &= \frac{(n)_{q^2}!}{(r)_{q^2}!(n-r)_{q^2}!}.
\end{split}
\end{equation}
It can be checked by direct computations \cite[IV.2.2 and IV.7.2]{kassel} that the quantum commutation relation \eqref{qcom} and the relations \eqref{six} in $\mq$ imply
\begin{equation}
\label{rhoxy}
\rho(x^iy^j) = \sum_{\substack{0\leq r \leq i\\0 \leq s \leq j}} q^{(i-r)s}\binom{i}{r}_{q^2} \binom{j}{s}_{q^2} a^rb^{i-r}c^sd^{j-s} \otimes x^{r+s}y^{i+j-r-s}.
\end{equation}
We have $\rho_\alpha = \rho \circ \alpha$, and
\begin{equation}
\label{alphaxiyj}
\alpha(x^iy^j) = (\xi x)^i(\lambda^{-1}\xi y)^j = \lambda^{-j}\xi^{i+j}x^iy^j.
\end{equation}
Combining \eqref{rhoxy} and \eqref{alphaxiyj}, we obtain
\begin{equation}
\label{rhoalphaxy}
\begin{split}
\rho_\alpha(x^iy^j)
&= \rho(\alpha(x^iy^j))\\
&= \lambda^{-j}\xi^{i+j}\rho(x^iy^j)\\
&= \sum_{\substack{0\leq r \leq i\\0 \leq s \leq j}} q^{(i-r)s}\lambda^{-j}\xi^{i+j} \binom{i}{r}_{q^2} \binom{j}{s}_{q^2} a^rb^{i-r}c^sd^{j-s} \otimes x^{r+s}y^{i+j-r-s}.
\end{split}
\end{equation}

Finally, as discussed in Example ~\ref{ex:mq}, the cobraiding form $R$ \eqref{mqR} on $\mq$ is fixed by $\alpha$ (i.e., $R = R \circ \alpha^{\otimes 2}$) regardless of what $\lambda \not= 0$ is.  Since we have $\rho \circ \alpha = (\alpha \otimes \alpha) \circ \rho$, we can apply Corollary ~\ref{cor2:comod} to obtain a solution $B_\alpha$ \eqref{Balphacomod} of the HYBE for $(\qp,\alpha)$.
\qed
\end{example}

\begin{example}[\textbf{Hom-quantum symmetries on the fermionic Hom-quantum planes}]
\label{ex:fhqsym}
Consider the $\mq$-comodule algebra structure $\rho \colon \qf \to \mq \otimes \qf$ \eqref{mqcoaction} on the fermionic quantum plane $\qf$ \eqref{qf} (Example ~\ref{ex:qsym}).

Fix two scalars $\xi$ and $\lambda$ in $\bk$ with $\lambda$ invertible. Recall from Example ~\ref{ex:hqp} that the formula \eqref{alphaqp} also defines an algebra morphism $\alpha \colon \qf \to \qf$.  With $\alpha \colon \mq \to \mq$ as in \eqref{alphamq}, the computation \eqref{rhoalphacom} implies that $\rho \circ \alpha = (\alpha \otimes \alpha)\circ \rho$.  Therefore, by Theorem ~\ref{thm:comodalg}, the fermionic Hom-quantum plane $\qfalpha$ \eqref{fhqp} is a $\mqalpha$-comodule Hom-algebra with structure map $\rho_\alpha = \rho \circ \alpha \colon \qf \to \mq \otimes \qf$.  This is a $2$-parameter ($\xi$ and $\lambda \not= 0$) family of comodule Hom-algebra twistings of the usual $\mq$ quantum symmetry on the fermionic quantum plane $\qf$.

On the algebra generators $x$ and $y$, the twisted $\mqalpha$-coaction $\rho_\alpha$ is given as in \eqref{rhoalphax}.  Since $\qf$ has a $\bk$-linear basis $\{1,x,y,xy\}$, in order to compute $\rho_\alpha$ completely, it remains to compute $\rho_\alpha(xy)$.  We claim that
\begin{equation}
\label{rhoalphaqf}
\rho_\alpha(xy) = \lambda^{-1}\xi^2 \detq \otimes xy,
\end{equation}
where $\detq = ad - q^{-1}bc$ is the quantum determinant \eqref{detq}.  Since $\alpha(xy) = \lambda^{-1}\xi^2xy$ (by \eqref{alphaxiyj} with $i = j = 1$), to prove \eqref{rhoalphaqf} it suffices to show
\[
\rho(xy) = \detq \otimes xy.
\]
We prove this in the following computation, using the relations $x^2 = y^2 = xy + qyx = 0$ in $\qf$:
\[
\begin{split}
\rho(xy) &= \rho(x)\rho(y) = (a \otimes x + b \otimes y)(c \otimes x + d \otimes y)\\
&= ad \otimes xy + bc \otimes yx\\
&= ad \otimes xy + bc \otimes (-q^{-1}xy)\\
&= (ad - q^{-1}bc) \otimes xy.
\end{split}
\]

Finally, we can apply Corollary ~\ref{cor2:comod} to obtain a solution $B_\alpha$ \eqref{Balphacomod} of the HYBE for $(\qf,\alpha)$.
\qed
\end{example}

\begin{example}[\textbf{Hom-quantum symmetries on the mixed Hom-quantum planes}]
\label{ex:mhqsym}
Consider the mixed quantum plane $\qm$ \eqref{qm} and the non-standard quantum group $\mqns$ (Example ~\ref{ex:mq'}).  Recall from Example ~\ref{ex:qsym} that the formula \eqref{mqcoaction} also defines a $\mqns$-comodule algebra structure $\rho \colon \qm \to \mqns \otimes \qm$ on $\qm$.

Fix two scalars $\xi$ and $\lambda$ in $\bk$ with $\lambda$ invertible.
With the bialgebra morphism $\alpha \colon \mqns \to \mqns$ \eqref{alphamq} and the algebra morphism $\alpha \colon \qm \to \qm$ \eqref{alphaqp}, Theorem ~\ref{thm:comodalg} applies by the computation \eqref{rhoalphacom}.  Thus, the mixed Hom-quantum plane $\qmalpha$ \eqref{mhqp} is a $\mqnsalpha$-comodule Hom-algebra with structure map $\rho_\alpha = \rho \circ \alpha$.  This is a $2$-parameter ($\xi$ and $\lambda \not= 0$) family of comodule Hom-algebra twistings of the usual $\mqns$ quantum symmetry on the mixed quantum plane $\qm$.  We can also apply Corollary ~\ref{cor2:comod} to obtain a solution $B_\alpha$ \eqref{Balphacomod} of the HYBE for $(\qm,\alpha)$.

Let us compute the twisted $\mqnsalpha$-coaction $\rho_\alpha = \rho \circ \alpha$ on $\qmalpha$ with respect to the $\bk$-linear basis $\{x^i,x^iy \colon i \geq 0\}$ of $\qm$.  We claim that
\begin{equation}
\label{rhoalphaxiy}
\begin{split}
\rho_\alpha(x^i) &= \xi^i\left\{a^i \otimes x^i + (i)_{q^2} a^{i-1}b \otimes x^{i-1}y\right\},\\
\rho_\alpha(x^iy) &= \lambda^{-1}\xi^{i+1}\left\{a^ic \otimes x^{i+1} +  (q(i)_{q^2} a^{i-1}bc + a^id) \otimes x^iy\right\},
\end{split}
\end{equation}
where $(i)_{q^2}$ is defined in \eqref{qsymbols}.  Since $\rho_\alpha = \rho \circ \alpha$, $\alpha(x^i) = \xi^ix^i$, and $\alpha(x^iy) = \lambda^{-1}\xi^{i+1}x^iy$ (by \eqref{alphaxiyj} with $j = 0,1$), to prove \eqref{rhoalphaxiy} it suffices to prove
\begin{equation}
\label{rhoxiy}
\begin{split}
\rho(x^i) &= a^i \otimes x^i + (i)_{q^2} a^{i-1}b \otimes x^{i-1}y,\\
\rho(x^iy) &= a^ic \otimes x^{i+1} +  (q(i)_{q^2} a^{i-1}bc + a^id) \otimes x^iy.
\end{split}
\end{equation}
To prove the first equality in \eqref{rhoxiy}, we compute as follows:
\[
\begin{split}
\rho(x^i) &= (a \otimes x + b \otimes y)^i\\
&= a^i \otimes x^i + (b \otimes y)(a^{i-1} \otimes x^{i-1}) + (a \otimes x)(b \otimes y)(a^{i-2} \otimes x^{i-2})\\
& \relphantom{} + \cdots + (a^{i-1} \otimes x^{i-1})(b \otimes y)\\
&= a^i \otimes x^i + (q^{2(i-1)} + q^{2(i-2)} + \cdots + q^2 + 1)a^{i-1}b \otimes x^{i-1}y\\
&= a^i \otimes x^i + (i)_{q^2} a^{i-1}b \otimes x^{i-1}y.
\end{split}
\]
In the second equality above, we used the relations $y^2 = 0$ and $yx = qxy$ in $\qm$.  In the third equality, we used the relations $ba = qab$ and $yx = qxy$.

Likewise, to prove the second equality in \eqref{rhoxiy}, we compute as follows:
\[
\begin{split}
\rho(x^iy)
&= \rho(x^i)\rho(y)\\
&= \left(a^i \otimes x^i + (i)_{q^2} a^{i-1}b \otimes x^{i-1}y\right)\left(c \otimes x + d \otimes y\right)\\
&= a^ic \otimes x^{i+1} + (i)_{q^2}a^{i-1}bc \otimes x^{i-1}yx + a^id \otimes x^iy.
\end{split}
\]
Now we obtain the second equality in \eqref{rhoxiy} by applying the relation $yx = qxy$ to the previous line.
\qed
\end{example}



\begin{thebibliography}{AA}
\bibitem{abe}
E. Abe, Hopf algebras, Cambridge Tracts in Math. 74, Cambridge Univ. Press, Cambridge, UK, 1977.

\bibitem{ama}
F. Ammar and A. Makhlouf, Hom-Lie superalgebras and Hom-Lie admissible superalgebras, arXiv:0906.1668v1.

\bibitem{artin2}
E. Artin, Theorie der Z\"{o}pfe, Abh. Math. Sem. Univ. Hamburg 4 (1925) 47-72.

\bibitem{artin}
E. Artin, Theory of braids, Ann. Math. 48 (1947) 101-126.

\bibitem{ams}
H. Ataguema, A. Makhlouf, and S. Silvestrov, Generalization of $n$-ary Nambu algebras and beyond, arXiv:0812.4058v1.


\bibitem{baxter}
R.J. Baxter, Partition function for the eight-vertex lattice model, Ann. Physics 70 (1972) 193-228.

\bibitem{baxter2}
R.J. Baxter, Exactly solved models in statistical mechanics, Academic Press, London, 1982.





\bibitem{cg}
S. Caenepeel and I. Goyvaerts, Hom-Hopf algebras, arXiv:0907.0187.

\bibitem{cp}
V. Chari and A.N. Pressley, A guide to quantum groups, Cambridge Univ. Press, Cambridge, 1994.

\bibitem{dmmz}
E.E. Demidov, Yu.I. Manin, E.E. Mukhin, and D.V. Zhdanovich, Non-standard quantum deformations of $GL(n)$ and constant solutions of the Yang-Baxter equations, Prog. Theo. Phys. Suppl. 102 (1990) 203-218.

\bibitem{dri83}
V.G. Drinfel'd, Hamiltonian structures on Lie groups, Lie bialgebras and the geometric meaning of the classical Yang-Baxter equations, Sov. Math. Dokl. 268 (1983) 285-287.

\bibitem{dri85}
V.G. Drinfel'd, Hopf algebras and the quantum Yang-Baxter equation, Soviet Math. Dokl. 32 (1985) 254-258.

\bibitem{dri87}
V.G. Drinfel'd, Quantum groups, in: Proc. ICM (Berkeley, 1986), p.798-820, AMS, Providence, RI, 1987.


\bibitem{es}
P. Etingof and O. Schiffmann, Lectures on quantum groups, 2nd ed., Int. Press of Boston, Cambridge, 2002.






\bibitem{fg}
Y. Fr\'{e}gier and A. Gohr, On Hom type algebras, arXiv:0903.3393v1.

\bibitem{fg2}
Y. Fr\'{e}gier and A. Gohr, On unitality conditions for hom-associative algebras, arXiv:0904.4874v1.

\bibitem{gohr}
A. Gohr, On hom-algebras with surjective twisting, arXiv:0906.3270v2.

\bibitem{hls}
J.T. Hartwig, D. Larsson, and S.D. Silvestrov, Deformations of Lie algebras using $\sigma$-derivations, J. Algebra 295 (2006) 314-361.

\bibitem{hay}
T. Hayashi, Quantum groups and quantum determinants, J. Algebra 152 (1992) 146-165.

\bibitem{hu}
N. Hu, $q$-Witt algebras, $q$-Lie algebras, $q$-holomorph structure and representations, Alg. Colloq. 6 (1999) 51-70.





\bibitem{jl}
Q. Jin and X. Li, Hom-Lie algebra structures on semi-simple Lie algebras, J. Algebra 319 (2008) 1398-1408.

\bibitem{gjw}
N. Jing, M.-L. Ge, and Y.-S. Wu, A new quantum group associated with ``non-standard" braid group representation, Lett. Math. Phys. 21 (1991) 193-203.


\bibitem{kassel}
C. Kassel, Quantum groups, Grad. Texts in Math. 155, Springer-Verlag, New York, 1995.


\bibitem{kr}
P.P. Kulish and N.Yu. Reshetikhin, Quantum linear problem for the sine-Gordon equation and higher representations, J. Soviet Math. 23 (1983) 2435-2441.

\bibitem{lt}
R.G. Larson and J. Towber, Two dual classes of bialgebras related to the concepts of ``quantum group" and ``quantum Lie algebra", Comm. Algebra 19 (1991) 3295-3345.

\bibitem{larsson}
D. Larsson, Global and arithmetic Hom-Lie algebras, Uppsala Univ. UUDM Report 2008:44, http://www.math.uu.se/research/pub/preprints.php.

\bibitem{ls}
D. Larsson and S.D. Silvestrov, Quasi-hom-Lie algebras, central extensions and $2$-cocycle-like identities, J. Algebra 288 (2005) 321-344.

\bibitem{ls2}
D. Larsson and S.D. Silvestrov, Quasi-Lie algebras, Contemp. Math. 391 (2005) 241-248.

\bibitem{ls3}
D. Larsson and S.D. Silvestrov, Quasi-deformations of $\mathfrak{sl}_2(\mathbb{F})$ using twisted derivations, Comm. Algebra 35 (2007) 4303-4318.


\bibitem{liu}
K. Liu, Characterizations of quantum Witt algebra, Lett. Math. Phy. 24 (1992) 257-265.

\bibitem{lm}
V.V. Lyubashenko and S. Majid, Fourier transform identities in quantum mechanics and the quantum line, Phys. Lett. B 284 (1992) 66-70.

\bibitem{majid91}
S. Majid, Representations, duals and quantum doubles of monoidal categories, Rend. Circ. Math. Palermo (2) Suppl. 26 (1991) 3061-3073.

\bibitem{majid92}
S. Majid, Anyonic quantum groups, in: Z. Oziewicz, et al. ed., Spinors, twistors, Clifford algebras and quantum deformations (Proc. 2nd Max Born Symp., Wroclaw, 1992), pp.327-336, Kluwer.

\bibitem{majid}
S. Majid, Foundations of quantum group theory, Cambridge U. Press, Cambridge, UK, 1995.

\bibitem{ms}
A. Makhlouf and S. Silvestrov, Hom-algebra structures, J. Gen. Lie Theory Appl. 2 (2008) 51-64.

\bibitem{ms2}
A. Makhlouf and S. Silvestrov, Hom-Lie admissible Hom-coalgebras and Hom-Hopf algebras, in: S. Silvestrov et. al. eds., Gen. Lie theory in Math., Phys. and Beyond, Ch. 17, pp. 189-206, Springer-Verlag, Berlin, 2009.
 
\bibitem{ms3}
A. Makhlouf and S. Silvestrov, Notes on formal deformations of Hom-associative and Hom-Lie algebras, to appear in Forum Math., arXiv:0712.3130v1.

\bibitem{ms4}
A. Makhlouf and S. Silvestrov, Hom-algebras and Hom-coalgebras, arXiv:0811.0400v2; Preprints in Math. Sci., Lund Univ., Center for Math. Sci., 2008.


\bibitem{mont}
S. Montgomery, Hopf algebras and their actions on rings, CBMS Regional Conf. Ser. in Math. 82, Amer. Math. Soc., Providence, R.I, 1993.


\bibitem{perk}
J.H.H. Perk and H. Au-Yang, Yang-Baxter equations, arXiv:math-ph/0606053v1.

\bibitem{rft}
N.Yu. Reshetikhin, L.A. Takhtadjian, and L.D. Faddeev, Quantization of Lie groups and Lie algebras, Leningrad Math. J. 1 (1990) 193-225.

\bibitem{rs}
L. Richard and S. Silvestrov, A note on quasi-Lie and Hom-Lie structures of $\sigma$-derivations of $\mathbb{C}[z_1^{\pm 1}, \ldots , z_n^{\pm 1}]$, in: S. Silvestrov et. al. eds., Gen. Lie theory in Math., Phys. and Beyond, Ch. 22, pp. 257-262, Springer-Verlag, Berlin, 2009.


\bibitem{sch}
P. Schauenburg, On coquasitriangular Hopf algebras and the quantum Yang-Baxter equation, Algebra Berichte 67, Verlag Reinhard Fischer, M\"{u}nchen, 1992.

\bibitem{ss}
G. Sigurdsson and S. Silvestrov, Lie color and Hom-Lie algebras of Witt type and their central extensions, in: S. Silvestrov et. al. eds., Gen. Lie theory in Math., Phys. and Beyond, Ch. 21, pp. 247-255, Springer-Verlag, Berlin, 2009.

\bibitem{skl1}
E.K. Sklyanin, On complete integrability of the Landau-Lifshitz equation, LOMI preprint E-3-1979, Leningrad, 1979.

\bibitem{skl2}
E.K. Sklyanin, The quantum version of the inverse scattering method, Zap. Nauchn. Sem. LOMI 95 (1980) 55-128.

\bibitem{skl3}
E.K. Sklyanin, On an algebra generated by quadratic relations, Uspekhi Mat. Nauk 40:2 (242) (1985) 214.

\bibitem{street}
R. Street, Quantum groups: a path to current algebra, Australian Math. Soc. Lecture Series 19, Cambridge Univ. Press, Cambridge, 2007.

\bibitem{sweedler}
M.E. Sweedler, Hopf algebras, Benjamin, New York, 1969.

\bibitem{yang}
C.N. Yang, Some exact results for the many-body problem in one dimension with replusive delta-function interaction, Phys. Rev. Lett. 19 (1967) 1312-1315.

\bibitem{yau}
D. Yau, Enveloping algebras of Hom-Lie algebras, J. Gen. Lie Theory Appl. 2 (2008) 95-108.

\bibitem{yau2}
D. Yau, Hom-algebras and homology, arXiv:0712.3515v2.

\bibitem{yau3}
D. Yau, Hom-bialgebras and comodule algebras, arXiv:0810.4866.

\bibitem{yau4}
D. Yau, Module Hom-algebras, arXiv:0812.4695v1.

\bibitem{yau5}
D. Yau, The Hom-Yang-Baxter equation, Hom-Lie algebras, and quasi-triangular bialgebras, J. Phys. A 42 (2009) 165202 (12pp).

\bibitem{yau6}
D. Yau, The Hom-Yang-Baxter equation and Hom-Lie algebras, arXiv:0905.1887.

\bibitem{yau7}
D. Yau, The classical Hom-Yang-Baxter equation and Hom-Lie bialgebras, arXiv:0905.1890.

\bibitem{yau8}
D. Yau, Hom-quantum groups I: quasi-triangular Hom-bialgebras, 	 arXiv:0906.4128v1.

\end{thebibliography}
\end{document}